\documentclass{article}

\usepackage[T2A]{fontenc}
\usepackage[utf8]{inputenc}
\usepackage[english]{babel}
\usepackage{geometry}
\usepackage{indentfirst}
\usepackage[linktoc=all]{hyperref}
\usepackage{tikz}
\usepackage{amsmath}
\usepackage{amssymb}
\usepackage{mathrsfs}
\usepackage{amsthm}
\usepackage{float}
\usepackage{tikz,listofitems}
\usepackage{enumitem}
\usepackage{scalerel,stackengine}
\usepackage{pict2e}
\usepackage{pdfpages}
\usepackage{natbib}
\usepackage{tabularx}
\theoremstyle{plain}
\newtheorem{mythm}{Theorem}[section]
\newtheorem{myprop}[mythm]{Proposition}
\newtheorem{mylemma}[mythm]{Lemma}
\newtheorem{mycor}[mythm]{Corollary}
\newtheorem{mydef}[mythm]{Definition}
\newtheorem{myprob}[mythm]{Problem}
\newtheorem{myrem}[mythm]{Remark}
\newtheorem{mycon}[mythm]{Construction}
\newtheorem{myex}[mythm]{Example}

\newcolumntype{M}{>{\centering\arraybackslash}m{0.5cm}}
\newcommand\tikzmark[2]{%
\tikz[remember picture,baseline] \node[inner sep=2pt,outer sep=0] (#1){#2};%
}

\newcommand\link[2]{%
\begin{tikzpicture}[remember picture, overlay, >=stealth, shift={(0,0)}]
  \draw[->] (#1) to (#2);
\end{tikzpicture}%
}

\title{Universal Fibonacci sequences and UFS-groupoids}
\title{\LARGE \textbf{Universal Fibonacci sequences and UFS-groupoids}}
\author{
\large \textbf{Petr Klimov} \\
\normalsize \textbf{Moscow Institute of Physics and Technology} \\ 
\normalsize  \texttt{email: peterklimov@yandex.ru}
}
\date{}

\begin{document}

\maketitle

\begin{abstract}
In a binary groupoid $(G, *)$, a Fibonacci sequence is a recurrent sequence defined by  $f_1 = a, f_2 = b, \ldots, f_n = f_{n - 2} * f_{n - 1}$.
A universal Fibonacci sequence (UFS) is a singly or doubly infinite sequence whose set of suffixes coincides precisely with the set of all Fibonacci sequences in the groupoid.

This paper studies UFS-groupoids, i.e., groupoids that admit a universal Fibonacci sequence. It is shown that every nontrivial UFS-groupoid is at most countable, locally cyclic, and non-power-associative; that the right cancellation property and the right quasigroup property hold for all pairs of elements except possibly one and two, respectively; that no neutral element or zero element exists; and that there is at most one idempotent element. It is proved that any UFS-groupoid whose universal Fibonacci sequence is not doubly infinite strictly preperiodic is cyclic. It has also been proved that the class of UFS-groupoids is closed under taking subgroupoids and homomorphic images, but is not closed under finite direct products. The structure of subgroupoids of UFS-groupoids is described.

A complete classification of UFS-groupoids is given in terms of the cardinality of $G$ and the periodicity of the universal Fibonacci sequences. Finite UFS-groupoids are described combinatorially via de Bruijn sequences. The number of distinct UFS-groupoids on a finite set is determined, and explicit constructions are provided for both finite and infinite cases across all periodicity classes, including embeddings of UFS-groupoids as subgroupoids into other UFS-groupoids and infinitely generated UFS-groupoids.

\end{abstract}

\tableofcontents

\section{Introduction}

Fibonacci sequences in arbitrary binary groupoids (magmas) $(G, *)$ provide perhaps the most natural example of nontrivial recurrent sequences. Such a sequence is specified by the recursion $$f_1 = a, f_2 = b, \ldots, f_n = f_{n - 2} * f_{n - 1},$$ where $*$ denotes the binary operation of the groupoid. A systematic investigation of these sequences was initiated by Donald Dines Wall in his 1960 paper \cite{wall1960fibonacci}, which focused on Fibonacci-type recurrences in the cyclic groups $(\mathbb{Z}_n, +)$. Several particular instances, however, had been examined much earlier. Notably, already in 1774 Joseph-Louis Lagrange studied the periods $\pi(n)$ of the classical integer Fibonacci sequence $$F_1 = 0, F_2 = 1, \ldots, F_n = F_{n - 2} + F_{n - 1},$$ when taken modulo $n$; these periods are now known as the \textbf{Pisano periods} \cite{fulton1969arithmetical}.

Since then, Fibonacci sequences, together with their properties and applications, have been studied intensively across a wide range of algebraic structures, including groups \cite{knox1992fibonacci, ozkan2003applications, dikici1995recurrences, j1986fibonacci, dikici2003application, karaduman2003period, akuzum2020hadamard}, quasigroups \cite{MarkovBase, MarkovAdd, Markov42}, semigroups \cite{monsef2020classes}, and more general groupoids \cite{han2012fibonacci}. A central focus in much of this literature has been the period of a Fibonacci sequence and various characteristics derived from it.

This paper introduces the concept of a \textbf{universal Fibonacci sequence} (UFS) in a groupoid $(G, * )$. It is a sequence---either singly or doubly infinite---of the form $$\widehat{u} = (u_1, u_2, \ldots)$$ or $$\widehat{u} = (\ldots, u_{- 1}, u_0, u_1, \ldots),$$ whose set of suffixes $\{(u_k, u_{k + 1}, u_{k + 2}, \ldots)\}$ is precisely the set of all Fibonacci sequences in $(G, *)$. By definition, its terms satisfy the same recurrence relation that defines the Fibonacci sequence in $(G, *)$. We refer to groupoids that admit a universal Fibonacci sequence as \textbf{UFS-groupoids}.

For a Fibonacci sequence  $$(f_1, f_2, \ldots, f_n = f_{n - 2} * f_{n - 1}, \ldots),$$ every suffix $$(f_k, f_{k + 1}, f_{k + 2}, \ldots)$$ satisfies the same recurrence and is therefore itself a Fibonacci sequence. However, most pairs of Fibonacci sequences in an arbitrarily chosen groupoid are, as a rule, not suffixes of one another. As established in \ref{ufssuff}, the existence of a universal Fibonacci sequence is equivalent to the condition that for every pair of Fibonacci sequences, one is a suffix of the other. Thus, UFS-groupoids form an extremal class where the degree of nesting  of Fibonacci sequences within each other is maximal, and the structure of all Fibonacci sequences reduces to the behavior of a single universal Fibonacci sequence.

Universal Fibonacci sequences also admit a natural geometric interpretation. As we move along a Fibonacci sequence, we pass from $(a,b)$ to $(b, a * b)$. This is equivalent to moving through the Cayley table of the groupoid: the row index is replaced by the column index, and the column index is replaced by the entry in the Cayley table corresponding to their product under the operation. A universal Fibonacci sequence thus provides a complete traversal of the cells of the Cayley table according to this rule. In particular, periodic universal Fibonacci sequences correspond precisely to Hamiltonian cycles in the Cayley table.

On the other hand, any recurrence function $f$ defining a recurrent sequence $a_{k} = f(a_{k - 2}, a_{k - 1})$ of two arguments can be interpreted as a binary operation. Under this interpretation, UFS-groupoids describe exactly those recurrence relations $f$ that, regardless of the choice of the two initial points, always follow the same large common sequence.

As shown in Chapter \ref{universal}, a universal Fibonacci sequence $\widehat{u}$, if it exists, is unique up to a cyclic shift of its cycle in the periodic case. This allows us to regard a UFS-groupoid together with its uniquely determined universal Fibonacci sequence. The sequence itself encodes structural information about the operation, and its combinatorial properties are closely tied to the algebraic properties of the UFS-groupoid. The periodicity of the universal Fibonacci sequence plays a central role in this connection.

The article is divided into several chapters.

In Chapter \ref{defandres} we collect the definitions and known results used throughout the paper.

In Chapter \ref{universal} we introduce the main concepts, establish a number of auxiliary technical lemmas, and prove the basic properties of UFS-groupoids and universal Fibonacci sequences. In particular, we prove Lemma \ref{card} on the at most countable cardinality of the class of UFS-groupoids; Theorem \ref{uniqueinv}, which provides criteria for a sequence to be a universal Fibonacci sequence of some groupoid and shows that the groupoid can be recovered uniquely from the sequence; Theorem \ref{norepeat}, which characterizes universal Fibonacci sequences via the non-repetition of substrings of length $2$; and Theorem \ref{unique}, which establishes the uniqueness of a universal Fibonacci sequence.

Chapter \ref{fin} contains a detailed description of finite UFS-groupoids. In particular, Theorem \ref{periodic} shows that the finite UFS-groupoids are precisely those whose universal Fibonacci sequence is either periodic or singly infinite and strictly preperiodic. In Theorem \ref{ufsperde} we prove that the cycles of periodic universal Fibonacci sequences are exactly the de Bruijn sequences $B(n,2)$, yielding a natural bijection between UFS-groupoids with periodic universal Fibonacci sequences and de Bruijn sequences. Theorems \ref{preperiodic} and \ref{perp} show that the UFS-groupoids whose universal Fibonacci sequence is singly infinite and strictly preperiodic arise uniquely from UFS-groupoids with periodic universal Fibonacci sequences by a single redefinition of the operation on one pair of elements, and moreover any such redefinition produces a valid UFS-groupoid of this type. In Theorem \ref{fincount} we compute that on a set $G$ of cardinality $n$ there are exactly
\[
\dfrac{(n!)^{n}\, \bigl(1 + n^{2}(n - 1)\bigr)}{n^{2}}
\]
distinct UFS-groupoids.

In Chapter \ref{prop}, in Theorems \ref{idem}, \ref{cyclic}, \ref{assoc}, \ref{right}, \ref{rightq} and Lemmas \ref{neutral}, \ref{zero}, we establish various algebraic properties of UFS-groupoids, including the existence of at most one idempotent element, the absence of neutral and zero elements, local cyclicity, the failure of power associativity, and the presence of the right cancellation and right quasigroup properties for all pairs except possibly one and two, respectively. In Theorems \ref{sub}, \ref{homomorph} and Lemma \ref{noprod} it is proved that the class of UFS-groupoids is closed under taking subgroupoids and homomorphic images, it is shown how the universal Fibonacci sequence changes under such operations, and it is proved that the class is not closed under finite direct products, hence it is not a variety. In Theorems \ref{nosubufs} and \ref{onesubfs} it is proved that UFS-groupoids with periodic and singly infinite aperiodic universal Fibonacci sequences have no proper subgroupoids, while UFS-groupoids with doubly infinite aperiodic universal Fibonacci sequences can have at most one proper subgroupoid, which is moreover infinite. In Theorem \ref{strictsubs} it is proved that in UFS-groupoids with a strictly preperiodic universal Fibonacci sequence the subgroupoids form a nested chain. In Corollary \ref{cyclnodoublestr} it is proved that all UFS-groupoids whose universal Fibonacci sequence is not doubly infinite strictly preperiodic are cyclic. The chapter concludes with a final classification theorem \ref{tt} for all UFS-groupoids according to their cardinality and the periodicity type of the universal Fibonacci sequence.

In Chapter \ref{construct} we construct UFS-groupoids together with their universal Fibonacci sequences of all possible types, classified both by their cardinality and the periodicity type of the universal Fibonacci sequence. In particular, we construct inductively mutually optimally extendable UFS-groupoids with periodic universal Fibonacci sequences as well as UFS-groupoids containing an idempotent element. Constructions are given for embedding any UFS-groupoid with a singly infinite strictly preperiodic, periodic, or singly infinite aperiodic universal Fibonacci sequence into other UFS-groupoids. Infinitely generated UFS-groupoids are constructed.

\section{Definitions and known results}\label{defandres}

For the reader's convenience, this chapter collects the main definitions and concepts used in the paper. Most standard notions from algebra and the theory of Fibonacci sequences can be found, for example, in \cite{knox1992fibonacci, JezekKalaKepka, vinberg2003course, pushkashu2010associative}.

\subsection{Basic algebraic concepts}

\begin{mydef}
A \textbf{groupoid} or \textbf{magma} is a pair $(G, *)$ consisting of a set $G$ together with a \textbf{binary operation} $*: G \times G \to G$ on it.
\end{mydef}

For groupoids and their elements, a number of standard properties can be formulated.

\begin{mydef}
In a groupoid $(G, *)$, the following elements may exist:

\begin{enumerate}
\item \textbf{Neutral element}: an element $e \in G$ such that $a * e = e * a = a$ for all $a \in G$ is called a neutral element.
\item \textbf{Zero element}: an element $0 \in G$ such that $a * 0 = 0 * a = 0$ for all $a \in G$ is called a zero element.
\item \textbf{Idempotent element}: an element $i \in G$ such that $i * i = i$ is called an idempotent element.
\item \textbf{Inverse element}: if a neutral element $e \in G$ exists, then an element $a^{-1} \in G$ satisfying $a * a^{-1} = a^{-1} * a = e$ is called an inverse element of $a \in G$.
\end{enumerate}
\end{mydef}

\begin{mydef}
In a groupoid $(G, *)$, the following properties may hold:

\begin{enumerate}
\item \textbf{Power associativity}: for any $a \in G$ and any natural number $n$, the value of the expression $\underbrace{a * a * \ldots * a}_{n \text{ times}}$ is uniquely determined regardless of the placement of parentheses.
\item \textbf{Associativity}: for all $a, b, c \in G$, we have $(a * b) * c = a * (b * c)$.
\item \textbf{Commutativity}: for all $a, b \in G$, we have $a * b = b * a$.
\item \textbf{Existence of a neutral element}: there exists a neutral element $e \in G$.
\item \textbf{Existence of inverses}: for every $a \in G$, there exists an inverse element $a^{-1} \in G$.
\end{enumerate}
\end{mydef}

Note that a neutral and zero elements are idempotent, and associativity implies power associativity.

\begin{mydef}
An \textbf{abelian groupoid} is a commutative groupoid.
\end{mydef}

\begin{mydef}
A \textbf{semigroup} is an associative groupoid.
\end{mydef}

\begin{mydef}
A \textbf{monoid} is a semigroup with a neutral element.
\end{mydef}

In groupoids, one can consider the equations $a * x = b$ and $x * a = b$ for $x$. The solvability of these equations gives rise to definitions of various algebraic structures.

\begin{mydef}
A groupoid $(G, *)$ is called a \textbf{left cancellative groupoid} if for any $a, b \in G$ the equation $a * x = b$ has at most one solution; that is, $a * x_1 = a * x_2 \implies x_1 = x_2$.
\end{mydef}

\begin{mydef}
A groupoid $(G, *)$ is called a \textbf{right cancellative groupoid} if for any $a, b \in G$ the equation $x * a = b$ has at most one solution; that is, $x_1 * a = x_2 * a \implies x_1 = x_2$.
\end{mydef}

\begin{mydef}
A groupoid $(G, *)$ is called a \textbf{cancellative groupoid} if it is both left and right cancellative.
\end{mydef}

\begin{mydef}
A groupoid $(G, *)$ is called a \textbf{left quasigroup} if for any $a, b \in G$ the equation $a * x = b$ has a unique solution.
\end{mydef}

\begin{mydef}
A groupoid $(G, *)$ is called a \textbf{right quasigroup} if for any $a, b \in G$ the equation $x * a = b$ has a unique solution.
\end{mydef}

\begin{mydef}
A groupoid $(G, *)$ is called a \textbf{quasigroup} if it is both a left and a right quasigroup.
\end{mydef}

Many of these properties can also be formulated elementwise. For instance, for a specific pair of elements $(a, b) \in G^2$ in a groupoid $(G, *)$, the right cancellation property is said to hold if the equation $x * a = b$ has at most one solution $x$, and the right quasigroup property holds if the equation $x * a = b$ has exactly one solution $x$. Note that a quasigroup is a special case of a cancellative groupoid. For finite groupoids, these notions coincide \cite{JezekKalaKepka}: a finite left (right) cancellative groupoid is a left (right) quasigroup, and a finite cancellative groupoid is a quasigroup.

\begin{mydef}
A \textbf{loop} is a quasigroup with a neutral element.
\end{mydef}

\begin{mydef}
A \textbf{group} is an associative groupoid with a neutral element and inverses for all elements.
\end{mydef}

\begin{mydef}
A \textbf{subgroupoid} of a groupoid $(G, *)$ is a nonempty groupoid $(A, *)$ with $A \subset G$ such that for all $a, b \in A$ we have $a * b \in A$, where the operation $*$ is restricted to the elements of $A$.
\end{mydef}

\begin{mydef}
A \textbf{subgroup} of a group $(G, *)$ is a subgroupoid $(A, *)$ that contains inverses for all its elements.
\end{mydef}

\begin{mydef}
    A \textbf{proper subgroupoid/subgroup} of a groupoid/group $(G, *)$ is a subgroupoid $(A, *)$ with $A \ne G$.
\end{mydef}

A groupoid is a group if and only if it is both a semigroup and a quasigroup \cite{albert1943quasigroups}.

\begin{mydef}
A \textbf{generating set} of a groupoid $(G, *)$ is a subset $A \subset G$ such that every element of $G$ can be expressed as a finite product of elements of $A$.
\end{mydef}

\begin{mydef}
A \textbf{generating set} of a group $(G, *)$ is a subset $A \subset G$ such that every element of $G$ can be expressed as a finite product of elements of $A$ and their inverses.
\end{mydef}

The fact that $A$ is a generating set of a groupoid/group $(G, *)$ is denoted by $\langle A \rangle = G$. If $A = {a_1, a_2, \ldots, a_n}$, we also write $\langle a_1, \ldots, a_n\rangle = G$.

\begin{mydef}
A groupoid/group $(G, *)$ is called \textbf{$k$-generated} if there exists $A \subset G$ such that $\langle A \rangle = G$ and $|A| = k$.
\end{mydef}

\begin{mydef}
A groupoid/group $(G, *)$ is called \textbf{finitely generated} if $(G, *)$ is $k$-generated for some natural number $k$.
\end{mydef}

\begin{mydef}
A groupoid/group $(G, *)$ is called \textbf{infinitely generated} if it is not finitely generated.
\end{mydef}

\begin{mydef}
A groupoid/group $(G, *)$ is called \textbf{cyclic} if there exists $a \in G$ such that $\langle a \rangle = G$.
\end{mydef}

\begin{mydef}
A groupoid/group $(G, *)$ is called \textbf{locally cyclic} if every finitely generated subgroupoid/subgroup is cyclic.
\end{mydef}

The structure of cyclic groups and cyclic semigroups (semigroups considered as cyclic groupoids) is well known \cite{vinberg2003course, clifford1961algebraic}; we state it explicitly. Arbitrary cyclic groupoids can have a more complex structure.

\begin{mythm}
Any cyclic group $(G, *) = \langle a \rangle$ satisfies:
\begin{enumerate}
\item if $|G| = n$, then $(G, *)$ is isomorphic to the additive group of residues $(\mathbb{Z}_n, +)$;
\item if $G$ is infinite, then $(G, *)$ is isomorphic to the additive group of integers $(\mathbb{Z}, +)$.
\end{enumerate}
\end{mythm}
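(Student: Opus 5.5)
The plan is to build a surjective homomorphism from $(\mathbb{Z}, +)$ onto $G$ and then identify its kernel. Since $G = \langle a \rangle$ and the operation is associative with inverses, every element of $G$ is a finite product of copies of $a$ and $a^{-1}$. By associativity and $a * a^{-1} = e$, such a product collapses to a power $a^k$ with $k \in \mathbb{Z}$. Here I use the conventions $a^0 = e$, $a^{k} = a * \cdots * a$ ($k$ factors) for $k > 0$, and $a^{-k} = (a^{-1})^k$.

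Define $\varphi \colon \mathbb{Z} \to G$ by $\varphi(k) = a^k$. The first real step is to verify the exponent law $a^m * a^n = a^{m+n}$ for all integers $m, n$. This is the only place that needs care. I will split into cases according to the signs of $m$ and $n$. When the signs agree, the law is just associativity. When they differ, I cancel adjacent pairs $a * a^{-1} = e$ one at a time, arguing by induction on $\min(|m|,|n|)$. Given this law, $\varphi$ is a homomorphism, and it is surjective by the first paragraph.

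Next, let $K = \ker \varphi$, which is a subgroup of $\mathbb{Z}$. By division with remainder, $K = n\mathbb{Z}$ for some $n \ge 0$. Concretely, if $K \neq 0$, take $n$ to be the least positive element of $K$; for any $k \in K$ write $k = qn + r$ with $0 \le r < n$, so $r \in K$, which forces $r = 0$. The first isomorphism theorem then gives $G \cong \mathbb{Z}/n\mathbb{Z}$.

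Finally, I split into two cases:
\begin{enumerate}
\item If $n = 0$, then $\varphi$ is injective, so $G \cong (\mathbb{Z}, +)$, and $G$ is infinite.
\item If $n > 0$, then $G \cong (\mathbb{Z}_n, +)$ and $|G| = n$.
\end{enumerate}
These two cases are exhaustive and exclusive, so $|G| = n$ finite corresponds exactly to item 1 of the statement and $G$ infinite to item 2. The main obstacle is the bookkeeping in the exponent law for mixed signs; everything after it is standard.
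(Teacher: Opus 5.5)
Your proof is correct: once the exponent law $a^m * a^n = a^{m+n}$ is established, $k \mapsto a^k$ is a surjective homomorphism $(\mathbb{Z},+) \to G$ with kernel $n\mathbb{Z}$, and the first isomorphism theorem gives both items. Two small points are cosmetic only: collapsing a word in $a^{\pm 1}$ to a single power $a^k$ already uses the exponent law, so that step belongs after it, and the kernel generator equals $|G|$ because $|\mathbb{Z}/n\mathbb{Z}| = n$. The paper gives no proof of its own, citing the result as classical from \cite{vinberg2003course, clifford1961algebraic}, and your argument is essentially the standard textbook proof found there.
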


\begin{mythm}\label{semigroup}
Any cyclic semigroup $(G, *) = \langle a \rangle$ satisfies:
\begin{enumerate}
\item if $|G| = n$, then $(G, *)$ consists of a tail $T = \{a, a^2, \ldots, a^{m-1}\}$ and a cyclic group $K = \{a^m, \ldots, a^{m+r-1}\}$, where $n = m + r - 1$. The number $m$ is called the \textbf{index} of $(G, *)$, and $r$ is called the \textbf{period} of $(G, *)$. Moreover, $a^{m+x} = a^{m+y}$ if and only if $x \equiv y \bmod{r}$.
\item if $G$ is infinite, then $(G, *)$ is isomorphic to the additive semigroup of natural numbers $(\mathbb{N}, +)$.
\end{enumerate}
\end{mythm}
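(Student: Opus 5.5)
The approach is the standard one for monogenic semigroups: reduce everything to the powers of the generator. Since $(G,*)=\langle a\rangle$ is associative, every finite product of copies of $a$ has a value independent of the bracketing, so $G=\{a^k : k\ge 1\}$ with $a^i * a^j = a^{i+j}$. In other words, the map $\varphi:(\mathbb{N},+)\to(G,*)$, $k\mapsto a^k$, is a surjective homomorphism. The argument then splits according to whether $\varphi$ is injective.

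If all powers $a^k$ are distinct, $\varphi$ is a bijective homomorphism and hence an isomorphism with $(\mathbb{N},+)$. This happens precisely when $G$ is infinite: if $G$ is infinite, the powers cannot repeat, because a single coincidence forces finiteness, as the finite case below shows.

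Now suppose some coincidence $a^i=a^j$ with $i<j$ occurs. Let $m$ be the least exponent such that $a^m=a^{m+s}$ for some $s\ge1$, and let $r$ be the least such $s$ for this $m$. Multiplying by powers of $a$ gives $a^{m+t}=a^{m+t+r}$ for all $t\ge0$, so every power $a^k$ with $k\ge m$ equals one of $a^m,\dots,a^{m+r-1}$. Hence $G=T\cup K$ with $T=\{a,\dots,a^{m-1}\}$ and $K=\{a^m,\dots,a^{m+r-1}\}$. Next we check that these $m+r-1$ elements are pairwise distinct:
\begin{itemize}
\item A coincidence inside $T$, or between $T$ and $K$, would give a relation $a^{i}=a^{i+s}$ with $i<m$ and $s\ge1$, contradicting the minimality of $m$.
\item A coincidence inside $K$, say $a^{m+x}=a^{m+y}$ with $0\le x<y<r$, would, after multiplying by $a^{r-y}$, yield $a^{m+x+r-y}=a^{m}$ with $0<r-(y-x)<r$, contradicting the minimality of $r$.
\end{itemize}
Therefore $n=m+r-1$. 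The same two computations give the criterion: $a^{m+x}=a^{m+y}$ if and only if $x\equiv y \pmod r$. The direction "if" follows from periodicity, and "only if" from reducing both exponents modulo $r$ into $[m,m+r-1]$ and applying distinctness.

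The main remaining step, which I expect to be the only nontrivial one, is showing that $K$ is a cyclic group. Closure is clear, since exponents of elements of $K$ only increase and so remain $\ge m$. The map $a^{m+x}\mapsto (m+x)\bmod r$ is a well-defined bijection $K\to\mathbb{Z}_r$ by the criterion above, and it turns multiplication into addition of exponents modulo $r$. Hence $K\cong(\mathbb{Z}_r,+)$.

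Concretely, the neutral element is the unique $a^{e}$ with $e\ge m$ and $e\equiv 0\pmod r$. The inverse of $a^{k}$ is $a^{k'}$ with $k'\ge m$ and $k+k'\equiv 0\pmod r$. This completes part 1, and part 2 was handled above.
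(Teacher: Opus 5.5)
Your proof is correct and complete: minimality of $m$ excludes coincidences involving the tail, minimality of $r$ (after right-multiplying by $a^{r-y}$) excludes coincidences inside $K$, a single coincidence already forces finiteness, and reading exponents modulo $r$ gives $K\cong(\mathbb{Z}_r,+)$. The paper has no proof of its own to compare against --- it quotes this as a standard fact about monogenic semigroups from its cited textbooks --- and your argument is the standard one; for part 2 it relies on the paper's convention $\mathbb{N}=\{1,2,\dots\}$, which your map $k\mapsto a^k$, $k\ge 1$, implicitly uses.
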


\begin{myrem}
Every cyclic group is locally cyclic \cite{vinberg2003course}. However, a cyclic groupoid need not be locally cyclic. For example, $(\mathbb{N}, +) = \langle 1 \rangle$, but its subgroupoid $\langle 2, 3 \rangle$ is not cyclic.
\end{myrem}

\subsection{Varieties}

\begin{mydef}
   The \textbf{direct product} of a family of groupoids $(G_i, *_i)$, $i \in A$, is the groupoid $(\prod_{i \in A} G_i, *')$ with operation defined by
   $$(\ldots, g_i, \ldots) *' (\ldots, g_i', \ldots) = (\ldots, g_i *_i g_i', \ldots).$$
\end{mydef}

\begin{mydef}
   A \textbf{homomorphism} of groupoids $(G, *)$ and $(G', *')$ is a function $f: G \to G'$ such that $f(g_1 * g_2) = f(g_1) *' f(g_2)$ for all $g_1, g_2 \in G$.
\end{mydef}

\begin{mydef}
   The \textbf{homomorphic image} $f((G, *))$ under a homomorphism $f: G \to G'$ of groupoids $(G, *)$ and $(G', *')$ is the groupoid obtained by restricting the operation $*'$ to $f(G)$.
\end{mydef}

By the definition of a homomorphism, the homomorphic image $f((G, *))$ under a homomorphism $f: G \to G'$ of groupoids $(G, *)$ and $(G', *')$ is always well-defined and is a subgroupoid of $(G', *')$.

\begin{mydef}
   A \textbf{word} in a groupoid $(G, *)$ over variables $a_1, \ldots, a_n$ is a fixed expression obtained by applying $*$ to them in some order.
\end{mydef}
\begin{mydef}
   An \textbf{identity} in a groupoid $(G, *)$ is an equality $t = d$ of two words $t, d$ over some variables $a_1, a_2, \ldots, a_n$ that holds for all $a_1, a_2, \ldots, a_n \in G$.
\end{mydef}
\begin{mydef}
   A \textbf{variety} of groupoids is a class of all groupoids satisfying a fixed finite or infinite set of identities.
\end{mydef}

Many classical algebraic objects are varieties, such as semigroups (satisfying the associativity identity) and abelian groupoids (satisfying the commutativity identity).

There is a classical theorem on varieties due to Birkhoff. We present it in a form that characterizes when a class of groupoids is a variety.

\begin{mythm}\label{var}\cite{Birkhoff1935} A class $K$ of groupoids is a variety if and only if:

\begin{enumerate}
    \item If $(G, *) \in K$, then every subgroupoid of $(G, *)$ also belongs to $K$.
    \item If $(G_i, *_i) \in K$ for all $i \in A$, then the direct product of the family $(G_i, *_i)$, $i \in A$, also belongs to $K$.
    \item If $(G, *) \in K$ and a homomorphism $f: G \to G'$ into an arbitrary groupoid $(G', *')$ is given, then the homomorphic image $f((G, *))$ also belongs to $K$.
\end{enumerate}
\end{mythm}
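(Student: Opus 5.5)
This is Birkhoff's HSP theorem specialised to the signature with a single binary operation. I would prove it along the classical lines, in two directions, using term algebras and a relatively free groupoid.

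\emph{Necessity.} Suppose $K$ is the class of all groupoids satisfying a set $\Sigma$ of identities. I would verify directly that each identity $t=d$ is preserved by the three constructions.
For a subgroupoid, the identity holds because it holds for all substitutions in the larger groupoid.
For a direct product, words are evaluated componentwise, so $t=d$ holds coordinatewise.
For a homomorphic image $f(G)$, take any elements of $f(G)$ and write them as $f(g_1),\ldots,f(g_n)$. By induction on the structure of a word $w$ we have $w(f(g_1),\ldots)=f(w(g_1,\ldots))$. Applying this to $t$ and $d$ gives the identity in $f(G)$.

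\emph{Sufficiency, setup.} Let $K$ satisfy conditions 1--3 of the statement, and let $\mathrm{Id}(K)$ be the set of all identities holding in every member of $K$. Let $K'$ be the class of all groupoids satisfying $\mathrm{Id}(K)$. Clearly $K\subseteq K'$, so it suffices to show $K'\subseteq K$.

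Two preliminary remarks are needed.
\begin{enumerate}
\item $K$ is closed under isomorphism, since an isomorphic copy is a homomorphic image.
\item The empty product (the one-element groupoid) lies in $K$ by condition 2. This also rules out the degenerate case of an empty $K$.
\end{enumerate}

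\emph{Sufficiency, construction.} Fix $G\in K'$ and a set of variables $X$ with $|X|\ge |G|$. Let $T(X)$ be the absolutely free groupoid of words over $X$. Let $\theta$ be the intersection of the kernels of all homomorphisms $h:T(X)\to A$ with $A\in K$.

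The step I expect to need the most care is showing that $F=T(X)/\theta$ lies in $K$. This is partly set-theoretic: the homomorphisms form a proper class. I would handle it as follows.
\begin{enumerate}
\item Each image $h(T(X))$ is a subgroupoid of $A$, so it lies in $K$ by condition 1.
\item Each image has cardinality at most $\max(|X|,\aleph_0)$. Hence, up to isomorphism, the images can be indexed by a set $I$ of representatives $(A_i,h_i)$, with every kernel occurring among the $\ker h_i$.
\item The map $T(X)\to\prod_{i\in I}A_i$, $w\mapsto(h_i(w))_i$, has kernel exactly $\theta$.
\item Therefore $F$ is isomorphic to a subgroupoid of a product of members of $K$, so $F\in K$ by conditions 1 and 2.
\end{enumerate}

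\emph{Sufficiency, conclusion.} Next I note that $(s,t)\in\theta$ holds precisely when $s=t$ belongs to $\mathrm{Id}(K)$. Indeed, every substitution of values from some $A\in K$ is a homomorphism $T(X)\to A$.

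Choose a surjection $X\to G$ and extend it to a homomorphism $g:T(X)\to G$; it is onto. If $(s,t)\in\theta$, then $s=t$ is an identity of $K$, hence holds in $G$, so $g(s)=g(t)$. Thus $\theta\subseteq\ker g$, and $g$ factors through a surjective homomorphism $F\to G$. By condition 3 we get $G\in K$.

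Hence $K=K'$ is defined by the identities $\mathrm{Id}(K)$ and is a variety.
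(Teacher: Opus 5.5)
The paper does not prove this statement: it quotes it from Birkhoff's 1935 paper and uses only the easy direction (a variety is closed under direct products) to conclude from Lemma~\ref{noprod} that UFS-groupoids do not form a variety. Your argument is the standard free-algebra proof of the HSP theorem and it is correct. Two points that are often glossed over are handled properly. First, the empty product supplies the one-element groupoid, which rules out $K=\emptyset$ and gives $F\in K$ even when there are no homomorphisms. Second, the cardinality bound $\max(|X|,\aleph_0)$ lets you replace the proper class of homomorphisms by a set of representatives. The cleanest choice is one quotient $T(X)/\ker h$ per kernel, which lies in $K$ by condition 1 together with isomorphism-closure from condition 3.
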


\subsection{Finite and infinite sequences}

In this paper, we work with various sequences. Sequences can be \textbf{finite}, typically indexed by some set of consecutive integers; \textbf{singly infinite}, usually indexed by natural numbers or non-negative integers; and \textbf{doubly infinite}, which in this paper are considered up to a fixed shift of indices and are usually indexed by integers. There are also special constructions based on sequences.

We write sequences in parentheses. To indicate a segment of a sequence, we use ellipses, as in $(\ldots, a_k, a_{k + 1}, a_{k + 2}, \ldots)$.

If a sequence $\widehat{x} = (\ldots, x_n, x_{n + 1}, \ldots)$ of elements of $X$ and a map $f: X \to X'$ are given, we denote by $f(\widehat{x})$ the sequence $(\ldots, f(x_n), f(x_{n + 1}), \ldots)$ obtained by applying $f$ elementwise.

\begin{mydef}
    A \textbf{cyclically ordered sequence} or \textbf{cyclic sequence} of length $n$
    $$\widehat{x} = (x_0, x_1, \ldots, x_{n - 1})$$
    is a sequence considered up to \textbf{cyclic shift}, i.e., simultaneous shift of all indices modulo $n$.
\end{mydef}

\begin{mydef}
    The elements of a finite sequence
    $$\widehat{a} = (a_0, a_1, \ldots, a_{k - 1})$$
    \textbf{occur consecutively} or \textbf{form a substring} in a cyclic sequence $\widehat{x}$ of length $n$ if the indices of $\widehat{x}$ can be shifted modulo $n$ so that
    $$\widehat{x} = (\ldots, \underbrace{a_0, a_1, \ldots, a_{k - 1}}_{\widehat{a}}, \ldots).$$
\end{mydef}

\begin{mydef}
    The elements of a finite sequence
    $$\widehat{a} = (a_0, a_1, \ldots, a_{k - 1})$$
    \textbf{occur consecutively at two different positions at distance $l$} in a cyclic sequence $\widehat{x}$ of length $n$ if there exist two shifts of the indices of $\widehat{x}$ modulo $n$, say $t$ and $t + l$, such that in both cases
    $$\widehat{x} = (\underbrace{a_0, a_1, \ldots, a_{k - 1}}_{\widehat{a}}, \ldots).$$
\end{mydef}

Cyclic sequences in which every possible substring of a fixed length appears exactly once play a special role. The study of such sequences began with \cite{de1946combinatorial, de1951circuits}.
 
\begin{mydef}
    A \textbf{de Bruijn sequence} of order $n$ over an alphabet $\Omega$ of $k$ elements is a cyclic sequence in which every possible sequence of length $n$ over the alphabet $\Omega$ occurs as a substring exactly once. Such a de Bruijn sequence is denoted by $B(k, n)$.
\end{mydef}

Obviously, a de Bruijn sequence $B(k, n)$ has length $k^n$. From \cite{de1951circuits} the number of distinct such sequences is known explicitly.

\begin{mythm}\label{debrco}
    The total number of distinct de Bruijn sequences $B(k, n)$ over a fixed alphabet $\Omega$ of size $k$ is
    $$\dfrac{(k!)^{k^{n - 1}}}{k^n}.$$
\end{mythm}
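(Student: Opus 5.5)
The plan is to prove Theorem~\ref{debrco} by the classical route: de Bruijn sequences correspond to Eulerian circuits in the de Bruijn graph, and those circuits are counted with the BEST theorem and the matrix-tree theorem.

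\textbf{Step 1: the graph.} Put $m = n-1$ and let $D = D(k,m)$ be the directed graph defined as follows.
\begin{itemize}
\item Its vertices are the $k^m$ words of length $m$ over $\Omega$.
\item For every word $w = (a_0,\ldots,a_{n-1})$ of length $n$ there is one edge $(a_0,\ldots,a_{n-2}) \to (a_1,\ldots,a_{n-1})$, labelled by $w$. Loops are allowed.
\end{itemize}
Every vertex has in-degree and out-degree exactly $k$. The graph is strongly connected, since from any word one reaches any other in $m$ steps by shifting in its letters.

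\textbf{Step 2: the bijection.} A cyclic sequence $(x_0,\ldots,x_{k^n-1})$ is a $B(k,n)$ exactly when its $k^n$ cyclic windows of length $n$ are pairwise distinct. Consecutive windows are consecutive edges of $D$, so this happens exactly when the windows form a closed walk using every edge once, i.e.\ an Eulerian circuit. Conversely, a circuit determines the cyclic sequence by reading the last letter of each edge. Cyclic shifts of the sequence correspond to rotations of the circuit. Hence the number of de Bruijn sequences equals $\mathrm{ec}(D)$, the number of Eulerian circuits counted up to rotation. The case $n=1$ (one vertex with $k$ loops) fits this picture and gives $(k-1)! = k!/k$.

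\textbf{Step 3: BEST theorem.} For a connected Eulerian digraph,
\[
\mathrm{ec}(D) = t_w(D)\prod_{v}(\deg^+ v - 1)!,
\]
where $t_w(D)$ is the number of spanning arborescences oriented towards a fixed root $w$; this count does not depend on $w$ for Eulerian digraphs. Here the product is $((k-1)!)^{k^m}$. Since
\[
(k!)^{k^m}/k^n = k^{k^m}((k-1)!)^{k^m}/k^{m+1},
\]
it remains to show
\[
t_w(D) = k^{\,k^m - m - 1}.
\]

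\textbf{Step 4: counting arborescences (main obstacle).} Let $A$ be the adjacency matrix of $D$ and $L = kI - A$ its Laplacian.
\begin{itemize}
\item By the directed matrix-tree theorem, $t_w(D)$ is the principal minor of $L$ obtained by deleting row and column $w$.
\item Summing over all $w$ gives the coefficient of $\lambda$ in $\det(\lambda I - L)$, up to sign. Since $D$ is Eulerian, all $N = k^m$ of these minors are equal.
\item That coefficient equals the product of the nonzero eigenvalues of $L$, so $t_w(D)$ is $1/N$ times this product.
\end{itemize}
To find the eigenvalues, note that $A^m = J$, the all-ones matrix, because between any two words of length $m$ there is exactly one walk of length $m$. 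The eigenvalues of $J$ are $N$ (once) and $0$ (with multiplicity $N-1$). Hence every eigenvalue $\mu$ of $A$ satisfies $\mu^m \in \{0, N\}$. Since $\operatorname{tr} A^j = \operatorname{tr} J = N$ for all $j \ge m$, and $A$ has eigenvalue $k$ for the all-ones vector, the spectrum of $A$ is $\{k, 0^{(N-1)}\}$. Therefore $L$ has eigenvalue $0$ once and $k$ with multiplicity $N-1$. This gives
\[
t_w = k^{N-1}/N = k^{\,k^m - m - 1},
\]
as required.

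The delicate point is this eigenvalue argument, because $A$ is not symmetric. I would therefore argue directly with the characteristic polynomial (nilpotent part plus the trace identity) rather than with diagonalization.
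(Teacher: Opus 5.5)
The paper gives no proof of this statement. It simply quotes it from de Bruijn's 1951 work on circuits and trees in oriented graphs, which is where the circuit--tree correspondence (the BEST theorem) was proved and applied to this count. Your proposal therefore does not compete with an argument in the text. It reconstructs the classical proof, and it is correct.

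The bookkeeping checks out:
\begin{itemize}
\item Counting Eulerian circuits up to rotation (equivalently, with a fixed first edge, as BEST does) matches counting cyclic sequences.
\item Loops cancel in $L=kI-A$.
\item The $N$ principal minors are equal because $L$ has zero row and column sums.
\item $k^{N-1}/N=k^{k^{m}-m-1}$, combined with $((k-1)!)^{k^{m}}$, gives $(k!)^{k^{n-1}}/k^{n}$.
\end{itemize}

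One sentence in Step 4 is false as written: $\mathrm{tr}\,A^{j}=\mathrm{tr}\,J=N$ holds only for $j=m$. Every row of $A$ sums to $k$, so $A^{j}=k^{j-m}J$ for $j\ge m$ and hence $\mathrm{tr}\,A^{j}=k^{j}$.

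You do not need any trace identity. The eigenvalues of $A^{m}$, counted with algebraic multiplicity, are the $\mu_i^{m}$ (triangularize $A$ over $\mathbb{C}$). Since $J$ has spectrum $N,0,\ldots,0$, exactly one $\mu_i$ is nonzero and the other $N-1$ vanish. The value $k$ is an eigenvalue (all-ones vector) with $k^{m}=N\neq 0$, so it is the nonzero one. This uses neither symmetry nor diagonalizability of $A$, so the concern you raise at the end is already handled.
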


The notion of a substring can also be formulated for ordinary finite or infinite sequences.

\begin{mydef}
    The elements of a finite sequence
    $$\widehat{a} = (a_0, a_1, \ldots, a_{k - 1})$$
    \textbf{occur consecutively} or \textbf{form a substring} in a finite or infinite (singly or doubly infinite) sequence $\widehat{x}$ \textbf{starting at position $l$} if
    $$\widehat{x} = (\ldots, x_{l - 1},\underbrace{a_0, a_1, \ldots, a_{k - 1}}_{\widehat{a}}, \ldots).$$
\end{mydef}

\begin{mydef}
    The elements of a finite sequence
    $$\widehat{a} = (a_0, a_1, \ldots, a_{k - 1})$$
    \textbf{occur consecutively at two different positions at distance $l$} in a finite or infinite (singly or doubly infinite) sequence $\widehat{x}$ if they form a substring starting at positions $t$ and $t + l$.
\end{mydef}

For infinite sequences, one can also define the notion of a suffix.

\begin{mydef}
    A \textbf{suffix} of a singly or doubly infinite sequence
    $$\widehat{x} = (\ldots, x_i, x_{i + 1}, x_{i + 2}, \ldots)$$
    \textbf{starting at position $k$} is the singly infinite sequence
    $$(x_k, x_{k + 1}, x_{k + 2}, \ldots)$$
    obtained by taking all elements of $\widehat{x}$ from position $k$ onward.
\end{mydef}

For convenience, we denote the fact that a singly infinite sequence $\widehat{x}' = (x_k, x_{k + 1}, x_{k + 2}, \ldots)$ is a suffix of a singly or doubly infinite sequence $\widehat{x}$ by $\widehat{x} = (\ldots, x_{k - 2}, x_{k - 1}, \widehat{x}')$.

\begin{myex}
    Consider the sequence
    $$\widehat{x} = (0, 1, 1, 1, 1, \ldots)$$
    defined by
    $$x_i = \begin{cases}
    0, & \text{if } i = 1; \\
    1, & \text{if } i > 1.
    \end{cases}$$
    Then the constant sequence $$\widehat{x}' = (1, 1, 1, 1, \ldots)$$ is a suffix of $\widehat{x}$, and $$\widehat{x} = (0, \widehat{x}').$$
\end{myex}

\begin{myex}
    Consider the original Fibonacci sequence
    $$\widehat{f} = (0, 1, 1, 2, 3, 5, \ldots).$$
    Then the sequence
    $$\widehat{f}' = (2, 3, 5, 8, \ldots)$$
    defined by $f_0' = 2$, $f_1' = 3$, $\ldots$, $f_n' = f_{n-2}' + f_{n-1}'$ for $n > 1$, is a suffix of $\widehat{f}$, and $$\widehat{f} = (0, 1, 1, \widehat{f}').$$
\end{myex}

\begin{myex}
    Consider the doubly infinite sequence
    $$\widehat{x} = (\ldots, -2, -1, 0, 1, 2, \ldots)$$
    defined by $x_i = i$ for all $i \in \mathbb{Z}$. Then the sequence
    $$\widehat{x}' = (1, 2, 3, 4, \ldots)$$
    defined by $x_n' = n$ for all $n \in \mathbb{N}$, is a suffix of $\widehat{x}$, and $$\widehat{x} = (\ldots, -2, -1, 0, \widehat{x}').$$
\end{myex}

For infinite sequences, one can consider their periodicity.

\begin{mydef}
    A singly or doubly infinite sequence
    $$\widehat{x} = (\ldots, x_i, x_{i + 1}, x_{i + 2}, \ldots)$$
    is called \textbf{periodic} if there exists a natural number $n$ such that every element of $\widehat{x}$ occurs again after $n$ positions. The minimal such $n$ is called the \textbf{period} of $\widehat{x}$.
\end{mydef}

Any periodic sequence of period $n$ consists of an infinite repetition of a finite sequence $(x_1, x_2, \ldots, x_n)$ called a \textbf{cycle}:
$$\widehat{x} = (\ldots, \underbrace{x_1, x_2, \ldots, x_n}, \underbrace{x_1, x_2, \ldots, x_n}, \underbrace{x_1, x_2, \ldots, x_n}, \ldots).$$
For singly infinite sequences, the cycle is uniquely determined; for doubly infinite sequences, it is determined up to a cyclic shift. The cycle of a doubly infinite periodic sequence can be regarded as a cyclically ordered sequence.

\begin{mydef}
    A singly or doubly infinite sequence is called \textbf{strictly preperiodic} with \textbf{period} $n$ if it is not periodic but has a periodic suffix of period $n$.
\end{mydef}

Any strictly preperiodic sequence $\widehat{x}$ (singly or doubly infinite) can be split into a \textbf{preperiodic part}
$$(\ldots, x_{k - 2}, x_{k - 1})$$
consisting of elements that are not part of some periodic suffix, and a \textbf{periodic part} given by the cycle
$$(x_k, \ldots, x_{k + n})$$
of a maximal periodic suffix.

\begin{mydef}
    A singly or doubly infinite sequence is called \textbf{aperiodic} if it has no periodic suffixes.
\end{mydef}

The following classification follows directly from the definitions.

\begin{myprop}
    Every singly or doubly infinite sequence belongs to one of the following disjoint classes:
    \begin{enumerate}
        \item Periodic sequences;
        \item Strictly preperiodic sequences;
        \item Aperiodic sequences.
    \end{enumerate}
\end{myprop}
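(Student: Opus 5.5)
The statement is the trichotomy: every singly or doubly infinite sequence is exactly one of periodic, strictly preperiodic, or aperiodic. I will check two things directly from the definitions. First, the three classes are pairwise disjoint. Second, they cover everything.

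\textbf{Disjointness.}
\begin{enumerate}
\item Strictly preperiodic sequences are by definition not periodic, so the first two classes are disjoint.
\item A periodic sequence $\widehat{x}$ of period $n$ is not aperiodic. Indeed, its suffix starting at any position $k$ satisfies the same relation $x_{i+n}=x_i$ for all $i \ge k$, so it is itself a periodic suffix.
\item A strictly preperiodic sequence has a periodic suffix by definition, so it is not aperiodic.
\end{enumerate}

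\textbf{Covering.} Let $\widehat{x}$ lie in none of the first two classes. We must show that $\widehat{x}$ is aperiodic, i.e.\ that it has no periodic suffix. Suppose instead that it had a periodic suffix of some period $n$. Then either $\widehat{x}$ is periodic, which places it in the first class, or it is not periodic, in which case it is strictly preperiodic with period $n$ by definition. Both outcomes are excluded, so no periodic suffix exists and $\widehat{x}$ is aperiodic.

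\textbf{The one point needing care.} The term ``period $n$'' in the definition of strictly preperiodic presupposes that the periodic suffix has a well-defined minimal period. Suppose two periodic suffixes are given, with minimal periods $n$ and $n'$. One of them contains the other, so the smaller suffix is periodic with both periods $n$ and $n'$. Minimality then forces $n = n'$, so the period is well defined.

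This point is not needed for the trichotomy itself, which is purely a matter of set membership, so I expect no real obstacle.
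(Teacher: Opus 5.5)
Your argument is correct and takes the same approach as the paper, which gives no separate proof and simply says the trichotomy follows directly from the definitions; your disjointness-plus-covering check spells that out. One minor point in your side remark, which as you say is not needed: if the larger suffix has minimal period $n$ and the smaller one minimal period $n'$, minimality of $n'$ only gives $n' \le n$, and for $n \le n'$ you must also show the larger suffix has period $n'$. This follows from $x_{i+n'} = x_{i+n'+tn} = x_{i+tn} = x_i$, taking $t$ large enough that $i+tn$ lies in the smaller suffix.
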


\subsection{Fibonacci sequences}

 For an arbitrary groupoid $(G, *)$ without any additional structure, the most natural way to construct a recurrent sequence is to apply the operation sequentially to the preceding terms.

\begin{mydef}
    A \textbf{(right) Fibonacci sequence} in a groupoid $(G, *)$ generated by elements $a$ and $b$ is the infinite sequence
    $$Fib(a, b) = (f_1, f_2, f_3, \ldots)$$
    such that
    $$f_1 = a, f_2 = b, \ldots, f_n = f_{n - 2} * f_{n - 1} \text{ for } n > 1.$$
\end{mydef}

All elements of a Fibonacci sequence are uniquely determined by its first two elements. Similarly to the right Fibonacci sequence, one can define a left Fibonacci sequence with the property $f_n = f_{n - 1} * f_{n - 2}$. In this paper, a Fibonacci sequence is understood to be the right Fibonacci sequence, but all results can be analogously transferred to left Fibonacci sequences. In abelian groupoids, these notions coincide.

In group theory, Fibonacci sequences are also often called \textbf{Fibonacci orbits} \cite{campbell1990fibonacci}. Sequences defined by a recurrence involving the product of more than two preceding terms are also studied.

One of the central topics in the study of Fibonacci sequences is the investigation of Fibonacci lengths for pairs of elements. Fibonacci lengths have been extensively studied for a number of finite groups and group constructions \cite{campbell1990fibonacci, doostie2005fibonacci, johnson2005fibonacci}.

\begin{mydef}
    The \textbf{Fibonacci length} of a pair of elements $(a, b) \in G^2$ in a groupoid $(G, *)$ is the period of $Fib(a, b)$, if it has one.
\end{mydef}

Thus, the Fibonacci length is defined for pairs that generate non-aperiodic Fibonacci sequences.

\begin{myex}
    For any groupoid $(G, *)$ with a neutral element $e \in G$, the sequence
    $$Fib(e, e) = (e, e, e, \ldots)$$
    is constant; therefore, the Fibonacci length of the pair $(e, e)$ in $(G, *)$ is $1$.
\end{myex}

\begin{myex}
    For the additive group of residues $(\mathbf{Z}_2, +)$, the Fibonacci sequence
    $$Fib(0, 1) = (0, 1, 1, 0, 1, 1, \ldots)$$
    has period $3$; hence, the Fibonacci length of the pair $(0, 1)$ in $\mathbf{Z}_2$ is $3$.
\end{myex}

\begin{myex}
    For the additive group of integers $(\mathbf{Z}, +)$, the classical Fibonacci sequence
    $$Fib(0, 1) = (0, 1, 1, 2, 3, \ldots)$$
    is aperiodic (strictly increasing after the third term); therefore, the Fibonacci length of the pair $(0, 1)$ in $\mathbf{Z}$ is not defined.
\end{myex}

\section{Universal Fibonacci sequences and UFS-groupoids}\label{universal}

Before introducing the main definition of this chapter, let us note that many Fibonacci sequences in groupoids naturally embed into one another. This is due to the fact that suffixes of Fibonacci sequences are themselves Fibonacci sequences.

\begin{mylemma}

Any suffix $$(f_k, f_{k+1}, f_{k+2}, \ldots)$$ of a Fibonacci sequence $$(f_1, f_2, f_3, \ldots)$$ in a groupoid is itself a Fibonacci sequence in that groupoid.

\end{mylemma}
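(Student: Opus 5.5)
The plan is to show directly that the suffix coincides with the Fibonacci sequence generated by its first two terms. Set $g_i = f_{k+i-1}$ for $i \ge 1$, so that $(g_1, g_2, g_3, \ldots) = (f_k, f_{k+1}, f_{k+2}, \ldots)$. Let $a = f_k$ and $b = f_{k+1}$. The goal is to prove
$$(g_1, g_2, \ldots) = Fib(a, b).$$

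First, the two initial terms agree by construction: $g_1 = f_k = a$ and $g_2 = f_{k+1} = b$.

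Second, the recurrence is inherited. For every $n \ge 3$ we have $k + n - 1 \ge 3$, so the defining relation of $(f_1, f_2, \ldots)$ applies to index $k+n-1$ and gives
$$g_n = f_{k+n-1} = f_{k+n-3} * f_{k+n-2} = g_{n-2} * g_{n-1}.$$

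Third, I conclude by induction on $n$ that $g_n$ equals the $n$-th term of $Fib(a,b)$. The base cases $n = 1, 2$ are the first step. In the inductive step, both sequences obey the same recurrence $x_n = x_{n-2} * x_{n-1}$, so agreement at indices $n-2$ and $n-1$ forces agreement at $n$. This is simply the remark after the definition that every term of a Fibonacci sequence is determined by its first two terms.

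The only point needing care is the indexing. The recurrence in the definition is stated "for $n > 1$", but it is meant to apply only from the third term onward. The shifted relation above is used exactly in the range $n \ge 3$, so the argument stays within that range. The trivial case $k = 1$ is also covered, since the suffix is then the original sequence.
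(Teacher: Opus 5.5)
Your proof is correct and follows the same route as the paper: reindex the suffix as $g_m = f_{k-1+m}$, check that it inherits the recurrence, and conclude that it equals $Fib(f_k, f_{k+1})$. Restricting the shifted recurrence to $n \ge 3$ is slightly more careful than the paper, which states it for $m \ge 2$.
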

\begin{proof}

Let us denote $g_m = f_{k - 1 + m}$ and the groupoid of the desired sequence as $(G, *)$. Then
$$\widehat{g} = (g_1, g_2, g_3, \ldots)$$
is exactly the suffix as stated in the formulation. For any $m \ge 2$, we have:
$$g_m = f_{k - 1 + m} = f_{(k - 1 + m)-2} * f_{(k - 1 + m)-1} = f_{k - 1 + (m-2)} * f_{k - 1 + (m-1)} = g_{m-2} * g_{m-1}.$$
Thus, the sequence $\widehat{g}$ satisfies the Fibonacci recurrence relation and, consequently, is a Fibonacci sequence generated by the elements $g_1 = f_k$ and $g_2 = f_{k+1}$.

\end{proof}

The extreme case of embedding Fibonacci sequences in a groupoid is the embedding of all of them into a single sequence. This case is the main subject of the present article.

\begin{mydef}
    A \textbf{universal Fibonacci sequence} or \textbf{UFS} in a groupoid $(G, *)$ is an infinite sequence $\widehat{u}$, either singly or doubly infinite, such that

    \begin{enumerate}
        \item Any suffix $$\widehat{u}' = (u_k, u_{k+1}, \ldots) $$ of the sequence $\widehat{u}$ is a Fibonacci sequence, i.e., $\widehat{u}' = Fib(u_k, u_{k+1})$.
        \item Any Fibonacci sequence $Fib(a, b)$ in the groupoid $(G, *)$ is a suffix of $\widehat{u}$.
    \end{enumerate}
\end{mydef}

In other words, a universal Fibonacci sequence in a groupoid $(G, *)$ is exactly a sequence whose set of suffixes coincides with the set of Fibonacci sequences in $(G, *)$.

\begin{mydef}
    A groupoid in which a universal Fibonacci sequence exists will be called a \textbf{UFS-groupoid}.
\end{mydef}

\begin{mylemma}\label{onesideUFSisfc}
    Let $$\widehat{u} = (u_1, u_2, u_3, \ldots)$$ be a singly infinite universal Fibonacci sequence in a groupoid $(G, *)$. Then $\widehat{u}$ is the Fibonacci sequence $Fib(u_1, u_2)$ in $(G, *)$.

\end{mylemma}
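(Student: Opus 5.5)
The plan is to use part 1 of the definition of a universal Fibonacci sequence with $k = 1$. A singly infinite sequence $\widehat{u} = (u_1, u_2, u_3, \ldots)$ is, by the definition of a suffix, its own suffix starting at position $1$. Condition 1 of the definition then applies directly: the suffix $(u_1, u_2, \ldots)$ equals $Fib(u_1, u_2)$. That is essentially the whole argument.

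To make this self-contained, I would also verify the recurrence termwise. For each $n \ge 3$, take the suffix starting at position $n-2$. By condition 1 it equals $Fib(u_{n-2}, u_{n-1})$, and its third term is, by the definition of a Fibonacci sequence, $u_{n-2} * u_{n-1}$. Since this suffix is $(u_{n-2}, u_{n-1}, u_n, \ldots)$, we get $u_n = u_{n-2} * u_{n-1}$ for all $n \ge 3$.

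It follows that $\widehat{u}$ satisfies the defining recurrence with initial terms $u_1, u_2$. Because a Fibonacci sequence is uniquely determined by its first two terms, $\widehat{u} = Fib(u_1, u_2)$.

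I do not expect a genuine obstacle. The only point needing care is that the indexing of suffixes allows $k=1$, so that $\widehat{u}$ counts as a suffix of itself. The termwise check above avoids relying on that convention, since it uses only suffixes starting at positions $n-2 \ge 1$.
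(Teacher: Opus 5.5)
Your proposal is correct and matches the paper's argument: $\widehat{u}$ is a suffix of itself (starting at position $1$), so item 1 of the definition of a universal Fibonacci sequence immediately gives $\widehat{u} = Fib(u_1, u_2)$. One small remark: your termwise check does not actually avoid the $k=1$ convention, since the case $n = 3$ uses the suffix starting at position $1$, but that convention is part of the paper's definition of a suffix, so nothing is lost.
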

\begin{proof}
    This follows from the fact that $\widehat{u}$ is a suffix of itself and from item 1. in the definition of a universal Fibonacci sequence.
\end{proof}

\begin{mylemma}\label{twoel}
    Let a universal Fibonacci sequence $\widehat{u}$ exist in a groupoid $(G, *)$. Then any ordered pair of elements $(a, b) \in G^2$ forms a substring in $\widehat{u}$:

$$\widehat{u} = (\ldots, u_k, a, b, u_{k+3}, u_{k+4}, \ldots).$$

\end{mylemma}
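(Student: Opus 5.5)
The plan is to use item 2 of the definition of a universal Fibonacci sequence directly. Fix an arbitrary ordered pair $(a, b) \in G^2$ and form the Fibonacci sequence
$$Fib(a, b) = (a, b, a * b, b * (a * b), \ldots),$$
which is well defined because the recurrence fixes every term once the first two are given. By item 2, $Fib(a, b)$ is a suffix of $\widehat{u}$. So there is a position $m$, an integer when $\widehat{u}$ is doubly infinite or a natural number when it is singly infinite, such that $(u_m, u_{m+1}, u_{m+2}, \ldots) = Fib(a, b)$.

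Next I compare the first two terms: $u_m = a$ and $u_{m+1} = b$. Hence $(a, b)$ forms a substring of $\widehat{u}$ starting at position $m$. Setting $k = m - 1$ gives the displayed form $\widehat{u} = (\ldots, u_k, a, b, u_{k+3}, \ldots)$, where the indices follow the statement's convention.

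The only point needing care is the boundary case. If $\widehat{u}$ is singly infinite and $m = 1$, there is no term $u_k$ before $a$. The leading "$\ldots, u_k$" in the statement should then be read as possibly empty; I would remark this explicitly. Alternatively, one can index singly infinite sequences from $0$, consistent with the paper's conventions. Beyond this there is no real obstacle: the lemma is an immediate unwinding of the definition. Item 1 of the definition is not needed here.
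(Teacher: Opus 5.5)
Your proposal is correct and follows essentially the same argument as the paper: by item 2 of the definition, $Fib(a,b)$ is a suffix of $\widehat{u}$, so $(a,b)$ occurs at the start of that suffix. Your remark about the empty prefix in the singly infinite case is a harmless clarification that the paper leaves implicit.
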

\begin{proof}
    This follows from the fact that, by the definition of a universal Fibonacci sequence, $Fib(a, b)$ is a suffix of $\widehat{u}$. Therefore, for some $k$, we have:
$$\widehat{u} = (\ldots, u_k, Fib(a, b)) = (\ldots, u_k, a, b, u_{k+3}, u_{k+4}, \ldots).$$

\end{proof}

\begin{mylemma}
     Let a universal Fibonacci sequence $\widehat{u}$ of a groupoid $(G, *)$ be given. Then this groupoid is uniquely determined by the universal Fibonacci sequence.

\end{mylemma}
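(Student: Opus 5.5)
The plan is to read off every product $a * b$ directly from $\widehat{u}$, using Lemma \ref{twoel} for existence of the relevant position and the Fibonacci recurrence for its value. The set $G$ itself is recovered as the set of terms of $\widehat{u}$. Indeed, by Lemma \ref{twoel} each $a \in G$ occurs in $\widehat{u}$, taking the pair $(a,a)$. Conversely, every term of $\widehat{u}$ lies in $G$ by definition.

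For the operation, fix $a, b \in G$. By Lemma \ref{twoel} there is an index $k$ with $u_{k+1} = a$ and $u_{k+2} = b$. By item 1 of the definition of a universal Fibonacci sequence, the suffix starting at position $k+1$ is $Fib(a,b)$. Its third term therefore equals $a * b$, so $u_{k+3} = a * b$. Thus the product of $a$ and $b$ equals the term immediately following any occurrence of the consecutive pair $(a,b)$ in $\widehat{u}$. In particular, all such occurrences are followed by the same element. Hence $a * b$ is determined solely by the sequence $\widehat{u}$ and not by any additional data.

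Finally, suppose two groupoids $(G,*)$ and $(G',*')$ share the same universal Fibonacci sequence $\widehat{u}$. Then $G = G'$, since both equal the set of terms of $\widehat{u}$. By the previous paragraph, $a * b = u_{k+3} = a *' b$ for every pair $(a,b)$, so the two groupoids coincide.

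There is no real obstacle here. The only point needing care is that, for doubly infinite $\widehat{u}$, item 1 must be applied to the suffix starting at the position of $a$. The paper's notation $\widehat{u} = (\ldots, u_k, a, b, u_{k+3}, \ldots)$ from Lemma \ref{twoel} handles this uniformly in both the singly and doubly infinite cases.
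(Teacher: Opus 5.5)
Your proof is correct and follows essentially the same route as the paper's. Both locate $(a,b)$ in $\widehat{u}$ via Lemma \ref{twoel}, note that the suffix starting there is $Fib(a,b)$, and read off $a*b$ as the next term, which is therefore the same for every occurrence. Your explicit recovery of $G$ as the set of terms of $\widehat{u}$ is a small addition that the paper leaves implicit.
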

\begin{proof}
    By Lemma \ref{twoel}, any pair $(a, b) \in G^2$ is a substring in $\widehat{u}$, that is,
$$\widehat{u} = (\ldots, a, b, c, \ldots)$$
for some $c$. By the definition of a universal Fibonacci sequence, any suffix of $\widehat{u}$ is a Fibonacci sequence in the same groupoid, hence
$$Fib(a, b) = (a, b, c, \ldots)$$
and $a * b = c$. Moreover, due to the uniqueness of $Fib(a, b)$ determined by its first two elements, this definition of the operation does not depend on the chosen occurrence of the substring $(a, b)$.

\end{proof}

\begin{mylemma}\label{card}
    Any UFS-groupoid is either finite or countable.

\end{mylemma}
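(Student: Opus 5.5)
The plan is to use Lemma \ref{twoel}: it lets us send every element of $G$ to a position in $\widehat{u}$ where that element occurs. Since the set of positions is countable, $G$ will then be at most countable.

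First, we fix an indexing of $\widehat{u}$. If $\widehat{u}$ is singly infinite, the positions form $\mathbb{N}$. If it is doubly infinite, we fix a shift so that the positions form $\mathbb{Z}$. In either case the index set $I$ is countable.

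Second, we show that every element of $G$ occurs as a term of $\widehat{u}$. Take any $a \in G$. The ordered pair $(a, a) \in G^2$ forms a substring of $\widehat{u}$ by Lemma \ref{twoel}, so there is some $k \in I$ with $u_k = a$. Hence the map $I \to G$, $k \mapsto u_k$, is surjective.

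Third, a countable set surjects onto $G$, so $|G| \le |I| \le \aleph_0$. To make this choice-free, we can define an injection $G \to I$ directly. In the singly infinite case, send $a$ to the least $k \in \mathbb{N}$ with $u_k = a$. In the doubly infinite case, send $a$ to the $k$ with $u_k = a$ of least $|k|$, breaking ties by taking the nonnegative one. In both cases $G$ is finite or countable.

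None of these steps is a real obstacle. The only point needing care is that the argument must treat singly and doubly infinite $\widehat{u}$ uniformly, which the countability of $\mathbb{N}$ and $\mathbb{Z}$ handles. We should also note that $G$ is nonempty by definition of a groupoid, although even an empty $G$ would be finite.
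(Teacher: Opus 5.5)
Your proof is correct and follows essentially the same route as the paper. Both use Lemma \ref{twoel} to map $G$ into the countable set of positions of $\widehat{u}$: the paper counts the substrings corresponding to all pairs in $G^2$, while you use only the pairs $(a,a)$, which is enough to place every element of $G$. Your explicit choice-free injection is a minor refinement that the paper does not bother with.
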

\begin{proof}
    If a groupoid $(G, *)$ has a universal Fibonacci sequence $\widehat{u}$, then by Lemma \ref{twoel}, any pair $(a, b) \in G^2$ is a substring of $\widehat{u}$. In a singly or doubly infinite sequence, there can be at most a countable number of such substrings, and therefore $G$ is at most countable.

\end{proof}

It turns out that specifying a universal Fibonacci sequence is equivalent to specifying the UFS-groupoid itself: we can uniquely reconstruct the groupoid operation from the universal Fibonacci sequence.

\begin{mythm}\label{uniqueinv}
    An infinite sequence $\widehat{u}$, either singly or doubly infinite, of elements of a set $G$ is a universal Fibonacci sequence for some groupoid $(G, *)$ if and only if

    \begin{itemize}
    \item Any ordered pair $(a, b) \in G^2$ is a substring of $\widehat{u}$.
    \item Any two suffixes of the sequence $\widehat{u}$ that start with the same substring $(a, b)$ coincide.
\end{itemize}

    Moreover, we can uniquely define the operation $*$ from $\widehat{u}$: $a * b = c$ if and only if $\widehat{u} = (\ldots, a, b, c, \ldots)$, where $c$ is uniquely determined and does not depend on the chosen occurrence of the substring $(a, b)$.

\end{mythm}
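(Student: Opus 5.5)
We prove the two directions separately and then read off the formula for $*$.

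\emph{Necessity.} Suppose $\widehat{u}$ is a universal Fibonacci sequence of some groupoid $(G,*)$. The first condition is exactly Lemma \ref{twoel}. For the second, take two suffixes of $\widehat{u}$ that both start with the substring $(a,b)$. By item 1 of the definition of a universal Fibonacci sequence, each of them equals $Fib(a,b)$. Since a Fibonacci sequence is determined by its first two terms, the two suffixes coincide.

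\emph{Sufficiency.} Suppose $\widehat{u}$ satisfies both conditions. We first define the operation. For $a,b\in G$, the first condition gives an occurrence $(\ldots,a,b,c,\ldots)$ in $\widehat{u}$, and we set $a*b=c$. This does not depend on the occurrence: two occurrences of $(a,b)$ start two suffixes beginning with $(a,b)$, which coincide by the second condition, so in particular their third terms agree. Hence $*$ is a well-defined binary operation on $G$.

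Next we check item 1 of the definition. Take a suffix $(u_k,u_{k+1},\ldots)$. For every $m\ge k+2$, the triple $(u_{m-2},u_{m-1},u_m)$ is consecutive in $\widehat{u}$, so $u_m=u_{m-2}*u_{m-1}$ by the definition of $*$. By induction on $m$, the suffix therefore equals $Fib(u_k,u_{k+1})$.

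Then we check item 2. Given $Fib(a,b)$, the first condition provides a position $k$ with $u_k=a$ and $u_{k+1}=b$. By the previous step, the suffix starting at $k$ equals $Fib(a,b)$, so $Fib(a,b)$ is a suffix of $\widehat{u}$.

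\emph{Uniqueness of the operation.} If $\widehat{u}$ is a universal Fibonacci sequence for some operation $*'$ on $G$, then by the lemma preceding Lemma \ref{card} we have $a*'b=c$ exactly when $(a,b,c)$ occurs in $\widehat{u}$. Hence $*'=*$, which is the final claim of the theorem.

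The only delicate point is the well-definedness of $*$. This is precisely what the second condition supplies, since it forces the third term after any occurrence of $(a,b)$ to be the same; the remaining steps are routine.
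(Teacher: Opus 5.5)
Your proof is correct and follows the same route as the paper. Necessity comes from Lemma \ref{twoel} together with the fact that $Fib(a,b)$ is determined by $(a,b)$, and sufficiency comes from defining $a*b$ as the term following an occurrence of $(a,b)$, with well-definedness supplied by the second condition. You are more explicit than the paper, which simply asserts that $\widehat{u}$ is then the universal Fibonacci sequence, whereas you check both items of the definition and the uniqueness of $*$.
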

\begin{proof}
    Let a groupoid $(G, *)$ with a universal Fibonacci sequence $\widehat{u}$ be given. By Lemma \ref{twoel}, any $(a, b) \in G^2$ is a substring of $\widehat{u}$, that is,
$$\widehat{u} = (\ldots, u_{k-1}, a, b, c, \ldots) = (\ldots, u_{k-1}, Fib(a, b), \ldots)$$
for some $c$ and $k$. Thus, $Fib(a, b) = (a, b, c, \ldots)$ and $a * b = c$. Since $Fib(a, b)$ is uniquely determined by $(a, b)$, all suffixes of $\widehat{u}$ starting with $(a, b)$ coincide. Therefore, $a * b$ is uniquely defined.

    Conversely, let a sequence $\widehat{u}$ be given with properties satisfying the conditions of the theorem. Define a groupoid $(G, *)$ by setting $a * b = c$ if and only if $\widehat{u} = (\ldots, a, b, c, \ldots)$. By the conditions of the theorem, this defines the groupoid uniquely and consistently. Moreover, $\widehat{u}$ will then be precisely the universal Fibonacci sequence of this groupoid.

\end{proof}

For the purposes of the subsequent proofs, we present two simple lemmas from the theory of infinite sequences.

\begin{mylemma}\label{cissuf}
    A singly infinite sequence $\widehat{x}$ is periodic if and only if there exists a positive integer $n$ such that
$$\widehat{x} = (\underbrace{\ldots}_n, \widehat{x}).$$
Moreover, the minimal such $n$ coincides with the period of the sequence $\widehat{x}$.
\end{mylemma}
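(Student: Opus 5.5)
We need to prove Lemma \ref{cissuf}: a singly infinite sequence $\widehat{x} = (x_1, x_2, \ldots)$ is periodic if and only if there is a positive integer $n$ with $\widehat{x} = (\underbrace{\ldots}_n, \widehat{x})$. Moreover, the minimal such $n$ equals the period. The plan is to translate the notation into an index condition. The statement $\widehat{x} = (\underbrace{\ldots}_n, \widehat{x})$ means that the suffix of $\widehat{x}$ starting at position $n+1$ coincides with $\widehat{x}$ itself. Comparing term by term, this says exactly that $x_{n+k} = x_k$ for all $k \ge 1$. Once this reformulation is in place, both directions reduce to comparing it with the definition of periodicity.

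For the forward direction, suppose $\widehat{x}$ is periodic with period $n$. By definition every element reappears after $n$ positions, that is, $x_{k+n} = x_k$ for every $k \ge 1$. This is precisely the index condition above, so the suffix starting at position $n+1$ equals $\widehat{x}$.

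For the converse, suppose $x_{k+n} = x_k$ for all $k$. Then every element of $\widehat{x}$ occurs again after $n$ positions, so $\widehat{x}$ is periodic by definition. Its period is at most $n$.

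For the minimality claim, let $p$ be the period and let $S$ be the set of positive integers $n$ satisfying the suffix condition. The forward direction gives $p \in S$, so $\min S \le p$. The converse gives that every $n \in S$ satisfies $n \ge p$, since $p$ is the least shift for which $x_{k+n} = x_k$ holds for all $k$. Hence $\min S = p$.

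The only point needing care is the reading of "every element occurs again after $n$ positions." It must be interpreted as the uniform condition $x_{k+n} = x_k$ for all $k$, not merely that each value reappears somewhere later. With that reading the argument is a direct unwinding of definitions, and there is no genuine obstacle.
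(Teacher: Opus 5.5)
Your proof is correct and follows the same approach as the paper: both directions unwind the definition of periodicity, reading the suffix condition as $x_{k+n} = x_k$ for all $k \ge 1$. The minimality claim then holds because the period is by definition the least such shift. Your explicit index reformulation and the two-sided comparison of $\min S$ with the period only spell out what the paper states in a line.
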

\begin{proof}

If
$$\widehat{x} = (\underbrace{\ldots}_n, \widehat{x}),$$
then any element of the sequence occurs again after a shift to the right by $n$ positions.

If a sequence $\widehat{x}$ is periodic with period $n$, then
$$\widehat{x} = (\underbrace{x_1, x_2, \ldots, x_n}_n, \underbrace{x_1, x_2, \ldots, x_n}_n, \underbrace{x_1, x_2, \ldots, x_n}_n, \ldots)
= (\underbrace{x_1, x_2, \ldots, x_n}_n, \widehat{x}).$$

Moreover, the minimal such $n$ in the statement is exactly the period by definition.

\end{proof}

\begin{mylemma}\label{acs}
  Let an infinite sequence $\widehat{x}$, either singly or doubly infinite, be periodic with period $n$ and a cycle of the form
$$(x_1, \ldots, x_n).$$
Then any suffix of $\widehat{x}$ is also periodic, with period $n$ and a cycle of the form
$$(x_k, \ldots, x_n, x_1, \ldots, x_{k-1}),$$
obtained by a cyclic shift of the cycle of $\widehat{x}$.
\end{mylemma}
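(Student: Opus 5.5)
The plan is to work directly from the definition of periodicity: every term recurs $n$ positions later, with $n$ the period and the cycle read off from the sequence. I will treat the singly infinite and doubly infinite cases together. Index $\widehat{x}$ so that the displayed cycle $(x_1, \ldots, x_n)$ occupies positions $1, \ldots, n$; then $x_{i+n} = x_i$ for every index $i$ in the range of $\widehat{x}$, so $x_i$ depends only on $i \bmod n$.

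Next, fix a suffix $\widehat{x}' = (x_k, x_{k+1}, \ldots)$. By the periodicity relation we may replace $k$ by its residue in $\{1, \ldots, n\}$ without changing the values. Write $y_m = x_{k-1+m}$. Then $y_{m+n} = x_{k-1+m+n} = x_{k-1+m} = y_m$, so $\widehat{x}'$ is periodic with some period dividing $n$. Its first $n$ terms are $x_k, \ldots, x_n, x_1, \ldots, x_{k-1}$, by reducing indices modulo $n$. So once we know the period is exactly $n$, the cycle is the stated cyclic shift.

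The main (mild) obstacle is minimality: we must show the period of $\widehat{x}'$ is not a proper divisor $d < n$. I would argue by contradiction. Suppose $y_{m+d} = y_m$ for all $m \ge 1$. Take any index $i$ of $\widehat{x}$ and choose $t$ large enough that $i + tn \ge k$. Then
$$x_{i+d} = x_{i+d+tn} = y_{i+tn-k+1+d} = y_{i+tn-k+1} = x_{i+tn} = x_i.$$
So every term of $\widehat{x}$ recurs after $d$ positions, contradicting that $n$ is the minimal period of $\widehat{x}$.

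For the singly infinite case one could alternatively invoke Lemma \ref{cissuf}, but the direct index argument above covers both cases uniformly, so I will use it.
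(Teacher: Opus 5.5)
Your proof is correct and takes the same route as the paper: the suffix is the infinite repetition of the rotated cycle $(x_k, \ldots, x_n, x_1, \ldots, x_{k-1})$. Your extra argument that the suffix cannot have a smaller period $d<n$ (because $d$-periodicity of the suffix would propagate back to all of $\widehat{x}$) supplies a minimality step that the paper's proof leaves implicit.
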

\begin{proof}
    A periodic sequence $\widehat{x}$ has the form
$$(\ldots, x_1, \ldots, x_n, x_1, \ldots, x_n, \ldots).$$

    If we take any position and consider the suffix starting from it, then it has the form
$$(x_k, \ldots, x_n, x_1, \ldots, x_n, x_1, \ldots, x_n, \ldots)$$
from which it follows that this suffix is periodic with period $n$ and cycle
$$(x_k, \ldots, x_n, x_1, \ldots, x_{k-1}).$$
\end{proof}

Repeats of pairs in universal Fibonacci sequences naturally lead to periodicity.

\begin{mylemma}\label{twopairs}
    In a universal Fibonacci sequence $\widehat{u}$ of a groupoid $(G, *)$, the same pair $(a, b) \in G^2$ forms a substring at two different positions separated by a distance $n$ if and only if $Fib(a, b)$ is a periodic sequence, with the minimal such $n$ being the period of $Fib(a, b)$.

\end{mylemma}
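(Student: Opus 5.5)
The plan is to prove both directions by reducing everything to suffix identities, using Lemma \ref{cissuf} to translate periodicity of $Fib(a,b)$ into a self-suffix relation.

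\emph{Forward direction.} Suppose the pair $(a,b)$ occurs in $\widehat{u}$ starting at positions $t$ and $t+n$. By item 1 of the definition of a universal Fibonacci sequence, the suffix of $\widehat{u}$ starting at $t$ is $Fib(a,b)$. The suffix starting at $t+n$ is also $Fib(a,b)$, since by Theorem \ref{uniqueinv} suffixes beginning with the same pair coincide. The suffix at $t+n$ is obtained from the suffix at $t$ by deleting its first $n$ terms. Hence
$$Fib(a,b) = (\underbrace{\ldots}_n, Fib(a,b)),$$
and Lemma \ref{cissuf} shows that $Fib(a,b)$ is periodic.

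\emph{Converse direction.} Suppose $Fib(a,b)$ is periodic with period $n$. By Lemma \ref{twoel}, the pair $(a,b)$ occurs in $\widehat{u}$ at some position $t$, and the suffix starting there is $Fib(a,b)$. By Lemma \ref{cissuf}, $Fib(a,b) = (x_1,\ldots,x_n, Fib(a,b))$. Therefore the terms of $\widehat{u}$ at positions $t+n$ and $t+n+1$ are $a$ and $b$, giving a second occurrence at distance $n$. The same argument works for any multiple of the period.

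\emph{Minimality.} The forward argument shows that every distance $m$ between two occurrences satisfies $Fib(a,b) = (\underbrace{\ldots}_m, Fib(a,b))$. By Lemma \ref{cissuf}, the minimal such $m$ is the period, so every distance is at least the period. The converse exhibits a pair of occurrences at distance exactly equal to the period. Hence the minimal distance equals the period.

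The main obstacle is bookkeeping for doubly infinite $\widehat{u}$. In that case occurrences could lie to the left of the chosen $t$, so the argument must always take the earlier occurrence as the base point. Suffixes are defined uniformly for singly and doubly infinite sequences, so no further case split should be needed.
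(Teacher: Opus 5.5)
Your proof is correct and follows essentially the same route as the paper: you identify the suffixes at both occurrences with $Fib(a,b)$, derive $Fib(a,b) = (\underbrace{\ldots}_n, Fib(a,b))$, and apply Lemma~\ref{cissuf}. Your explicit converse and minimality argument (every distance between occurrences is a period, and the period itself is realized as a distance) spells out what the paper compresses into ``the converse follows similarly.''
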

\begin{proof}
    Suppose the pair $(a, b)$ occurs in $\widehat{u}$ at positions $t$ and $t + n$. Then
$$\widehat{u} = (\ldots, u_{t+n-1}, a, b, \ldots) = (\ldots, u_{t+n-1}, Fib(a, b)) = (\ldots, u_{t-1}, Fib(a, b))$$
and
$$Fib(a, b) = (\underbrace{\ldots}_n, Fib(a, b)).$$
By Lemma \ref{cissuf}, the sequence $Fib(a, b)$ is periodic, with the minimal such $n$ coinciding with its period. The converse follows similarly from Lemma \ref{cissuf}.
\end{proof}

\begin{mylemma}\label{persuffix}
    If in a groupoid $(G, *)$ with a universal Fibonacci sequence $\widehat{u}$, the sequence $Fib(a, b)$ for some pair $(a, b) \in G^2$ is periodic, then the sequence $\widehat{u}$ is either periodic or strictly preperiodic, with the period of $\widehat{u}$ and its cycle/periodic part coinciding, up to a cyclic shift, with the period and cycle of the sequence $Fib(a, b)$.
\end{mylemma}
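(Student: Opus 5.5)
Since $\widehat{u}$ is universal, $Fib(a,b)$ is a suffix of $\widehat{u}$, so we can write $\widehat{u} = (\ldots, u_{k-1}, Fib(a,b))$ for some position $k$. By assumption $Fib(a,b)$ is periodic, say with period $n$ and cycle $(c_1,\ldots,c_n)$. So $\widehat{u}$ has a periodic suffix, and by definition it is therefore either periodic or strictly preperiodic. What remains is to show that the period and the cycle (respectively, the periodic part) of $\widehat{u}$ agree with those of $Fib(a,b)$ up to a cyclic shift.

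\emph{Case 1: $\widehat{u}$ is strictly preperiodic.} Let $(x_k,\ldots)$ be the maximal periodic suffix of $\widehat{u}$, and let $m$ be its period. The two periodic suffixes $Fib(a,b)$ and the maximal periodic suffix are both suffixes of the same sequence, so one is a suffix of the other. If $Fib(a,b)$ is the longer one, it is a periodic suffix containing the maximal one, which by maximality forces the two to start at the same place. Otherwise $Fib(a,b)$ is a suffix of the maximal periodic suffix. In that case Lemma \ref{acs} says $Fib(a,b)$ is periodic with period $m$ and with a cycle that is a cyclic shift of the periodic part. The period is well defined (it is the minimal one), so $n=m$ and the cycles agree up to a cyclic shift.

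\emph{Case 2: $\widehat{u}$ is periodic.} Say $\widehat{u}$ has period $m$. Then $Fib(a,b)$ is a suffix of a periodic sequence, and Lemma \ref{acs} gives that $Fib(a,b)$ has period $m$ and a cycle obtained by a cyclic shift of the cycle of $\widehat{u}$. Hence $n=m$ and the cycles coincide up to a cyclic shift.

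\textbf{Main subtlety.} The step I expect to need the most care is the claim that the minimal period of the suffix equals the minimal period of the whole sequence. Lemma \ref{acs} as stated asserts this. If it only yielded that $m$ is \emph{a} period of the suffix, I would argue directly for the reverse inequality. For the periodic case: if the suffix had a smaller period $n<m$, then, since the suffix contains a full copy of the cycle of $\widehat{u}$ and $\widehat{u}$ is periodic, every element of $\widehat{u}$ would also repeat after $n$ positions. This contradicts the minimality of $m$. In the strictly preperiodic case the same argument works, with the periodic part playing the role of the cycle of $\widehat{u}$.
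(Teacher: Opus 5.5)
Your proof is correct and follows the same route as the paper: since $Fib(a,b)$ is a suffix of $\widehat{u}$, the sequence $\widehat{u}$ is not aperiodic, and Lemma \ref{acs} transfers the period and the cycle. Your split into the periodic and strictly preperiodic cases, and your explicit check that a suffix keeps the same minimal period, only fill in details that the paper leaves implicit in Lemma \ref{acs}.
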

\begin{proof}
    By the definition of a universal Fibonacci sequence, the sequence $Fib(a, b)$ is its suffix. Hence, $\widehat{u}$ has a periodic suffix and is not aperiodic. The second part of the statement follows from Lemma \ref{acs}.
\end{proof}
\begin{mylemma}\label{allper}
    If in a groupoid $(G, *)$ with a universal Fibonacci sequence $\widehat{u}$, the sequence $Fib(a, b)$ is periodic for every pair $(a, b) \in G^2$, then the sequence $\widehat{u}$ is periodic, and its period and cycle, up to a cyclic shift, coincide with the period and cycle of any sequence $Fib(a, b)$.
\end{mylemma}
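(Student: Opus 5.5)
Since every $Fib(a,b)$ is assumed periodic, we may fix any pair $(a,b)\in G^2$ and apply Lemma \ref{persuffix}. This shows that $\widehat{u}$ is either periodic or strictly preperiodic, and that its period and cycle (or periodic part) agree, up to a cyclic shift, with those of $Fib(a,b)$. The whole task is therefore to exclude the strictly preperiodic case. After that, the claim about "any" $Fib(a,b)$ follows at once: each $Fib(a,b)$ is a suffix of $\widehat{u}$, so Lemma \ref{acs} gives it the same period and a cyclically shifted cycle.

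To exclude strict preperiodicity, suppose $\widehat{u}$ is strictly preperiodic. Then some position $k$ of $\widehat{u}$ lies in the preperiodic part and is followed by a position $k+1$. Consider the pair $(u_k,u_{k+1})$. By condition 1 in the definition of a universal Fibonacci sequence, the suffix of $\widehat{u}$ starting at position $k$ equals $Fib(u_k,u_{k+1})$. By hypothesis this sequence is periodic, so the suffix starting at $k$ is a periodic suffix. This contradicts the choice of $k$, since the preperiodic part consists exactly of the elements not contained in any periodic suffix.

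One detail needs checking. If $\widehat{u}$ is doubly infinite, every position is followed by another position, so a preperiodic position $k$ always has a successor. If $\widehat{u}$ is singly infinite and starts at $u_1$, any preperiodic position still has a successor, because the sequence is infinite to the right.

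Next we verify that "every suffix is periodic" really means $\widehat{u}$ is periodic. In the singly infinite case this is immediate, since $\widehat{u}$ is its own suffix starting at $u_1$. In the doubly infinite case, all suffixes are periodic, and each has the same period $n$ because each is a suffix of the others or contains them as suffixes (Lemma \ref{acs}). Given any position $i$, the suffix starting at $i$ is periodic with period $n$, so $u_{i+n}=u_i$. Hence $\widehat{u}$ is periodic with period $n$.

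The main obstacle is this last step: ensuring that in the doubly infinite case all suffix periods coincide, so that a single global $n$ exists. I would handle it by noting that any two suffixes are nested, one being a suffix of the other. By Lemma \ref{acs} the inner suffix has the same period as the outer one, since minimal periods of a periodic sequence and of its suffix agree. So all suffixes share one period $n$, which gives the result.
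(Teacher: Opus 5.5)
Your proposal is correct and follows the paper's argument. Every suffix of $\widehat{u}$ is some $Fib(u_k,u_{k+1})$ and hence periodic, which rules out the aperiodic and strictly preperiodic cases, and the claim about periods and cycles then follows from Lemmas \ref{persuffix} and \ref{acs}. Your extra check for the doubly infinite case, that nested suffixes all share a single period (via Lemma \ref{acs}), fills in a step the paper's one-line proof leaves implicit.
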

\begin{proof}
    By definition of a universal Fibonacci sequence, any suffix of $\widehat{u}$ is a Fibonacci sequence of the form $Fib(a, b)$, and hence is periodic. Therefore, the entire sequence $\widehat{u}$ cannot be aperiodic or strictly preperiodic, and is thus periodic. The rest of the statement follows from Lemma \ref{persuffix}.
\end{proof}

The proven lemmas allow us to describe the repetition of pairs in arbitrary universal Fibonacci sequences.

\begin{mythm}\label{norepeat}
    Let $\widehat{u}$ be a universal Fibonacci sequence in a groupoid $(G, *)$. Then
\begin{enumerate}
    \item If $\widehat{u}$ is periodic with cycle 
    $$(u_1, \ldots, u_k),$$ 
    then the pairs 
    $$(u_1, u_2), \ldots, (u_{k-1}, u_k), (u_k, u_1)$$ 
    appearing as substrings in the cycle, considered as a cyclically ordered set, are pairwise distinct and together form $G^2$.
    
    \item If $\widehat{u}$ is strictly preperiodic with preperiodic part 
    $$(\ldots, u_{k-2}, u_{k-1})$$ 
    and periodic part 
    $$(u_k, \ldots, u_{k+t}),$$ 
    then all pairs belong to the following three disjoint classes:

    \begin{enumerate}
    \item Substrings in the preperiodic part, that is,
    $$\ldots, (u_{k-3}, u_{k-2}), (u_{k-2}, u_{k-1});$$
    \item The pair at the junction of the preperiodic and periodic parts, that is,
    $$(u_{k-1}, u_k);$$
    \item Substrings in the periodic part considered as a cyclically ordered set, that is,
    $$(u_k, u_{k+1}), \ldots, (u_{k+t-1}, u_{k+t}), (u_{k+t}, u_k).$$
\end{enumerate}

All pairs are pairwise distinct and together form $G^2$.

\item If $\widehat{u}$ is aperiodic, then the pairs of the form $(u_i, u_{i+1})$, which are substrings in $\widehat{u}$, are pairwise distinct and together form $G^2$.

\end{enumerate}

    Conversely, any singly or doubly infinite sequence $\widehat{u}$ of elements from the set $G$ that satisfies one of the conditions (1)-(3) will be a universal Fibonacci sequence of some groupoid over $G$.

\end{mythm}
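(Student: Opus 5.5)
The plan is to deduce all three cases from Lemma \ref{twoel} (every pair of $G^2$ occurs) together with Lemma \ref{twopairs} (a repeated pair at distance $n$ forces periodicity of the corresponding Fibonacci sequence with period dividing $n$, the minimal distance being the period). The \emph{covering} part, that the listed pairs exhaust $G^2$, is immediate in every case: by Lemma \ref{twoel} each $(a,b)\in G^2$ is a substring $(u_i,u_{i+1})$ of $\widehat{u}$. In the periodic case such a pair is, after reduction of indices modulo $k$, one of the cyclic pairs of the cycle. In the strictly preperiodic case it lies either in the preperiodic part, at the junction, or inside the periodic suffix, and in the last situation it again reduces to a cyclic pair of the periodic part. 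The work is therefore in proving \emph{distinctness}.

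For distinctness I argue by contradiction. Suppose two of the listed pairs coincide, say $(u_i,u_{i+1})=(u_j,u_{j+1})$ with $i<j$ as positions in $\widehat{u}$.
\begin{enumerate}
\item In the periodic case, take $i,j$ within one cycle, so $0<j-i<k$. Lemma \ref{twopairs} gives that $Fib(u_i,u_{i+1})$ has period at most $j-i<k$. By Lemma \ref{persuffix}, the period of $\widehat{u}$ then equals this period, which is less than $k$, a contradiction.
\item In the aperiodic case, any repetition makes $Fib(u_i,u_{i+1})$ periodic by Lemma \ref{twopairs}, hence gives $\widehat{u}$ a periodic suffix, contradicting aperiodicity.
\item In the strictly preperiodic case, if $(u_i,u_{i+1})=(u_j,u_{j+1})$ with $i<j$, then the suffix starting at $i$ equals the suffix starting at $j$ and is periodic. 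So $u_i$ belongs to a periodic suffix. By the maximality in the definition of the preperiodic/periodic split this means $i\ge k$, which excludes the preperiodic pairs and the junction pair (index $k-1$). Pairs inside the periodic part are handled exactly as in item 1, using the minimal period.
\end{enumerate}
The main subtlety is the third case, which requires the precise meaning of ``preperiodic part'': $u_{k-1}$ is not the start of any periodic suffix, i.e.\ $k$ is minimal. I would make this explicit. Suffixes from $k$ on are periodic, while the suffix from $k-1$ is not, since otherwise it would be a longer periodic suffix. Hence the pair $(u_{k-1},u_k)$ cannot recur later. It also cannot equal the cyclic pair $(u_{k+t},u_k)$: that pair occurs at position $k+t$, so a coincidence would make the suffix at $k-1$ a shift of the suffix at $k+t$, hence periodic.

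For the converse, I verify the two conditions of Theorem \ref{uniqueinv}. Condition one is the covering assumption. For condition two, consider two occurrences of a pair $(a,b)$. In case (3), and for preperiodic or junction pairs in case (2), the occurrence is unique, so the condition holds trivially. In cases (1) and (2), any two occurrences inside the periodic part differ by a multiple of the period, because the pairs of the cycle are pairwise distinct. By Lemma \ref{acs} the suffixes starting there then coincide.
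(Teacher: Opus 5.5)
Your proposal is correct and follows essentially the same route as the paper. Covering comes from Lemma \ref{twoel}, distinctness is proved by contradiction via Lemmas \ref{twopairs} and \ref{persuffix} (a repeated pair forces the suffix at its first occurrence to be periodic, which places that occurrence inside the periodic part), and the converse goes through Theorem \ref{uniqueinv}. Your use of the minimality of $k$ for the junction pair, and your observation in the converse that repeated occurrences in the periodic part differ by a multiple of the period, are if anything stated more explicitly than in the paper.
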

\begin{proof}

    We begin by proving the main statement.

    In each case, we need to show that the set of substrings of length $2$ occurring in $\widehat{u}$ are pairwise distinct and together constitute $G^2$. By Lemma \ref{twoel}, it suffices to prove that the given substrings are pairwise distinct and that no other substrings of length $2$ , besides those in the tuple, can occur in $\widehat{u}$. We will prove this for each case.

    \begin{enumerate}
        \item If the sequence $\widehat{u}$ is periodic with cycle $$(u_1, \ldots, u_k)$$ then it consists of infinite repetitions of this cycle, and all substrings of length $2$ are exactly $$(u_1, u_2), \ldots, (u_{k - 1}, u_k), (u_k, u_1).$$
        Moreover, none of these pairs can repeat, because this would imply that some pair occurs as a substring twice in $\widehat{u}$ at positions separated by less than $k$, which, by Lemmas \ref{twopairs} and \ref{persuffix}, would mean that $\widehat{u}$ has a period smaller than $k$, a contradiction.
        \item If $\widehat{u}$ is strictly preperiodic with preperiodic part $$(\ldots, u_{k - 2}, u_{k - 1})$$ and periodic part $$(u_k, \ldots, u_{k + t})$$ then

        \begin{enumerate}
            \item All substrings of length $2$ in the preperiodic part are distinct, similarly to the aperiodic case.
            \item At the junction of the preperiodic part and the periodic part, there is exactly one substring of length $2$.
            \item All substrings of length $2$ in the periodic part, considered as a cyclically ordered set, are distinct, similarly to the periodic case.
        \end{enumerate}

         If two pairs from different items $2(a)$, $2(b)$, $2(c)$ in the statement of the theorem coincide, then one of them, some $(u_{k - l}, u_{k - l + 1})$, does not occur as a substring in the periodic part. However, since this pair is repeated, by Lemma \ref{twopairs} the suffix $Fib(u_{k - l}, u_{k - l + 1})$ is periodic, and hence $(u_{k - l}, u_{k - l + 1})$ occurs as a substring in the periodic suffix, a contradiction. Moreover, from the structure of strictly preperiodic sequences it follows that no other substrings of length $2$ can occur in $\widehat{u}$.
         \item If $\widehat{u}$ is aperiodic, then all substrings of length $2$ are distinct, since otherwise, by Lemmas \ref{twopairs} and \ref{persuffix}, there would exist a periodic suffix in $\widehat{u}$.
    \end{enumerate}

    Now we prove the converse.

    Let an arbitrary sequence $\widehat{u}$ satisfy one of the items of the condition of the theorem. Then all elements of $G^2$ occur in it as substrings. Suppose that two distinct suffixes coincide. Then they start with the same pair, which is possible under the fulfillment of the condition only if

    \begin{enumerate}
        \item They start at the same position in some repetition of the cycle, in the case when $\widehat{u}$ is periodic.
        \item They start at the same position in some repetition of the cycle, in the case when $\widehat{u}$ is strictly preperiodic.
    \end{enumerate}
    
    But suffixes of this kind will coincide. Therefore, $\widehat{u}$ will be a universal Fibonacci sequence of some groupoid by Theorem \ref{uniqueinv}.
\end{proof}

We now prove the theorem on the uniqueness of defining a universal Fibonacci sequence.

\begin{mythm}\label{unique}
Let a universal Fibonacci sequence $\widehat{u}$ exist in a groupoid $(G, *)$. Then one of the following two conditions holds:
\begin{enumerate}
\item The universal Fibonacci sequence $\widehat{u}$ is periodic. Then the set of universal Fibonacci sequences in the groupoid $(G, *)$ consists of all singly infinite and the unique doubly infinite periodic sequence whose cycles are cyclic shifts of the cycle of $\widehat{u}$.
\item The universal Fibonacci sequence $\widehat{u}$ is not periodic and is unique.
\end{enumerate}
\end{mythm}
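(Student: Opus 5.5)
The plan is to fix two universal Fibonacci sequences $\widehat{u}$ and $\widehat{v}$ of the same groupoid and compare them through their sets of suffixes. Both sets equal the set of all Fibonacci sequences of $(G,*)$. Every suffix of $\widehat{v}$ is therefore a suffix of $\widehat{u}$ and vice versa.

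\textbf{Periodic case.} If $\widehat{u}$ is periodic with cycle $(u_1,\ldots,u_k)$, then by Lemma \ref{acs} every Fibonacci sequence is periodic with a cycle that is a cyclic shift of this one.
\begin{itemize}
\item If $\widehat{v}$ is singly infinite, Lemma \ref{onesideUFSisfc} gives $\widehat{v}=Fib(v_1,v_2)$. So $\widehat{v}$ is itself a suffix of $\widehat{u}$, i.e.\ a periodic sequence whose cycle is a cyclic shift of the cycle of $\widehat{u}$.
\item If $\widehat{v}$ is doubly infinite, every suffix of $\widehat{v}$ is periodic with that same cycle up to shift. Since the suffixes exhaust $\widehat{v}$ to the left, $\widehat{v}$ is the doubly infinite periodic sequence with this cycle, which is unique up to index shift.
\end{itemize}
Conversely, each such sequence is a UFS. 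Its suffixes are exactly the suffixes of $\widehat{u}$ (Lemma \ref{acs}), so both conditions of the definition hold.

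\textbf{Non-periodic case.} Suppose $\widehat{u}$ is not periodic, and first assume $\widehat{v}$ is periodic. By the periodic case applied to $\widehat{v}$, every Fibonacci sequence is periodic. Then Lemma \ref{allper} forces $\widehat{u}$ to be periodic, a contradiction. So $\widehat{v}$ is not periodic either.

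Now use Theorem \ref{norepeat}: in a non-periodic UFS each pair in $G^2$ occurs at a uniquely determined position, except for pairs in the periodic part of a strictly preperiodic sequence.
\begin{itemize}
\item \emph{Showing $\widehat{u}$ and $\widehat{v}$ have the same type.} If $\widehat{u}$ is singly infinite, then $\widehat{u}=Fib(u_1,u_2)$ is a suffix of $\widehat{v}$, say $\widehat{v}=(\ldots,v_{m-1},\widehat{u})$. If $\widehat{v}$ had an extra element before $\widehat{u}$, the pair $(v_{m-1},u_1)$ would also have to occur in $\widehat{u}$ as a substring (all of $G^2$ does). It would then appear twice in $\widehat{v}$, once in the preperiodic region or junction. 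By Lemma \ref{twopairs} this gives a periodic Fibonacci sequence starting there. Theorem \ref{norepeat} forbids such a repetition outside the periodic part, so $\widehat{v}=\widehat{u}$.
\item \emph{Both doubly infinite.} Choose a pair occurring exactly once in each: any preperiodic pair, or any pair at all in the aperiodic case. Aligning $\widehat{u}$ and $\widehat{v}$ at that pair, their suffixes from there coincide, since both equal the same Fibonacci sequence. Moving left, each earlier pair $(v_{j},v_{j+1})$ must occur in $\widehat{u}$ at a position whose suffix is the same Fibonacci sequence. Uniqueness of positions of non-periodic pairs then forces the occurrences to be aligned.
\end{itemize}

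The main obstacle is this left-extension step in the doubly infinite and mixed cases. The difficulty is ruling out that a pair in the preperiodic part of $\widehat{v}$ matches a pair in the periodic part of $\widehat{u}$. I would handle it by noting that a suffix starting at a preperiodic position is non-periodic, while every suffix starting inside the periodic part is periodic. Equal suffixes therefore cannot start in different classes, which pins down the alignment.
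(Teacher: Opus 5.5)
Your proof is correct, but it is organized differently from the paper's. The paper argues by contradiction, splitting only on whether each of the two sequences is singly or doubly infinite. In every case it shows that any discrepancy forces some Fibonacci sequence to be a proper suffix of itself, hence periodic by Lemma \ref{cissuf}. Lemmas \ref{acs} and \ref{allper} then make both sequences periodic with a common cycle. In the doubly infinite case the paper aligns the two sequences at an \emph{arbitrary} pair and takes the first disagreement to the left. It deduces that this pair's Fibonacci sequence is periodic, and since the pair was arbitrary, all of them are.

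You instead split first on whether $\widehat{u}$ is periodic. You handle the periodic case directly, including the converse that every listed periodic sequence is a UFS, which the paper only calls obvious. In the non-periodic case you use Theorem \ref{norepeat} and Lemma \ref{twopairs} to anchor at a pair occurring exactly once and propagate the alignment leftwards. This is essentially the contrapositive of the paper's doubly infinite argument. Your version mirrors the two-part structure of the statement and is more direct, at the price of invoking Theorem \ref{norepeat}; the paper needs only the elementary suffix lemmas.

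Two steps deserve a line each when you write it up:
\begin{itemize}
\item By Lemma \ref{twopairs}, a pair occurs exactly once in a UFS if and only if its Fibonacci sequence is non-periodic. This depends only on the groupoid, so your anchor occurs exactly once in both sequences. Such an anchor exists by Lemma \ref{allper}, since $\widehat{u}$ is not periodic.
\item The leftward step really rests on the uniqueness of the anchor's position. Say the anchor sits at position $k$ in $\widehat{u}$ and $m$ in $\widehat{v}$. The suffix of $\widehat{v}$ from position $j<m$ contains the anchor at offset $m-j$. Hence every occurrence of $(v_j,v_{j+1})$ in $\widehat{u}$ must start at position $k-(m-j)$.
\end{itemize}
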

\begin{proof}
    Assume the contrary. Let there exist two different universal Fibonacci sequences $\widehat{u}$ and $\widehat{u}'$ in a groupoid $(G, *)$, such that $\widehat{u}$ and $\widehat{u}'$ are not periodic with cycles coinciding up to a cyclic shift.

    Consider all possible cases of being singly or doubly infinite for the sequences $\widehat{u}$ and $\widehat{u}'$.
    \begin{enumerate}
    \item \textit{The sequences $\widehat{u}$ and $\widehat{u}'$ are singly infinite}. Since under these conditions they are both Fibonacci sequences by Lemma \ref{onesideUFSisfc}, they are both suffixes of each other, that is
    $$\widehat{u} = (u_1, \ldots, u_{k - 1}, \widehat{u}') = (u_1, \ldots, u_{k - 1}, u_1', \ldots, u_{l - 1}', \widehat{u}).$$
    From this, by Lemma \ref{cissuf}, it follows that $\widehat{u}$ is a periodic sequence. Since $\widehat{u}'$ is a suffix of the sequence $\widehat{u}$, then by Lemma \ref{acs} the sequence $\widehat{u}'$ is periodic, and the cycles of $\widehat{u}$ and $\widehat{u}'$ coincide up to a cyclic shift, a contradiction.
    \item \textit{The sequence $\widehat{u}$ is singly infinite, and $\widehat{u}'$ is doubly infinite}. Then by Lemma \ref{onesideUFSisfc}, $\widehat{u}$ is a Fibonacci sequence, and hence a suffix of the sequence $\widehat{u}'$, that is,
    $$\widehat{u}' = (\ldots, u'_{k - 1}, \widehat{u}).$$
    Consider an arbitrary suffix $\widehat{u}''$ of the sequence $\widehat{u}'$ starting at position $m$, $m < k$. Then
    $$\widehat{u}' = (\ldots, u'_{m - 1}, \widehat{u}'') = (\ldots, u'_{m - 1}, \ldots, u_{k - 1}', \widehat{u}).$$
    On the other hand, $\widehat{u}''$ as a suffix of the sequence $\widehat{u}'$ is a Fibonacci sequence, and hence is a suffix of the sequence $\widehat{u}$, that is,
    $$\widehat{u} = (u_1, \ldots, u_{l - 1}, \widehat{u}'').$$
    From this it follows that
    $$\widehat{u}'' = (u_m', \ldots, u_{k - 1}', u_1, \ldots, u_{l - 1}, \widehat{u}'')$$
    and
    $$\widehat{u} = (u_1, \ldots, u_{l -1}, u'_m, \ldots, u'_{k - 1}, \widehat{u})$$
    that is, by Lemmas \ref{cissuf} and \ref{acs}, the sequences $\widehat{u}$ and $\widehat{u}''$ are periodic. Since $\widehat{u}''$ can be chosen to start arbitrarily far to the left as a suffix of the sequence $\widehat{u}'$, by Lemma \ref{allper} the sequence $\widehat{u}'$ is periodic, with a cycle that is a cyclic shift of the cycle of the sequence $\widehat{u}$. A contradiction.
    \item \textit{The sequences $\widehat{u}$ and $\widehat{u}'$ are doubly infinite}. Choose an arbitrary pair $(a, b) \in G^2$. Then $Fib(a, b)$ is a suffix of both $\widehat{u}$ and $\widehat{u}'$, and
    $$\widehat{u} = (\ldots, u_{k - 1}, Fib(a, b)),$$
    $$\widehat{u}' = (\ldots, u_{m - 1}', Fib(a, b)).$$
    If for all $n > 0$ we have $u_{k - n} = u_{m - n}'$, then $\widehat{u}$ coincides with $\widehat{u}'$ up to a shift of indices, that is, $\widehat{u}$ and $\widehat{u}'$ coincide as doubly infinite sequences, which contradicts the fact that they are different. Therefore, there exists an $n > 0$ such that $u_{k - n} \ne u_{m - n}'$. Choose the minimal such $n$. By the definition of a universal Fibonacci sequence, $Fib(u_{m - n}', u_{m - n + 1}')$ is a suffix of the sequence $\widehat{u}$. Without loss of generality, assume that $Fib(u_{m - n}', u_{m - n + 1}')$ is a suffix of $\widehat{u}$ starting at position $l < k - 1$. Then
    $$\widehat{u} = (\ldots, u_{l - 1}, Fib(u_{m - n}', u_{m - n + 1}')) = (\ldots, u_{l - 1}, u_{m - n}', u_{m - n + 1}', \ldots, a, b, \ldots) =$$
    $$(\ldots, u_{l - 1}, \ldots, u_{k - 1}, Fib(a, b)).$$
    Consequently,
    $$Fib(a, b) = (\ldots, Fib(a, b))$$
    and by Lemma \ref{cissuf} the sequence $Fib(a, b)$ is periodic. The periodicity of $Fib(a, b)$ does not depend on the choice of the particular pair $(a, b) \in G^2$. By Lemma \ref{allper}, the sequences $\widehat{u}$ and $\widehat{u}'$ are periodic, with cycles coinciding with each other up to a cyclic shift and with the cycle for $Fib(a, b)$, a contradiction
    \end{enumerate}.

    It remains to note that if the universal Fibonacci sequence $\widehat{u}$ is periodic, then obviously, taking any singly infinite sequence with the same cycle up to a cyclic shift, or any doubly infinite sequence, the sequence obtained in this way will also be a universal Fibonacci sequence for the same groupoid.

\end{proof}

Thus, a universal Fibonacci sequence is unique in the non-periodic case, while in the periodic case we can also regard it as unique up to a cyclic shift of the cycle.

We now prove a theorem that allows us to reformulate the property of a UFS-groupoid as the pairwise embeddability of Fibonacci sequences.

\begin{mythm}\label{ufssuff}
    For a groupoid $(G, *)$, the following two properties are equivalent:
    
    \begin{enumerate}
        \item $(G, *)$ is a UFS-groupoid.
        \item For any two Fibonacci sequences $Fib(a_1, b_1)$, $Fib(a_2, b_2)$, one of them is a suffix of the other.
    \end{enumerate}
\end{mythm}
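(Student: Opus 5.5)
The direction (1)$\Rightarrow$(2) follows directly from the definitions. Let $\widehat{u}$ be a universal Fibonacci sequence of $(G,*)$. By item 2 of the definition, both $Fib(a_1,b_1)$ and $Fib(a_2,b_2)$ are suffixes of $\widehat{u}$, starting at positions $k_1$ and $k_2$ respectively. Without loss of generality $k_1\le k_2$. The suffix of $\widehat{u}$ starting at $k_2$ is then also a suffix of the suffix starting at $k_1$. Hence $Fib(a_2,b_2)$ is a suffix of $Fib(a_1,b_1)$.

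For (2)$\Rightarrow$(1) we must build a universal sequence. Since $G^2$ may be infinite, we cannot simply take a ``longest'' Fibonacci sequence. Instead I would order the Fibonacci sequences by the suffix relation: write $F\preceq F'$ if $F$ is a suffix of $F'$. By hypothesis (2) this relation is total. I would split into two cases.

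\emph{Case 1: some Fibonacci sequence $F_0=Fib(a,b)$ is $\preceq$-maximal,} i.e.\ every Fibonacci sequence is a suffix of $F_0$. Then $\widehat{u}=F_0$ works. Item 1 of the definition holds by the lemma that suffixes of Fibonacci sequences are Fibonacci sequences, and item 2 holds by maximality.

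\emph{Case 2: no such maximal sequence exists.} Here $G$ must be infinite. First I would show that $G^2$ is countable. Fix any $F_0$. Every pair $(a,b)$ either occurs as a consecutive pair in $F_0$ (if $Fib(a,b)\preceq F_0$), or $F_0$ is a suffix of $Fib(a,b)$, in which case $Fib(a,b)=(a,b,\ldots,F_0)$ with some finite prefix length $m$. Consider two pairs with the same prefix length $m$ for which $F_0$ is a suffix. By totality, one of their Fibonacci sequences is a suffix of the other. The offsets of $F_0$ inside them are equal, so the shift between them is zero unless $F_0$ occurs inside $F_0$ at a nonzero shift, i.e.\ unless $F_0$ is periodic. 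Thus in the non-periodic case each $m$ determines at most one pair, and $G^2$ is countable.

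If some Fibonacci sequence is periodic, I would argue directly. Suppose $F_0$ is periodic and $F_0\preceq F$. Every $F'$ with $F\preceq F'$ contains $F$ at some definite offset, and $F\preceq F_0$ would force $F$ to be periodic with the same cycle. So there are two subcases: either all Fibonacci sequences are shifts of $F_0$, in which case $F_0$ itself is universal and Case 1 applies; or the non-periodic ones form a chain under $\preceq$ with a well-defined ``length before the cycle''.

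In every situation the remaining work is the same. We have a countable chain $F_1\preceq F_2\preceq\cdots$, cofinal under $\preceq$: enumerate $G^2$ and take, at step $n$, the larger of the current sequence and the Fibonacci sequence of the $n$-th pair. Each $F_n$ is the suffix of $F_{n+1}$ after a prefix of length $\ell_n\ge 0$. I would align the sequences so that these suffix embeddings are compatible, placing $F_1$ at index $1$ and $F_{n+1}$ so that it ends in $F_n$. The embedding offset is unique when the sequences are non-periodic, since two offsets would create a repeated suffix and hence, by Lemma \ref{cissuf}, periodicity. Taking the union gives a doubly infinite sequence $\widehat{u}$. It is doubly infinite because the total prefix lengths are unbounded: otherwise the chain would stabilise and produce a maximal element. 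Every suffix of $\widehat{u}$ is a suffix of some $F_n$, hence Fibonacci. Every $Fib(a,b)$ is $\preceq$ some $F_n$, hence is a suffix of $\widehat{u}$.

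The main obstacle is making this alignment well defined, i.e.\ ruling out ambiguity of offsets caused by periodicity. I would handle it as sketched: if some $F_n$ is periodic with $F_n\preceq F_{n+1}$ strictly, then $F_{n+1}$ is strictly preperiodic and its offset is unique anyway, because the non-periodic prefix pins it down. If all the $F_n$ are periodic, they are all shifts of a single cycle, and that cycle is already universal.
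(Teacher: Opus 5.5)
Your argument is correct, and (1)$\Rightarrow$(2) is exactly the paper's. For (2)$\Rightarrow$(1), however, you take a different route.

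The paper never enumerates $G^2$. After handling the all-periodic case, it starts from a non-periodic $Fib(a_1,b_1)$ and keeps prepending an \emph{arbitrary} strictly longer Fibonacci sequence. It then proves that any resulting doubly infinite union is automatically universal. Suppose some $Fib(a,b)$ were not a suffix. By totality, every suffix of $\widehat{u}$ would then be a suffix of $Fib(a,b)$. The infinitely many distinct suffixes $Fib(u_i,u_{i+1})$, $i<0$, would force $Fib(a_1,b_1)$ to occur in $Fib(a,b)$ at two different offsets, so it would be periodic by Lemma \ref{cissuf}.

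You use the same ``a non-periodic sequence cannot sit at two offsets'' argument, but you spend it up front. It shows that the prefix length injectively indexes the Fibonacci sequences containing a fixed non-periodic $F_0$. This gives countability of $G^2$, recovering Lemma \ref{card} directly from (2), and hence a cofinal chain; universality of the union is then immediate. The paper's version shows slightly more, namely that the choice of chain is irrelevant. Yours makes the construction and its cofinality explicit.

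Two simplifications are available. First, in Case 2 some Fibonacci sequence is necessarily non-periodic. By totality any two periodic ones are suffixes of each other, so if all were periodic, any of them would be maximal. You can therefore take that non-periodic sequence as $F_0$ and drop the ``length before the cycle'' detour.

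Second, the alignment you call the main obstacle is not one. Fix any witnessing offset $\ell_n$ at each step; the placements then agree on overlaps by construction. Your stabilisation argument (bounded $\sum \ell_n$ forces $\ell_n=0$ eventually) also works for any such choice. Uniqueness is never needed --- which matters, because your claim that a periodic $F_n$ has a unique offset inside a strictly preperiodic $F_{n+1}$ is false. That offset is determined only modulo the period.
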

\begin{proof}

    We prove that items 1. and 2. are equivalent.

First, we show that item 1. implies item 2. Suppose that $(G, *)$ admits a universal Fibonacci sequence $\widehat{u}$. Consider arbitrary Fibonacci sequences $Fib(a_1, b_1)$, $Fib(a_2, b_2)$. By the definition of a universal Fibonacci sequence, both sequences are suffixes of $\widehat{u}$. Let the first occur as a suffix starting at position $k$, and the second starting at position $m$ in $\widehat{u}$, and without loss of generality assume $k < m$. Then
$$\widehat{u} = (\ldots, u_{k - 1}, Fib(a_1, b_1)) = (\ldots, u_{k - 1}, \ldots, u_{m - 1}, Fib(a_2, b_2))$$
and $Fib(a_2, b_2)$ is contained as a suffix in $Fib(a_1, b_1)$.
    
   Now we show that item 2. implies item 1. We will construct a universal Fibonacci sequence $\widehat{u}$ iteratively. If all Fibonacci sequences are periodic and nested in each other, then their cycles coincide up to a cyclic shift, and any Fibonacci sequence can be taken as $\widehat{u}$. Now suppose there exists a pair $(a_1, b_1) \in G^2$ such that $Fib(a_1, b_1)$ is not periodic; set
$$\widehat{u} = (u_1, u_2, \ldots) = Fib(a_1, b_1).$$
If $\widehat{u}$ contains all Fibonacci sequences as suffixes, then we have already constructed a universal Fibonacci sequence. Otherwise, we find a Fibonacci sequence $Fib(a_2, b_2)$ that contains $Fib(a_1, b_1)$ as a suffix, and redefine
$$\widehat{u} = (u_{-m}, \ldots, Fib(a_1, b_1)) = Fib(a_2, b_2)$$
Then, similarly, we redefine
$$\widehat{u} = (u_{-l}, \ldots, u_{-m}, \ldots, Fib(a_1, b_1)) = Fib(a_3, b_3)$$
and so on. Continuing this process, sooner or later we obtain either a singly infinite universal Fibonacci sequence, coinciding with some $Fib(a_n, b_n)$ at which the process stops, or a doubly infinite sequence $\widehat{u}$ every suffix of which is some Fibonacci sequence. Suppose $\widehat{u}$ is doubly infinite and is not a universal Fibonacci sequence; then there exists $Fib(a, b)$ which is not a suffix of $\widehat{u}$. Since $\widehat{u}$ has a non-periodic suffix, it is itself not periodic, and all pairs $(u_i, u_{i+1})$ for $i < 0$ are not substrings of any periodic suffix; therefore, by Lemma \ref{twopairs}, each occurs as a substring in $\widehat{u}$ exactly once. Since $Fib(a, b)$ is not a suffix of any suffix of $\widehat{u}$, every suffix of $\widehat{u}$ is a suffix of $Fib(a, b)$. Consequently,
$$Fib(a, b) = (\ldots, x_{k-1}, Fib(a_1, b_1)).$$
Among the infinitely many sequences $Fib(u_i, u_{i+1})$ for $i < 0$, one of them occurs as a suffix of $Fib(a, b)$ at some position $l > k+1$. Hence
$$Fib(a, b) = (\ldots, x_{k-1}, Fib(a_1, b_1)) = (\ldots, x_{k-1}, a_1, b_1, \ldots, u_i, u_{i+1}, \ldots, Fib(a_1, b_1)).$$
From this, by Lemma \ref{cissuf}, it follows that $Fib(a_1, b_1)$ is periodic, a contradiction.
\end{proof}

\section{Structure of finite UFS-groupoids}\label{fin}

In this section, we describe the structure of finite UFS-groupoids. For an explicit description of this structure, it is useful to first note the emergence of natural periodicity of universal Fibonacci sequences in the case of finite groupoids.

\begin{mythm}\label{periodic}
    A UFS-groupoid $(G, *)$ is finite if and only if its universal Fibonacci sequence $\widehat{u}$ is either periodic or singly infinite and strictly preperiodic.

    Moreover, assuming $|G| = n$:

    \begin{enumerate}
        \item If $\widehat{u}$ is periodic, then its period equals $n^2$.
        \item If $\widehat{u}$ is singly infinite and strictly preperiodic, then the sum of the lengths of its preperiodic and periodic parts equals $n^2$.
    \end{enumerate}

\end{mythm}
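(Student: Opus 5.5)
The plan is to reduce everything to Theorem \ref{norepeat}. That theorem says that the length-$2$ substrings $(u_i,u_{i+1})$ of $\widehat{u}$, counted in the appropriate way for each periodicity type, are pairwise distinct and exhaust $G^2$. So the number of such pairs equals $|G|^2$. The proof then becomes a matter of counting these pairs in each of the possible shapes of $\widehat{u}$.

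\emph{Direction ($\Leftarrow$).} Suppose first that $\widehat{u}$ is periodic with cycle $(u_1,\ldots,u_k)$. By item 1 of Theorem \ref{norepeat}, the $k$ cyclic pairs are distinct and form $G^2$. Hence $G^2$ is finite with exactly $k$ elements, so $G$ is finite and $k=n^2$. Now suppose $\widehat{u}=(u_1,u_2,\ldots)$ is singly infinite and strictly preperiodic, with preperiodic part $(u_1,\ldots,u_{k-1})$ and periodic part $(u_k,\ldots,u_{k+t})$ of length $t+1$. By item 2 of Theorem \ref{norepeat}, the disjoint classes are as follows:
\begin{itemize}
\item the $k-2$ pairs inside the preperiodic part;
\item the single junction pair;
\item the $t+1$ cyclic pairs of the periodic part.
\end{itemize}
Together these give $(k-1)+(t+1)$ elements, which is exactly the sum of the two lengths. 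This total is finite, so $G$ is finite and the sum equals $n^2$. I will take care to check the degenerate case in which the preperiodic part has length $1$, where there are no internal preperiodic pairs but the junction pair is still present; the count still works.

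\emph{Direction ($\Rightarrow$).} Assume $|G|=n$ is finite. I will rule out the remaining two types. If $\widehat{u}$ were aperiodic, item 3 of Theorem \ref{norepeat} would give infinitely many pairwise distinct pairs in $G^2$, which is impossible. If $\widehat{u}$ were doubly infinite and strictly preperiodic, its preperiodic part $(\ldots,u_{k-2},u_{k-1})$ would be infinite to the left. Item 2(a) would then give infinitely many distinct pairs in $G^2$, again impossible. Alternatively, any infinite sequence over a finite set repeats some pair. By Lemmas \ref{twopairs} and \ref{persuffix} this forces a periodic suffix, which rules out the aperiodic case immediately; the counting argument handles the doubly infinite preperiodic case.

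\emph{Main obstacle.} The only delicate point is bookkeeping. I need the precise definitions of the preperiodic part (all elements not lying in a periodic suffix) and of the periodic part (a cycle of length equal to the period) so that the pair count matches ``sum of lengths'' exactly. In particular, the paper writes the cycle as $(u_k,\ldots,u_{k+t})$, which has length $t+1$, and I will be careful to use that length consistently.
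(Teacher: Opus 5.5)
Your proposal is correct and follows the paper's proof: both reduce the statement to Theorem~\ref{norepeat} and count the length-$2$ substrings, which must number exactly $|G|^2$. Your count $(k-2)+1+(t+1)$ is the paper's $(l-1)+1+(k-l)$ with different indexing. The only difference is that you spell out the ($\Rightarrow$) direction (infinitely many distinct pairs in the aperiodic and doubly infinite strictly preperiodic cases), which the paper compresses into a single sentence.
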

\begin{proof} We first prove the main statement of the theorem.

The finiteness of a UFS-groupoid $(G, *)$ is equivalent to the finiteness of the set of pairs $(a, b) \in G^2$. By Theorem \ref{norepeat}, this occurs precisely in the case when the sequence $\widehat{u}$ is periodic or singly infinite and strictly preperiodic.

Now let $|G| = n$.

\begin{enumerate}
    \item If $\widehat{u}$ is periodic with cycle $(u_1, \ldots, u_k)$, then by Theorem \ref{norepeat} the number of pairs from $G^2$ is exactly $k$, that is, $k = n^2$.
    \item If $\widehat{u}$ has a preperiodic part $(u_1, \ldots, u_l)$ and a periodic part $(u_{l+1}, \ldots, u_k)$, then by Theorem \ref{norepeat} the set $G^2$ consists of $l-1$ pairs from the preperiodic part of $\widehat{u}$, $1$ pair at the junction of the preperiodic and periodic parts of $\widehat{u}$, and $k-l$ pairs from the periodic part. Consequently, the sum of the lengths of the preperiodic and periodic parts, i.e., the number $k$, is exactly $n^2$.
\end{enumerate}

\end{proof}

The case of periodic universal Fibonacci sequences admits a precise description in terms of de Bruijn sequences.

\begin{mythm}\label{ufsperde}
    A doubly infinite periodic sequence $\widehat{u}$ with cycle $(u_1, \ldots, u_{n^2})$ of elements of a set $G$ of cardinality $n$ is a universal Fibonacci sequence of some groupoid over $G$ if and only if its cycle, considered as a cyclically ordered sequence, is a de Bruijn sequence $B(n, 2)$.
\end{mythm}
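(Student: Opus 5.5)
The plan is to reduce both directions to Theorem \ref{norepeat}, item 1, and its converse. A de Bruijn sequence $B(n,2)$ is a cyclic sequence of length $n^2$ in which every ordered pair in $G^2$ occurs exactly once as a cyclic substring of length $2$. Item 1 of Theorem \ref{norepeat} describes a periodic UFS in exactly these terms. So the proof amounts to matching the two descriptions and checking the counts.

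For the forward direction, I assume $\widehat{u}$ is a doubly infinite periodic universal Fibonacci sequence of some groupoid on $G$, with $|G| = n$. By Theorem \ref{periodic}, its period is exactly $n^2$, so the given cycle $(u_1, \ldots, u_{n^2})$ is the minimal cycle and not a repetition of a shorter one. By Theorem \ref{norepeat}(1), the cyclic pairs $(u_1,u_2), \ldots, (u_{n^2-1},u_{n^2}), (u_{n^2},u_1)$ are pairwise distinct and together form $G^2$. Hence each pair occurs exactly once as a cyclic substring of the cycle, which is the definition of $B(n,2)$.

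For the converse, I assume the cycle is a de Bruijn sequence $B(n,2)$. Then its $n^2$ cyclic pairs run through all of $G^2$, each exactly once. The one point needing care is that Theorem \ref{norepeat}(1) is phrased for a periodic sequence given by its cycle, meaning its minimal cycle. So I will check that $\widehat{u}$ really has minimal period $n^2$. If its minimal period were a proper divisor $d < n^2$, the cycle would have at most $d$ distinct cyclic pairs, contradicting that it contains all $n^2$ pairs. Hence $(u_1,\ldots,u_{n^2})$ is the cycle of $\widehat{u}$, condition (1) of Theorem \ref{norepeat} holds, and the converse part of that theorem gives that $\widehat{u}$ is a universal Fibonacci sequence of some groupoid over $G$. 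As a cross-check, one could also verify this directly via Theorem \ref{uniqueinv}. Every pair occurs, and in a periodic sequence two occurrences of the same pair are at positions congruent modulo $n^2$, since the pair appears only once per cycle. So the suffixes starting there coincide.

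I expect no serious obstacle. The only delicate step is the bookkeeping between the given length-$n^2$ block and the minimal cycle, handled by the period argument above. One should also note that the statement is about cyclically ordered sequences, so cyclic shifts of the cycle, and the corresponding re-indexings of the doubly infinite sequence, do not affect either property.
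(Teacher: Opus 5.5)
Your proposal is correct and follows essentially the paper's proof: both directions are read off from item 1 of Theorem \ref{norepeat} and its converse, exactly as the paper does. Your extra checks that the minimal period is exactly $n^2$ (via Theorem \ref{periodic} in the forward direction and the count of distinct cyclic pairs in the converse) are bookkeeping the paper leaves implicit, and they are sound.
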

\begin{proof}
    Let $\widehat{u}$ be a periodic universal Fibonacci sequence. Then by Theorem \ref{norepeat} it follows that in its cycle all pairs from $G^2$ occur as substrings exactly once. Hence, the cycle is by definition a de Bruijn sequence $B(n, 2)$.
    
    Conversely, if the cycle of the sequence $\widehat{u}$ is a de Bruijn sequence $B(n, 2)$, then in the cycle, and hence in $\widehat{u}$, every pair from $G^2$ occurs exactly once. Therefore, by Theorem \ref{norepeat}, the sequence $\widehat{u}$ is a universal Fibonacci sequence of some groupoid over $G$.
\end{proof}

Thus, we can exactly identify periodic universal Fibonacci sequences with de Bruijn sequences.

The case of finite UFS-groupoids with strictly preperiodic universal Fibonacci sequences can be completely reduced to the case of UFS-groupoids with periodic universal Fibonacci sequences. To this end, it suffices to modify the operation in a certain way on exactly one pair of elements.

\begin{mythm}\label{preperiodic}
   Let a UFS-groupoid $(G, *)$ of cardinality $n > 1$ with a periodic universal Fibonacci sequence $\widehat{u}$ be given. Define a new groupoid $(G, *')$ by arbitrarily redefining $*$ on a single pair of elements $(g_1, g_2)$, that is,
$$ a *' b = \begin{cases}
a * b, & \text{if } (a, b) \ne (g_1, g_2); \\
g_3, & \text{otherwise.}
\end{cases}$$
for some $g_1, g_2, g_3 \in G$ such that $g_3 \ne g_1 * g_2$. Then $(G, *')$ is a UFS-groupoid with a strictly preperiodic universal Fibonacci sequence. Moreover, by a cyclic shift, the cycle of the sequence $\widehat{u}$ can be written as
$$(u_1, \ldots, u_{n^2})$$
such that there exists $l$, $1 \le l < n^2$, with
\begin{itemize}
    \item $u_{n^2 - 1} = g_1;$
    \item $u_l = u_{n^2} = g_2;$
    \item $u_{l + 1} = g_3.$
\end{itemize}
and the universal Fibonacci sequence $\widehat{u}'$ for $(G, *')$ has preperiodic part
$$(u_l, u_1, u_2, \ldots, u_{l - 1})$$
of length $l$ and periodic part
$$(u_{n^2}, u_{l + 1}, \ldots, u_{n^2 - 1})$$
with period $n^2 - l$.
\end{mythm}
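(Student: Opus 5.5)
The plan is to work directly with the cycle of $\widehat{u}$, which by Theorem \ref{ufsperde} is a de Bruijn sequence $B(n,2)$. In this cycle every pair of $G^2$ occurs exactly once, and $a * b$ is the element following the unique occurrence of $(a,b)$.

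\textbf{Normalizing the cycle.} Since the pair $(g_1,g_2)$ occurs exactly once, we cyclically shift so that $u_{n^2-1}=g_1$ and $u_{n^2}=g_2$. Then $u_1 = g_1 * g_2 \ne g_3$. The pair $(g_2,g_3)$ also occurs exactly once, say at positions $(u_l,u_{l+1})$. We must show that $1\le l<n^2$. If $l=n^2$, then $u_{l+1}=u_1=g_1*g_2=g_3$, a contradiction. Hence $1\le l\le n^2-1$, and the occurrence lies inside the written cycle. This gives $u_l=g_2=u_{n^2}$ and $u_{l+1}=g_3$, as claimed.

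\textbf{Constructing $\widehat{u}'$.} Define the singly infinite sequence
$$\widehat{u}' = (u_l, u_1, u_2, \ldots, u_{l-1}, u_{n^2}, u_{l+1}, \ldots, u_{n^2-1}, u_{n^2}, u_{l+1}, \ldots),$$
that is, the preperiodic part $(u_l,u_1,\ldots,u_{l-1})$ followed by the periodic repetition of $(u_{n^2},u_{l+1},\ldots,u_{n^2-1})$. Because $u_l=u_{n^2}=g_2$, this is the sequence $Fib'(g_2,u_1)$ for $*'$, which we check by following the cycle. Consecutive triples inside $(u_l,u_1,\ldots,u_{l-1},u_{n^2})$ are triples of the original cycle read cyclically, since $(u_l,u_1,u_2)$ has the same first two entries as $(u_{n^2},u_1,u_2)$. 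None of them starts with $(g_1,g_2)$, so $*'$ agrees with $*$ on them. The only place where $*'$ differs from $*$ is the step from $(u_{n^2-1},u_{n^2})=(g_1,g_2)$, where $*'$ gives $g_3=u_{l+1}$ instead of $u_1$. So after reaching $(u_{l-1},u_{n^2})$, which continues with $u_{n^2}*u_{l+1}$ (the original cycle's successor of $(u_l,u_{l+1})$, namely $u_{l+2}$), the sequence runs through $u_{l+1},\ldots,u_{n^2-1},u_{n^2}$. It then jumps back to $u_{l+1}$ via the redefined pair and repeats forever.

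\textbf{Applying Theorem \ref{norepeat}(2).} The pairs of the preperiodic part are $(u_l,u_1),(u_1,u_2),\ldots,(u_{l-2},u_{l-1})$. The junction pair is $(u_{l-1},u_{n^2})=(u_{l-1},u_l)$. The cyclic pairs of the periodic part are $(u_{n^2},u_{l+1})=(u_l,u_{l+1}),(u_{l+1},u_{l+2}),\ldots,(u_{n^2-1},u_{n^2})$. Up to renaming $u_l=u_{n^2}$, this list is exactly the $n^2$ cyclic pairs of the de Bruijn cycle, each occurring once, so together they form $G^2$ without repetition.

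Two things remain for Theorem \ref{norepeat}(2) to apply. First, the sequence must be genuinely strictly preperiodic, and the stated parts must be exactly the preperiodic part and the maximal periodic part. Non-repetition of the pairs ensures the periodic part cannot be extended leftwards. The preperiodic part is nonempty because $l\ge 1$. The period is $n^2-l\ge 1$, and it is minimal since the pairs in the periodic part are distinct. Second, for the case $l=1$ we must confirm that the degenerate preperiodic part $(u_1)$ and the junction pair are handled correctly.

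Theorem \ref{norepeat} then makes $\widehat{u}'$ a UFS of some groupoid. That groupoid is $(G,*')$, because each triple of $\widehat{u}'$ agrees with $*'$ as verified above.

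The main obstacle is the careful index bookkeeping in the $*'$-recurrence check, especially around the wrap-around using $u_l=u_{n^2}$ and in the edge cases $l=1$ and $l=n^2-1$. I would handle this by treating $\widehat{u}'$ as a walk on the edges of the de Bruijn graph: the redefinition reroutes exactly one edge, from $(g_1,g_2)$ into $(g_2,g_3)$, which splits the Hamiltonian cycle into a tail followed by a loop.
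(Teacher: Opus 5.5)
Your proposal is correct and follows essentially the same route as the paper. Both normalize the de Bruijn cycle so that $(g_1,g_2)$ ends it, use $g_1*g_2\ne g_3$ to place $(g_2,g_3)$ at some $l<n^2$, build $\widehat{u}'$, match its length-$2$ substrings bijectively with those of the cycle, and invoke the converse of Theorem \ref{norepeat}(2) after noting that only the step from $(g_1,g_2)$ changes. Your extra checks make explicit what the paper leaves implicit: that the preperiodic part cannot be extended and the period is minimal, and the edge cases $l=1$ and $l=n^2-1$ (in the latter, $g_1=g_2=g_3$ and $(u_{n^2},u_{l+1})$ is itself the redefined pair).
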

\begin{proof}

   In the cycle for $\widehat{u}$, by Theorem \ref{norepeat}, all elements of $G^2$ occur as substrings exactly once, including $(g_1, g_2)$. By cyclically shifting the cycle, we can write it as $(u_1, \ldots, u_{n^2})$ with $u_{n^2 - 1} = g_1$, $u_{n^2} = g_2$. The pair $(g_2, g_3)$ likewise occurs exactly once as a substring in the cycle $(u_1, \ldots, u_{n^2})$. Suppose it occurs as a substring on the last and first positions, that is, $u_1 = g_3$. But by the definition of a universal Fibonacci sequence, this would exactly mean that $g_1 * g_2 = g_3$, which contradicts the definition. Therefore, the pair $(g_2, g_3)$ occurs as a substring at another position; that is, there exists $l$ such that $1 \le l < n^2$ and $u_l = g_2$, $u_{l + 1} = g_3$.
   
   Consider the singly infinite strictly preperiodic sequence $\widehat{u}'$ with preperiodic part $(u_l, u_1, u_2, \ldots, u_{l - 1})$ and periodic part $(u_{n^2}, u_{l + 1}, \ldots, u_{n^2 - 1})$. The following pairs occur as substrings in $\widehat{u}'$ exactly:
   
   \begin{enumerate}
    \item $(u_l, u_1)$, which coincides with $(u_{n^2}, u_1)$ — this begins the preperiodic part;
    \item $(u_1, u_2), \ldots, (u_{l - 2}, u_{l - 1})$ — these end the preperiodic part;
    \item $(u_{l - 1}, u_{n^2})$, which coincides with $(u_{l - 1}, u_l)$ — the pair at the junction of the preperiodic and periodic parts;
    \item $(u_{l + 1}, u_{l + 2}), \ldots, (u_{n^2 - 1}, u_{n^2})$ — these constitute the periodic part, with the last pair occurring at the end of the cycle of the periodic part.
\end{enumerate}In view of this, for $\widehat{u}'$ the pairs occurring as substrings in the preperiodic part, at the junction between the preperiodic and periodic parts, and in the periodic part together constitute $G^2$ and are all distinct. Moreover, after any pair of elements except $(u_{n^2 - 1}, u_{n^2})$, the following element is the same as in $\widehat{u}$, whereas after $(u_{n^2 - 1}, u_{n^2})$ comes $u_{l + 1} = g_3$. Therefore, by Theorem \ref{norepeat}, the sequence $\widehat{u}'$ is precisely the universal Fibonacci sequence of the UFS-groupoid $(G, *')$.
\end{proof}

\begin{mylemma}\label{preperidoicfirst}
    Let a UFS-groupoid $(G, *)$ of cardinality $n > 1$ with a singly infinite strictly preperiodic universal Fibonacci sequence $\widehat{u}$ be given, with preperiodic part $(u_1, \ldots, u_l)$ and periodic part $(u_{l + 1}, \ldots, u_{n^2})$. Then $u_1 = u_{l + 1}$.

\end{mylemma}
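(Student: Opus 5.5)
We argue by counting occurrences of elements, using the pair structure from Theorem \ref{norepeat} and Theorem \ref{periodic}. The sequence is $\widehat{u} = (u_1, \ldots, u_l, u_{l+1}, \ldots, u_{n^2}, u_{l+1}, \ldots)$. By Theorem \ref{norepeat}, case 2, the pairs $(u_i, u_{i+1})$ for $1 \le i < n^2$, together with the wrap-around pair $(u_{n^2}, u_{l+1})$, are exactly the $n^2$ pairs of $G^2$, each appearing once.

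The plan is to fix an element $x \in G$ and count the pairs whose \emph{first} coordinate is $x$, and separately the pairs whose \emph{second} coordinate is $x$. Each count must be exactly $n$, since the listed pairs are a bijective enumeration of $G^2$.

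Consider the closed list $P = \big((u_1,u_2), \ldots, (u_{n^2-1}, u_{n^2}), (u_{n^2}, u_{l+1})\big)$.
\begin{itemize}
\item The multiset of first coordinates is $\{u_1, \ldots, u_{n^2}\}$, the full list of $u_i$.
\item The multiset of second coordinates is $\{u_2, \ldots, u_{n^2}, u_{l+1}\}$, i.e.\ the full list with one copy of $u_1$ removed and one copy of $u_{l+1}$ added.
\end{itemize}
Both multisets must contain every element exactly $n$ times. Hence the full list $(u_1,\ldots,u_{n^2})$ contains each $x$ exactly $n$ times. Removing $u_1$ and adding $u_{l+1}$ must then preserve all multiplicities, which forces $u_1 = u_{l+1}$.

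The argument is a direct comparison of the two coordinate multisets. The only step needing care is confirming that the enumeration in Theorem \ref{norepeat} is exactly the list $P$ with no omissions or double counts. Concretely, the preperiodic pairs $(u_1,u_2),\ldots,(u_{l-1},u_l)$, the junction pair $(u_l,u_{l+1})$, and the cyclic periodic pairs $(u_{l+1},u_{l+2}),\ldots,(u_{n^2},u_{l+1})$ together give precisely $P$, which has $n^2$ entries. The hypothesis $n>1$ is not really needed, apart from ensuring that the preperiodic part is nonempty, i.e.\ $l \ge 1$.
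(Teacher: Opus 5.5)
Your proof is correct and follows essentially the paper's argument. Both use the fact that the listed $n^2$ pairs enumerate $G^2$ exactly once, so every element occurs $n$ times as a first coordinate and $n$ times as a second coordinate. The only imbalance comes from $u_1$, which appears only as a first coordinate, and from $u_{l+1}$, which gains an extra second coordinate via the wrap-around pair $(u_{n^2}, u_{l+1})$. Your comparison of the two coordinate multisets is a slightly cleaner version of the paper's position-by-position count, and it avoids the paper's separate case for a periodic part of length $1$.
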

\begin{proof}

    Each element of $G$ occurs the same number of times in the first and second positions in the pairs from $G^2$.

Write the elements of the preperiodic and periodic parts together:
$$\widehat{u}' = (u_1, \ldots, u_{n^2}).$$
By Theorem \ref{norepeat}, all elements of $G^2$ are exactly the substrings of length $2$ of the sequence $\widehat{u}'$, together with the pair $(u_{n^2}, u_{l + 1})$. Considering all $i = 1, \ldots, n^2$, except $i = 1$ and $i = l + 1$, we note that each $u_i$ as an element of the sequence occurs in exactly two substrings, $(u_{i-1}, u_i)$ and $(u_i, u_{i+1})$, in one of them in the first position and in the other in the second position. The element $u_1$ occurs only in the pair $(u_1, u_2)$, while the element $u_{l + 1}$ either occurs, when $n^2 - l > 1$, in three pairs $(u_l, u_{l + 1})$, $(u_{n^2}, u_{l + 1})$ and $(u_{l + 1}, u_{n^2})$, or, when $n^2 - l = 1$, in two pairs $(u_l, u_{l + 1})$, $(u_{n^2}, u_{n^2})$, but in the latter case it occurs twice in the second pair. Thus, if $u_1 \ne u_{l + 1}$, we obtain a different number of occurrences of the element $u_1 \in G$ in the pairs from $G^2$ in the left and right positions, a contradiction.
    
\end{proof}

\begin{mythm}\label{perp}
    Let a UFS-groupoid $(G, *)$ of cardinality $n > 1$ with a singly infinite strictly preperiodic universal Fibonacci sequence $\widehat{u}$ be given, with preperiodic part $(u_1, \ldots, u_l)$ and periodic part $(u_{l + 1}, \ldots, u_{n^2})$. Then there exists exactly one UFS-groupoid $(G, *')$ with a periodic universal Fibonacci sequence $\widehat{u}'$ that is obtained by changing the operation $*$ on a single pair of elements, and this pair together with the change of the operation on it is uniquely determined.
\end{mythm}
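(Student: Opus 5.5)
The plan is to prove existence by an explicit construction, using Lemma \ref{preperidoicfirst} and Theorem \ref{norepeat}, and then to obtain uniqueness by applying Theorem \ref{preperiodic} in reverse.

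\textbf{Existence.} By Lemma \ref{preperidoicfirst} we have $u_1 = u_{l+1}$. Consider the cyclic sequence $(u_1, \ldots, u_l, u_{l+1}, \ldots, u_{n^2})$. Its length-$2$ substrings are:
\begin{itemize}
\item the pairs $(u_i, u_{i+1})$ for $1 \le i < n^2$;
\item the wrap-around pair $(u_{n^2}, u_1) = (u_{n^2}, u_{l+1})$.
\end{itemize}
By Theorem \ref{norepeat}(2), these are precisely the preperiodic pairs, the junction pair and the cyclic pairs of the periodic part of $\widehat{u}$. They are pairwise distinct and together exhaust $G^2$. Hence this cycle is a de Bruijn sequence $B(n,2)$. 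By Theorem \ref{ufsperde}, the doubly infinite periodic sequence with this cycle is the universal Fibonacci sequence of some groupoid $(G, *')$.

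Next I compare $*'$ with $*$ by reading off successors (Theorem \ref{uniqueinv}). Every pair $(u_i, u_{i+1})$ with $i \le n^2 - 2$ has the same successor $u_{i+2}$ in both sequences. The pair $(u_{n^2-1}, u_{n^2})$ is followed by $u_{l+1} = u_1$ in both. The only pair left is $(u_{n^2}, u_{l+1})$:
\begin{itemize}
\item in $\widehat{u}$ it is followed by the next element of the periodic cycle, $u_{l+2}$ (read modulo the cycle, so this is $u_{n^2}$ when $n^2 - l = 1$);
\item in the new cycle it is followed by $u_2$.
\end{itemize}
These values differ. Indeed, $(u_1, u_2)$ and $(u_{l+1}, u_{l+2})$ are distinct pairs by Theorem \ref{norepeat}, and $u_1 = u_{l+1}$, so $u_2 \ne u_{l+2}$. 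Therefore $*'$ differs from $*$ on exactly one pair. The degenerate cases $l = 1$ and $n^2 - l = 1$ need only a brief check that the index bookkeeping still holds.

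\textbf{Uniqueness.} Suppose $(G, *'')$ has a periodic universal Fibonacci sequence $\widehat{v}$ and differs from $*$ on a single pair $(g_1, g_2)$. Then $*$ is obtained from $*''$ by redefining $*''$ on $(g_1, g_2)$, so Theorem \ref{preperiodic} applies with the roles reversed. After a cyclic shift, the cycle of $\widehat{v}$ is $(v_1, \ldots, v_{n^2})$ with $v_{n^2-1} = g_1$ and $v_{n^2} = g_2$. The universal Fibonacci sequence of $*$ then has preperiodic part $(v_{l'}, v_1, \ldots, v_{l'-1})$ and periodic part $(v_{n^2}, v_{l'+1}, \ldots, v_{n^2-1})$.

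By Theorem \ref{unique}, this sequence is $\widehat{u}$. Matching the two descriptions gives:
\begin{itemize}
\item $l' = l$;
\item the cycle $(v_1, \ldots, v_{n^2})$ equals $(u_2, \ldots, u_l, u_{l+1}, \ldots, u_{n^2}, u_1)$ up to the matching of indices;
\item $(g_1, g_2) = (v_{n^2-1}, v_{n^2})$ is the wrap-around pair $(u_{n^2}, u_{l+1})$ of the periodic cycle of $\widehat{u}$;
\item the new value $g_1 *'' g_2 = v_1 = u_2$ is forced.
\end{itemize}
Hence both the pair and the new value are determined by $\widehat{u}$, and $*'' = *'$.

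The main obstacle is this index matching in the uniqueness step. One has to check carefully that the decomposition produced by Theorem \ref{preperiodic} agrees term by term with the given one, including the small cases $l = 1$ and $n^2 - l = 1$. As an alternative, I would argue directly. The changed pair must lie in the periodic cycle of $\widehat{u}$, since otherwise the periodic suffix would remain a Fibonacci sequence and would omit the preperiodic pairs, contradicting periodicity. It must also be the pair that closes the cycle, since changing any other cyclic pair splits the cycle. The new value must then be the successor that splices the preperiodic part into the cycle, which is $u_2$.
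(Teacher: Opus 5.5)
Your proposal is correct and follows the paper's proof. Using $u_1=u_{l+1}$, your cycle $(u_1,\ldots,u_{n^2})$ is a cyclic shift of the paper's cycle $(u_2,\ldots,u_l,u_1,u_{l+2},\ldots,u_{n^2},u_{l+1})$, the modified pair is the same wrap-around pair $(u_{n^2},u_{l+1})$, and uniqueness comes exactly as in the paper from applying Theorem~\ref{preperiodic} in reverse together with Theorem~\ref{unique}. Your check that $u_2\neq u_{l+2}$ (so the operation genuinely changes) and your explicit index matching fill in details the paper leaves as ``easy to see''.
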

\begin{proof}

    By Lemma \ref{preperidoicfirst}, we have $u_1 = u_{l + 1}$.

Consider the periodic sequence with cycle
$$(u_2, \ldots, u_l, u_1, u_{l + 2}, \ldots, u_{n^2}, u_{l + 1}).$$
All substrings of length $2$ of its cycle are
$$(u_2, u_3), \ldots, (u_l, u_1) = (u_l, u_{l + 1}), (u_{l + 1}, u_{l + 2}), \ldots, (u_{n^2}, u_{l + 1}) = (u_{n^2}, u_1).$$
By Theorem \ref{norepeat}, we obtain that this sequence is a universal Fibonacci sequence of some groupoid over $G$. It is easy to see that the operation of this groupoid coincides with $*$ on all pairs of elements except $(u_{n^2}, u_1)$.

    Now suppose that a UFS-groupoid $(G, *')$ with a periodic universal Fibonacci sequence can be obtained from the groupoid $(G, *)$ by changing the operation on a single pair of elements. Then the groupoid $(G, *)$ can also be obtained from $(G, *')$ by the reverse change of the operation on a single pair of elements by Theorem \ref{preperiodic}. Moreover, for a given $l$, each element of the sequence $\widehat{u}$ is uniquely an element of the cycle of the universal Fibonacci sequence for $(G, *)$ with a certain index. Hence this cycle itself is uniquely determined by $\widehat{u}$, and therefore the groupoid itself is uniquely determined by Theorem \ref{unique}.

\end{proof}

Thus, all UFS-groupoids with a periodic universal Fibonacci sequence are described by de Bruijn sequences, and UFS-groupoids with a singly infinite strictly preperiodic universal Fibonacci sequence are uniquely obtained from them by arbitrarily changing the operation on an arbitrary single pair of elements.

Using Theorem \ref{debrco}, we can also describe the number of finite UFS-groupoids.

\begin{mythm}\label{fincount}
    Let $|G| = n > 1$. Then on the set $G$ there exist exactly

    \begin{enumerate}
    \item $$\dfrac{(n!)^{n}}{n^2}$$ distinct UFS-groupoids with a periodic universal Fibonacci sequence.
    \item $$(n!)^{n} (n - 1)$$ distinct UFS-groupoids with a singly infinite strictly preperiodic universal Fibonacci sequence.
    \item $$\dfrac{(n!)^{n} (1 + n^2(n - 1))}{n^2}$$ distinct UFS-groupoids.
\end{enumerate}
\end{mythm}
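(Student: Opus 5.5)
Item 1 comes from Theorem \ref{ufsperde} together with the uniqueness statement of Theorem \ref{unique}. A UFS-groupoid with periodic universal Fibonacci sequence on $G$ determines its universal Fibonacci sequence uniquely up to a cyclic shift of the cycle, by Theorem \ref{unique}. Conversely, by Theorem \ref{uniqueinv} the groupoid is recovered from the sequence. So such groupoids are in bijection with cyclic sequences over $G$ that are cycles of periodic universal Fibonacci sequences, and by Theorem \ref{ufsperde} these are exactly the de Bruijn sequences $B(n,2)$. Theorem \ref{debrco} with $k=n$ and order $2$ then gives $(n!)^{n}/n^{2}$.

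For item 2 I will count pairs (periodic UFS-groupoid $(G,*)$, single-cell redefinition) and show that the map to strictly preperiodic UFS-groupoids is a bijection. By Theorem \ref{preperiodic}, every choice of a periodic UFS-groupoid $(G,*)$, a cell $(g_1,g_2)\in G^2$, and a value $g_3\neq g_1*g_2$ produces a UFS-groupoid $(G,*')$. Its universal Fibonacci sequence is singly infinite and strictly preperiodic, because the theorem exhibits it explicitly as such with a nonempty preperiodic part of length $l\ge 1$. The number of such choices is $\frac{(n!)^n}{n^2}\cdot n^2\cdot (n-1)=(n!)^n(n-1)$.

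Surjectivity comes from Theorem \ref{perp}: every strictly preperiodic singly infinite UFS-groupoid arises from some periodic one by one redefinition. For injectivity, suppose two choices $((G,*_1),(g_1,g_2),g_3)$ and $((G,*_2),(h_1,h_2),h_3)$ give the same $(G,*')$. Then $(G,*')$ is obtained by a single-cell change from both $(G,*_1)$ and $(G,*_2)$. Reversing the change, each of $(G,*_1)$ and $(G,*_2)$ is a periodic UFS-groupoid obtained from $(G,*')$ by changing the operation on a single pair. The uniqueness part of Theorem \ref{perp} says that this periodic groupoid, the pair and the changed value are all uniquely determined. Hence $*_1=*_2$, $(g_1,g_2)=(h_1,h_2)$ and $g_3=h_3$. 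This injectivity check is the main point needing care: I must make sure Theorem \ref{perp}'s uniqueness really covers all single-cell modifications, not only those of the specific form built in its proof. As stated, it does.

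Item 3 then follows from Theorem \ref{periodic}. A finite UFS-groupoid has a universal Fibonacci sequence that is either periodic or singly infinite strictly preperiodic, and these classes are disjoint; for the non-periodic case this uses the uniqueness in Theorem \ref{unique}. Adding the two counts gives $\frac{(n!)^n}{n^2}+(n!)^n(n-1)=\frac{(n!)^n(1+n^2(n-1))}{n^2}$.
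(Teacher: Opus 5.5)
Your proposal is correct and follows the paper's own argument: item 1 from Theorems \ref{ufsperde} and \ref{debrco}, item 2 by counting the $n^2(n-1)$ single-cell redefinitions of each periodic UFS-groupoid via Theorems \ref{preperiodic} and \ref{perp}, and item 3 by adding the two counts using Theorem \ref{periodic}. The only difference is that you spell out three steps the paper leaves implicit: the bijection in item 1, the injectivity in item 2 (via the uniqueness clause of Theorem \ref{perp}), and the disjointness of the two classes in item 3 (via Theorem \ref{unique}).
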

\begin{proof} We prove all cases.
\begin{enumerate}
    \item This follows from Theorems \ref{ufsperde} and \ref{debrco}.
    \item This follows from the previous item and Theorems \ref{preperiodic} and \ref{perp}. It follows from the fact that when changing the operation of a UFS-groupoid with a periodic universal Fibonacci sequence on a single pair of elements, we can choose $n^2$ pairs on which to change the result of the operation, and choose one of $(n - 1)$ options for what to replace the result of the operation with. Thus, there are exactly $n^2 (n - 1)$ times more UFS-groupoids with a singly infinite strictly preperiodic universal Fibonacci sequence than with a periodic one.
    \item This follows from the previous two items and Theorem \ref{periodic}.
\end{enumerate}
\end{proof}

Note that the numbers given in Theorem \ref{fincount} reflect the number of all possible groupoids on the set $G$ that are formally distinct in terms of their operation, even if they are isomorphic.

\section{Properties of UFS-groupoids and their general classification}\label{prop}

In this section, we establish a number of algebraic properties of UFS-groupoids, and at the end of the section we present their general classification, which brings together a number of results of the paper.

\subsection{Existence of elements, non-intersection with classical algebraic objects, and Fibonacci length}

\begin{mythm}\label{idem}
    In a UFS-groupoid $(G, *)$, there exists at most one idempotent element. If an idempotent element $i$ exists and $|G| > 1$, then the universal Fibonacci sequence of this groupoid is strictly preperiodic and its periodic part is $(i)$.
\end{mythm}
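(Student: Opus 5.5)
The plan is to read off idempotents directly from the universal Fibonacci sequence. If $i * i = i$, then $Fib(i, i) = (i, i, i, \ldots)$ is constant, so it is periodic with period $1$ and cycle $(i)$. By the definition of a universal Fibonacci sequence, $Fib(i,i)$ is a suffix of $\widehat{u}$. Then Lemma \ref{persuffix} shows that $\widehat{u}$ is either periodic or strictly preperiodic, and in either case its cycle (respectively periodic part) coincides, up to cyclic shift, with the cycle $(i)$ of $Fib(i,i)$.

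Uniqueness of the idempotent follows at once. Suppose $i$ and $j$ are both idempotent. Then the periodic suffixes $Fib(i,i)$ and $Fib(j,j)$ of $\widehat{u}$ both determine the cycle or periodic part of $\widehat{u}$, and by Lemma \ref{persuffix} both $(i)$ and $(j)$ are that cycle up to a cyclic shift. A cyclic shift of a length-one cycle is trivial, so $i = j$. If one prefers a more hands-on argument, Theorem \ref{norepeat} also works: the pair $(i,i)$ lies in the periodic part, and a periodic part of length $1$ contains only one pair $(x,x)$.

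It remains to exclude the periodic case when $|G| > 1$. If $\widehat{u}$ were periodic, its period would be $1$, so $\widehat{u}$ would be the constant sequence $(\ldots, i, i, \ldots)$. By Theorem \ref{norepeat}(1), its cycle pairs exhaust $G^2$, so $G^2 = \{(i,i)\}$ and $|G| = 1$, a contradiction. Hence $\widehat{u}$ is strictly preperiodic, and its periodic part is $(i)$.

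The only delicate point is making sure Lemma \ref{persuffix} gives equality of the periodic part itself, not just of the period; that is exactly its stated content, so I expect no real obstacle. The argument works uniformly for singly and doubly infinite $\widehat{u}$.
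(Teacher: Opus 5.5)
Your proposal is correct and is essentially the paper's argument: the constant suffix $Fib(i,i)$ forces the periodic part of $\widehat{u}$ to be $(i)$; the purely periodic (hence constant) case is ruled out for $|G|>1$ because every pair in $G^2$ must occur in $\widehat{u}$; and uniqueness of the idempotent follows from uniqueness of the periodic part. The only difference is cosmetic: you route the first step through Lemma \ref{persuffix}, while the paper argues directly that $\widehat{u}$ is constantly $i$ after an occurrence of $(i,i)$, which exists by Lemma \ref{twoel}.
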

\begin{proof}
    Let $i$ be an idempotent element in a UFS-groupoid $(G, *)$. Then by Lemma \ref{twoel}, the universal Fibonacci sequence $\widehat{u}$ contains the substring $(i, i)$.

Since the suffix of $\widehat{u}$ starting with $(i, i)$ coincides with $Fib(i, i)$, and $i * i = i$, from the occurrence of the substring $(i, i)$ onward the sequence $\widehat{u}$ becomes constantly equal to $i$:
$$\widehat{u} = (\ldots, i, i, i, i, \ldots)$$
Consequently, the sequence is either identically equal to $i$, which can only happen when $|G| = 1$ (since by Lemma \ref{twoel} all pairs $(a, b) \in G^2$ must occur as substrings), or it is strictly preperiodic with periodic part equal to $(i)$.

Moreover, there cannot be another idempotent element because the periodic part is uniquely determined.
\end{proof}

\begin{mylemma}\label{neutral}
    A UFS-groupoid $(G, *)$ with $|G| > 1$ cannot have a neutral element.
\end{mylemma}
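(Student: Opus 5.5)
Suppose, for contradiction, that $e$ is a neutral element in a UFS-groupoid $(G,*)$ with $|G|>1$, and let $\widehat{u}$ be its universal Fibonacci sequence. A neutral element satisfies $e*e=e$, so it is idempotent. Theorem \ref{idem} then tells us that $\widehat{u}$ is strictly preperiodic with periodic part $(e)$. Hence $\widehat{u}$ ends in a constant tail $(\ldots,e,e,e,\ldots)$, and by Theorem \ref{norepeat} the only pair from the periodic part is $(e,e)$.

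Now take any $a\in G$ with $a\neq e$, which exists because $|G|>1$. Consider $Fib(a,e)$. Since $a*e=a$ and $e*a=a$, the sequence reads
$$Fib(a,e)=(a,e,a,a*a,\ldots).$$
Also $Fib(e,a)=(e,a,a,\ldots)$. By the definition of a universal Fibonacci sequence, both are suffixes of $\widehat{u}$.

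The key point is that $Fib(a,e)$ is eventually constantly equal to $e$, since it is a suffix of $\widehat{u}$. So the pair $(e,e)$ appears in it at some position, and the term immediately before that pair, call it $x$, satisfies $x*e=e$. Because $e$ is neutral, $x*e=x$, which forces $x=e$. Repeating this backward step shows that every earlier term also equals $e$. In particular the first term $a$ would equal $e$, a contradiction.

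We can phrase this more cleanly. Let $(x,e,e)$ be any substring of $\widehat{u}$. Then $x*e=e$ gives $x=e$. So once the triple $(e,e,e)$ starts, the sequence is also constant equal to $e$ going backward. Therefore $\widehat{u}$ is identically $e$ and contains only the pair $(e,e)$, contradicting Lemma \ref{twoel} because $|G|>1$. The only point needing care is that the pair $(e,e)$ really occurs, at the start of the periodic tail. This follows from Theorem \ref{idem}, or directly from Lemma \ref{twoel}.
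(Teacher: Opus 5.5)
Your argument is correct and is essentially the paper's: the paper also uses Theorem~\ref{idem} to get the constant tail $(e, e, e, \ldots)$, then observes that the element $a \neq e$ immediately preceding it would satisfy $a = a * e = e$. That is exactly your backward step $(x, e, e) \Rightarrow x = x * e = e$, applied once at the boundary rather than by induction. One harmless slip: $Fib(a, e) = (a, e, a, a, a * a, \ldots)$, since the fourth term is $e * a = a$, but your final argument does not use this computation.
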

\begin{proof}
    Suppose the contrary, let a UFS-groupoid $(G, *)$ with universal Fibonacci sequence $\widehat{u}$ have a neutral element $e \in G$.

By Theorem \ref{idem}, $\widehat{u}$ is strictly preperiodic and its periodic part is $(e)$.

Since $G$ contains other elements, and therefore by Lemma \ref{twoel} they occur in $\widehat{u}$, there exists some $a \ne e$ such that
$$\widehat{u} = (\ldots, a, e, e, e, \ldots).$$
But then $a = a * e = e$, a contradiction.
\end{proof}

\begin{mylemma}\label{zero}
    A UFS-groupoid $(G, *)$ with $|G| > 1$ cannot have a zero element.
\end{mylemma}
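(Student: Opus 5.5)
We argue by contradiction, mirroring the proof of Lemma \ref{neutral}. Suppose a UFS-groupoid $(G, *)$ with $|G| > 1$ and universal Fibonacci sequence $\widehat{u}$ has a zero element $0 \in G$. Since $0 * 0 = 0$, the element $0$ is idempotent. Theorem \ref{idem} then tells us that $\widehat{u}$ is strictly preperiodic with periodic part $(0)$, so $\widehat{u}$ ends in an infinite constant tail $(\ldots, 0, 0, 0, \ldots)$. Moreover, by Theorem \ref{norepeat}, the pair $(0,0)$ occurs exactly once as a substring outside the repeated periodic cycle: inside the periodic part it is the only pair.

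Next we exploit the absorbing property from the other side. Take any $a \ne 0$ (it exists since $|G|>1$). By Lemma \ref{twoel}, the pair $(a, 0)$ occurs as a substring of $\widehat{u}$, say $\widehat{u} = (\ldots, a, 0, c, \ldots)$ with $c = a * 0 = 0$. Hence the suffix $Fib(a, 0)$ equals $(a, 0, 0, \ldots)$. Since $0*0=0$, it is constant $0$ from its second term on. Similarly, $(0, a)$ occurs and $0 * a = 0$, so $Fib(0, a) = (0, a, 0, a*0, \ldots) = (0, a, 0, 0, 0, \ldots)$.

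The contradiction comes from counting occurrences of the pair $(0, 0)$, or equivalently from the structure of $\widehat{u}$ near its periodic tail. Every suffix starting with $(a,0)$ is immediately followed by $0$. So the pair $(a,0)$ sits at the junction, i.e. it is the unique pair $(u_{k-1}, u_k)$ of Theorem \ref{norepeat}(2b) with $u_k = 0$ starting the tail. This holds for \emph{every} $a \ne 0$. All such pairs $(a,0)$ are distinct pairs, yet there is only one junction pair, and by Theorem \ref{norepeat} these pairs cannot lie in the periodic part, which contains only $(0,0)$. Concretely, each occurrence of $(a,0)$ is followed by $(0,0)$, whose occurrence is unique outside the tail, forcing the element immediately before that first $0$ of the tail to equal every such $a$.

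If $|G| \ge 3$, two distinct nonzero elements $a \ne a'$ give an immediate contradiction. If $|G| = 2$, say $G=\{0,a\}$, the junction pair must be $(a,0)$. The pair $(0,a)$ must also occur, and the term after it is $0*a=0$, so the substring $(0,a,0)$ appears. That gives a second occurrence of $(a,0)$, contradicting the distinctness in Theorem \ref{norepeat}. The main obstacle is handling this small case cleanly. The uniform argument is that $(0,a)$ is always followed by $0$, so $(a,0)$ recurs; since it lies outside the periodic part, that violates Theorem \ref{norepeat}. This last argument in fact covers all $|G|>1$ at once, and it is the one we will write up.
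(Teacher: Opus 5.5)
Your argument for $|G|\ge 3$ is sound. Since $x*0=0$ and $0*0=0$, every occurrence of a pair $(x,0)$ with $x\ne 0$ is followed by the all-zero tail. So $x$ must be the last nonzero term of $\widehat{u}$, and two distinct nonzero elements cannot both be that term.

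The gap is in the case $|G|=2$, and in the ``uniform argument'' you say you will write up. From $0*a=0$ you correctly get an occurrence of $(0,a,0)$, hence an occurrence of $(a,0)$. Nothing makes it a \emph{second} occurrence: it can simply be the unique junction occurrence, as in the configuration $(\ldots,0,a,0,0,0,\ldots)$. Concretely, for $G=\{0,a\}$ the sequence $(0,a,0,0,0,\ldots)$ is consistent with every fact your argument uses. It respects $0*a=a*0=0*0=0$, it contains $(0,a)$ and $(a,0)$, and it meets the distinctness conditions of Theorem~\ref{norepeat}: preperiodic pair $(0,a)$, junction pair $(a,0)$, periodic pair $(0,0)$. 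The only thing it violates is that $(a,a)$ never appears, and your argument never looks at $(a,a)$. So it cannot produce a contradiction for $|G|=2$, and the claim that it ``covers all $|G|>1$'' is false.

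The missing ingredient is the pair $(a,a)$. The paper's route is this: $(0,a)$ can have no predecessor $b$ in $\widehat{u}$, since that would force $b*0=a\ne 0$. Hence $(0,a)$ could only be the initial pair of a singly infinite $\widehat{u}$, which would then be $(0,a,0,0,\ldots)$ and miss $(a,a)$. The paper phrases this as $(0,a)$ lying neither to the left nor to the right of an occurrence of $(a,a)$. It works uniformly for all $|G|>1$ and does not need Theorem~\ref{idem}.

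Alternatively, you can stay within your own framework. First, $a*a=a$ is impossible, because the tail after $(a,a)$ would then be constant $a$, clashing with the zero tail. So for $G=\{0,a\}$ we get $a*a=0$. Then both $(0,a)$ and $(a,a)$ are immediately followed by the zero tail. Both would therefore be the pair ending at the last nonzero term, which forces $0=a$, a contradiction.
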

\begin{proof}
    Suppose the contrary, let a UFS-groupoid $(G, *)$ with universal Fibonacci sequence $\widehat{u}$ have a zero element $0 \in G$.

   Since $G$ contains other elements, by Lemma \ref{twoel} there exists $a \ne 0$ and the substrings $(a, a)$ and $(0, a)$ occur in $\widehat{u}$. The substring $(0, a)$ cannot occur to the left of $(a, a)$, because $0 * a = 0$ and
$$\widehat{u} = (\ldots, 0, a, 0, 0, 0, \ldots),$$
after this substring only $0$ will occur. But the substring $(0, a)$ also cannot occur to the right of $(a, a)$, because in that case
$$\widehat{u} = (\ldots, b, 0, a, \ldots),$$
so $b * 0 = a$, a contradiction.
\end{proof}

\begin{mycor}\label{no}
    A UFS-groupoid $(G, *)$ with $|G| > 1$ cannot be a group, a loop, a monoid, or the multiplicative groupoid of a ring.

\end{mycor}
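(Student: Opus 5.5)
The plan is to reduce each of the four structures to one of the two forbidden configurations already ruled out: a neutral element (Lemma \ref{neutral}) or a zero element (Lemma \ref{zero}). Throughout, $(G,*)$ is a UFS-groupoid with $|G|>1$.

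\textbf{Groups, loops, monoids.} By definition, a group, a loop and a monoid each have a neutral element $e$. Lemma \ref{neutral} says a UFS-groupoid with $|G|>1$ has no neutral element. So none of the three is possible, and no further argument is needed.

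\textbf{Multiplicative groupoid of a ring.} Suppose $(G,*)$ is the multiplicative groupoid $(R,\cdot)$ of a ring $R$, with $|R|=|G|>1$. The additive identity $0$ satisfies $0\cdot a=a\cdot 0=0$ for every $a\in R$; this follows from distributivity, since $0\cdot a=(0+0)\cdot a=0\cdot a+0\cdot a$, and similarly on the other side. Hence $0$ is a zero element of $(G,*)$ in the sense of the paper's definition. This contradicts Lemma \ref{zero}.

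I will add a remark for the case where rings are taken to be unital. Then $1$ is also a neutral element, so Lemma \ref{neutral} gives the contradiction as well, but the zero-element argument already covers every ring, unital or not.

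There is no real obstacle here. The only point to state explicitly is the identity $0\cdot a=a\cdot 0=0$ in a ring, because it is what makes the multiplicative groupoid fall under Lemma \ref{zero}.
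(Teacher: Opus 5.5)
Your proposal is correct and matches the paper's argument: the paper gives no separate proof, stating the corollary immediately after Lemmas \ref{neutral} and \ref{zero} as a consequence of them. Groups, loops and monoids are excluded by the neutral element, and rings by the additive zero, which is a multiplicative zero element. Your explicit check that $0\cdot a=a\cdot 0=0$ via distributivity is the only detail the paper leaves unstated.
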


It should be noted that although a group cannot be a UFS-groupoid in the general sense, some groups turn out to be quite close to this.

\begin{myex}\label{grouptwoseq}
    Consider the additive group of residues $(\mathbb{Z}_2, +)$ and the Fibonacci sequence in it:
$$Fib(0, 1) = (0, 1, 1, 0, 1, 1, \ldots).$$

In total, $(\mathbb{Z}_2, +)$ has $4$ Fibonacci sequences: $Fib(0, 0)$, $Fib(0, 1)$, $Fib(1, 0)$, $Fib(1, 1)$, of which only the first is not a suffix of $Fib(0, 1)$.
\end{myex}

Example \ref{grouptwoseq} shows that although the existence of a universal Fibonacci sequence in the general sense is impossible for groups due to the presence of a neutral element $e$ and Lemma \ref{neutral}, one can consider in some groups a sequence that contains as its suffixes all Fibonacci sequences except $Fib(e, e)$. Potentially, by relaxing the conditions, one can consider analogues of universal Fibonacci sequences for groups and other algebraic structures where their existence is impossible.

\begin{mythm}\label{fiblength}

    The Fibonacci length is defined for all pairs $(a, b) \in G^2$ in a UFS-groupoid $(G, *)$ if the universal Fibonacci sequence $\widehat{u}$ is not aperiodic, and equals the period of $\widehat{u}$. If $\widehat{u}$ is aperiodic, then the Fibonacci length is undefined for every pair.
\end{mythm}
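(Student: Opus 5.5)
The plan is to read the definition of Fibonacci length the way the paper sets it up. The Fibonacci length of $(a,b)$ is \emph{the period of $Fib(a,b)$, if it has one}. A strictly preperiodic sequence is also said to have a period, namely the period $n$ of its periodic suffix (see the definition of strictly preperiodic sequences). So ``having a period'' means being either periodic or strictly preperiodic, i.e.\ not aperiodic. This is consistent with the remark after the definition that the Fibonacci length is defined exactly for pairs generating non-aperiodic sequences. Under this reading the proof reduces to comparing the periodicity type of every suffix of $\widehat{u}$ with that of $\widehat{u}$ itself.

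\textbf{Step 1: every $Fib(a,b)$ is a suffix of $\widehat{u}$.} By the definition of a universal Fibonacci sequence (equivalently Lemma \ref{twoel}), for each $(a,b)\in G^2$ we can write $\widehat{u}=(\ldots,u_{k-1},Fib(a,b))$. So it suffices to understand the suffixes of $\widehat{u}$.

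\textbf{Step 2: $\widehat{u}$ not aperiodic.} Suppose $\widehat{u}$ is periodic or strictly preperiodic, with period $n$. Then $\widehat{u}$ has a periodic suffix $\widehat{p}$ of period $n$, and any suffix $\widehat{s}$ of $\widehat{u}$ either contains $\widehat{p}$ as a suffix or is itself a suffix of $\widehat{p}$.
\begin{itemize}
\item In the second case, Lemma \ref{acs} shows that $\widehat{s}$ is periodic with period $n$.
\item In the first case, $\widehat{s}$ has a periodic suffix of period $n$. Hence $\widehat{s}$ is either periodic or strictly preperiodic, and in both cases its period is $n$, by Lemma \ref{acs} applied to the common tail.
\end{itemize}
Lemma \ref{persuffix} makes this cleaner: it shows directly that the period attached to any non-aperiodic $Fib(a,b)$ coincides with the period of $\widehat{u}$. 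Applying this to $\widehat{s}=Fib(a,b)$ for every pair shows that the Fibonacci length is defined for all pairs and equals $n$.

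\textbf{Step 3: $\widehat{u}$ aperiodic.} If $\widehat{u}$ is aperiodic, it has no periodic suffix. Every $Fib(a,b)$ is a suffix of $\widehat{u}$, and every suffix of $Fib(a,b)$ is again a suffix of $\widehat{u}$. So $Fib(a,b)$ has no periodic suffix, hence is aperiodic, and its Fibonacci length is undefined. Alternatively, if some $Fib(a,b)$ were periodic, Lemma \ref{persuffix} would make $\widehat{u}$ non-aperiodic, a contradiction.

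\textbf{Main obstacle.} The only delicate point is the strictly preperiodic case in Step 2. Pairs from the preperiodic part of $\widehat{u}$ (Theorem \ref{norepeat}, item 2(a),(b)) generate sequences that are not periodic in the strict sense. The argument therefore depends on using the paper's notion of the period of a strictly preperiodic sequence. I would state this interpretation explicitly at the start, and then note that the length of the periodic part is the same for all such suffixes, because they share the same maximal periodic suffix.
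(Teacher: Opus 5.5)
Your proposal is correct and follows essentially the same route as the paper: every $Fib(a,b)$ is a suffix of $\widehat{u}$, so its periodicity type and period are inherited from the periodic tail of $\widehat{u}$. The paper compresses this into a citation of Lemmas \ref{twopairs} and \ref{persuffix}, using the same reading of ``period'' for strictly preperiodic sequences. Lemma \ref{persuffix} as stated assumes $Fib(a,b)$ is periodic, so for strictly preperiodic $Fib(a,b)$ it would have to be applied to the periodic suffix, which is itself a Fibonacci sequence; your explicit case split via Lemma \ref{acs} already handles that case directly.
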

\begin{proof}
    For the Fibonacci length to be defined for a pair $(a, b)$, the Fibonacci sequence starting with them must be periodic or strictly preperiodic. By Lemmas \ref{twopairs} and \ref{persuffix}, this happens precisely when $\widehat{u}$ is not aperiodic, and in this case, by Lemma \ref{persuffix}, the Fibonacci length is exactly equal to the period of $\widehat{u}$.
\end{proof}

\subsection{Cyclicity, power associativity, and divisibility}

We now prove an important structural theorem on the structure of UFS-groupoids in terms of generating sets. To this end, a useful auxiliary lemma is needed on the generation of tails of Fibonacci sequences by sets of $1$ and $2$ elements.

\begin{mylemma}\label{suffgen}
    Let a groupoid $(G, *)$ admit a universal Fibonacci sequence $\widehat{u}$. Then all elements of $\widehat{u}$ that occur after some occurrence of the substring $(a, b)$ belong to the $2$-generated subgroupoid $\langle a, b\rangle$. All elements that occur in $\widehat{u}$ after an occurrence of the substring $(a, a)$ belong to the $1$-generated groupoid $\langle a \rangle$.
\end{mylemma}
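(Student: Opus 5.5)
The plan is a simple induction along the universal Fibonacci sequence. Fix an occurrence of the substring $(a, b)$ in $\widehat{u}$, say at positions $k$ and $k+1$, so that $u_k = a$ and $u_{k+1} = b$. By the definition of a universal Fibonacci sequence, the suffix starting at position $k$ is $Fib(a, b)$. Hence for every $m \ge k+2$ we have $u_m = u_{m-2} * u_{m-1}$.

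I will prove by strong induction on $m \ge k$ that $u_m \in \langle a, b \rangle$. The base cases $m = k$ and $m = k+1$ hold because $u_k = a$ and $u_{k+1} = b$ are generators. For the inductive step, if $u_{m-2}$ and $u_{m-1}$ both lie in $\langle a, b\rangle$, then so does their product $u_m = u_{m-2} * u_{m-1}$, since a subgroupoid is closed under $*$. Thus every element from position $k$ onward, and in particular every element occurring after this occurrence of $(a, b)$, lies in $\langle a, b\rangle$. The result does not depend on which occurrence was chosen, since the argument applies to any occurrence.

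The second claim is the special case $b = a$. Here $\langle a, a\rangle = \langle a\rangle$, because the generating set $\{a, a\} = \{a\}$ has a single element. The same induction then places all terms after an occurrence of $(a, a)$ in the $1$-generated subgroupoid $\langle a\rangle$.

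There is no real obstacle. The only point needing care is that the recurrence $u_m = u_{m-2} * u_{m-1}$ holds at every position, including in the doubly infinite case. This follows from item 1 of the definition of a universal Fibonacci sequence: every suffix is a Fibonacci sequence, so the recurrence holds throughout $\widehat{u}$ and not only inside a particular $Fib(u_1, u_2)$.
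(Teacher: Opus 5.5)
Your proposal is correct and follows the same route as the paper: by item 1 of the definition, the suffix starting at the chosen occurrence of $(a,b)$ is $Fib(a,b)$, and each of its terms is obtained from $a$ and $b$ by repeated application of $*$, so it lies in $\langle a, b\rangle$ (and $\langle a, a\rangle = \langle a\rangle$ gives the second claim). The only difference is that you write out the paper's ``successively applying the operation'' step as an explicit strong induction on the position.
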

\begin{proof}

    All elements appearing in $\widehat{u}$ after an occurrence of the substring $(a, b)$ lie in the suffix $Fib(a, b)$. All elements of $Fib(a, b)$ are obtained by successively applying the operation to the elements $a, b$ in a certain way; that is, they lie in $\langle a, b\rangle$. If $a = b$, then these elements lie in $\langle a \rangle$.
\end{proof}

\begin{mythm}\label{cyclic}
     A UFS-groupoid is locally cyclic.

\end{mythm}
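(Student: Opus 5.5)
Let $H=\langle a_1,\ldots,a_k\rangle$ be a finitely generated subgroupoid of a UFS-groupoid $(G,*)$ with universal Fibonacci sequence $\widehat{u}$. We must find a single generator of $H$.

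The plan is to locate all the ordered pairs $(a_i,a_j)$ and $(a_i,a_i)$, $1\le i,j\le k$, in $\widehat{u}$. By Lemma \ref{twoel} each of these pairs occurs as a substring somewhere in $\widehat{u}$. Since the set of pairs is finite, there is an occurrence of some pair that lies furthest to the left among the chosen occurrences. More precisely, we would like a diagonal pair $(c,c)$ with $c\in H$ whose occurrence lies to the left of every generator $a_1,\ldots,a_k$.

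The candidate is $c=a_1$, using the substring $(a_1,a_1)$. The difficulty is that $(a_1,a_1)$ might lie to the right of some $a_j$. To handle this, consider the occurrences of all $k^2$ pairs $(a_i,a_j)$, and let $(a_p,a_q)$ be the one whose occurrence is leftmost. We take this occurrence to be the first occurrence of that pair, which is unique in the non-periodic region by Theorem \ref{norepeat}; in the periodic case any occurrence works after choosing positions suitably.

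The key observation is the following. Among the pairs $(a_i,a_i)$, choose the one whose occurrence is leftmost, say $(a_m,a_m)$. Every $a_j$ appears in the pair $(a_j,a_j)$ at or after the position of $(a_m,a_m)$, hence after the occurrence of $(a_m,a_m)$, except possibly for the first coordinate appearing exactly at that position. By Lemma \ref{suffgen}, every element occurring after the substring $(a_m,a_m)$ lies in $\langle a_m\rangle$. The element $a_j$ itself occurs as the first entry of $(a_j,a_j)$, and the second entry of that pair lies strictly after the start of $(a_m,a_m)$ (or is $a_m$ itself). Therefore $a_j\in\langle a_m\rangle$ for all $j$, so $H\subseteq\langle a_m\rangle\subseteq H$ and $H=\langle a_m\rangle$ is cyclic.

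The main obstacle is making the positions precise. For a doubly infinite $\widehat{u}$ we must ensure that the word ``after'' refers to actual positions in the sequence and that the relevant occurrence exists. In the periodic case any element occurs after any given position, so the same argument works there with an arbitrary choice of occurrence. In the strictly preperiodic and aperiodic cases, occurrences of pairs are unique by Theorem \ref{norepeat}, so the comparison of positions is unambiguous. In all cases the argument requires only that, for each $j$, the occurrence of $(a_j,a_j)$ lies at a position $\ge$ the position of $(a_m,a_m)$, which holds by the choice of $m$ as the leftmost diagonal pair.
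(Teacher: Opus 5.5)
Your argument is correct and is essentially the paper's proof: fix one occurrence of each diagonal pair $(a_i,a_i)$, take the leftmost one $(a_m,a_m)$, and apply Lemma \ref{suffgen} to get $a_j\in\langle a_m\rangle$ for all $j$, so $\langle a_1,\ldots,a_k\rangle=\langle a_m\rangle$. The detour through the leftmost off-diagonal pair $(a_p,a_q)$ and the appeal to uniqueness of occurrences are unnecessary, and the uniqueness claim is inaccurate in the strictly preperiodic case, where the pairs of the periodic part repeat; fixing one occurrence per pair, as the paper does, is enough to make ``leftmost'' well defined in every case.
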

\begin{proof}
Consider the universal Fibonacci sequence $\widehat{u}$ in a UFS-groupoid $(G, *)$ and some finitely generated subgroupoid $\langle A \rangle$ of the groupoid $(G, *)$ with $A = \{a_1, \ldots, a_n\}$.

By Lemma \ref{twoel}, for any $a \in G$ the substring $(a, a)$ occurs in $\widehat{u}$. For each element $a_i$, choose a position $k_i$ where the substring $(a_i, a_i)$ begins in $\widehat{u}$. Without loss of generality, assume $k_1 < k_2 < \ldots < k_n$; otherwise, renumber the $a_i$. Then
$$\widehat{u} = (\ldots, a_1, a_1, \ldots, a_2, a_2, \ldots, \ldots, a_n, a_n, \ldots)$$
and all substrings $(a_i, a_i)$ for $i > 1$ occur after the occurrence of the substring $(a_1, a_1)$ at position $k_1$. Therefore, by Lemma \ref{suffgen}, all $a_i$ lie in $\langle a_1 \rangle$, that is, $\langle A \rangle = \langle a_1 \rangle$.
\end{proof}
\begin{mycor}\label{cyclicfinsub}
    Any finite subgroupoid of a UFS-groupoid is cyclic.
\end{mycor}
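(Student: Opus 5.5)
The plan is to derive the corollary directly from Theorem \ref{cyclic}, with one small extra observation: a finite subgroupoid is automatically finitely generated.

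Let $(A, *)$ be a finite subgroupoid of a UFS-groupoid $(G, *)$, say $A = \{a_1, \ldots, a_n\}$. By the definition of a subgroupoid, $A$ is nonempty and closed under $*$. Every element of $A$ is trivially a (one-factor) product of elements of $A$, so $A$ is a generating set of itself. On the other hand, closure gives $\langle a_1, \ldots, a_n \rangle \subset A$. Hence $\langle a_1, \ldots, a_n\rangle = A$, and $(A, *)$ is $n$-generated, in particular finitely generated.

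By Theorem \ref{cyclic}, $(G, *)$ is locally cyclic. By definition, this means every finitely generated subgroupoid of $(G,*)$ is cyclic. Applying this to $(A, *)$ gives $(A,*) = \langle a \rangle$ for some $a \in A$, which is the claim.

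If one prefers to see the generator explicitly, the proof of Theorem \ref{cyclic} supplies it. Among the occurrences of the substrings $(a_i, a_i)$ in the universal Fibonacci sequence $\widehat{u}$ (they exist by Lemma \ref{twoel}), take the leftmost chosen one. Lemma \ref{suffgen} then shows that all other $a_j$ lie in the cyclic subgroupoid generated by that $a_i$.

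There is no real obstacle here. The only point needing care is noting that finiteness implies finite generation, which comes from closure under $*$. One should also confirm that the cyclic subgroupoid produced by Theorem \ref{cyclic} is $A$ itself and not merely contained in it, which the equality $\langle A \rangle = A$ ensures.
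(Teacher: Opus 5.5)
Your proof is correct and is essentially the paper's own argument. The paper states this corollary as an immediate consequence of Theorem \ref{cyclic}: a finite subgroupoid is generated by its own finitely many elements, so local cyclicity makes it cyclic. Your explicit generator, the element whose chosen $(a_i,a_i)$ occurrence is leftmost, is exactly the one used in the proof of that theorem.
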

\begin{mycor}\label{cyclicall}
    Any finitely generated UFS-groupoid is cyclic.
\end{mycor}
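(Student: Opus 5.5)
The statement to prove is Corollary \ref{cyclicall}: any finitely generated UFS-groupoid is cyclic. I will derive it directly from Theorem \ref{cyclic}.

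Let $(G, *)$ be a finitely generated UFS-groupoid. Then there is a finite set $A = \{a_1, \ldots, a_n\} \subset G$ with $\langle A \rangle = G$. The subgroupoid $\langle A \rangle$ is finitely generated by definition, and it is the whole groupoid $(G, *)$. By Theorem \ref{cyclic}, $(G, *)$ is locally cyclic, so every finitely generated subgroupoid is cyclic. In particular $\langle A \rangle = G$ is cyclic, that is, $G = \langle a \rangle$ for some $a \in G$.

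To make the generator explicit, I would reuse the argument from the proof of Theorem \ref{cyclic}. By Lemma \ref{twoel}, each substring $(a_i, a_i)$ occurs in $\widehat{u}$. Choose the generator $a_j$ whose chosen occurrence of $(a_j, a_j)$ is leftmost. By Lemma \ref{suffgen}, all later elements of $\widehat{u}$ lie in $\langle a_j \rangle$, so every $a_i \in \langle a_j \rangle$. Hence $G = \langle A \rangle = \langle a_j \rangle$.

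There is no real obstacle here. The only point to check is that $G$ counts as a subgroupoid of itself, which holds because the definition of subgroupoid allows $A \subset G$ with equality and requires only that $G$ be nonempty.
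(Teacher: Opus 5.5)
Your proof is correct and matches the paper's approach: the paper states this corollary without a separate proof, as an immediate consequence of Theorem~\ref{cyclic} applied to $G$ itself as a finitely generated subgroupoid. Your explicit choice of the generator $a_j$ with the leftmost occurrence of $(a_j, a_j)$, combined with Lemma~\ref{suffgen}, is exactly the argument used in the proof of that theorem.
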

\begin{mycor}\label{cyclicfin}
    Any finite UFS-groupoid is a cyclic groupoid.
\end{mycor}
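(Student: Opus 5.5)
The statement is Corollary \ref{cyclicfin}: every finite UFS-groupoid is a cyclic groupoid. I will obtain it directly from Theorem \ref{cyclic}, by the same route as Corollary \ref{cyclicall}.

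\emph{Step 1: finite generation.} Let $(G, *)$ be a finite UFS-groupoid. Taking $A = G$ as a generating set shows that $(G, *)$ is finitely generated, since every element is a product consisting of itself alone. So $G$ is a finitely generated subgroupoid of itself.

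\emph{Step 2: apply local cyclicity.} By Theorem \ref{cyclic}, $(G, *)$ is locally cyclic, so every finitely generated subgroupoid is cyclic. Applied to $G = \langle G \rangle$, this gives $G = \langle a \rangle$ for some $a \in G$. Alternatively, one can quote Corollary \ref{cyclicall} directly.

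\emph{Step 3: a concrete generator.} To make the generator explicit, I would repeat the argument of Theorem \ref{cyclic} with $A = G$. By Lemma \ref{twoel}, each diagonal pair $(g, g)$ occurs in $\widehat{u}$. Because $G$ is finite, there is a leftmost chosen occurrence, say of $(a_1, a_1)$. By Lemma \ref{suffgen}, every later element of $\widehat{u}$, in particular every other $g$ (which sits inside its own pair $(g,g)$ further right), lies in $\langle a_1 \rangle$. The finiteness of $G$ is exactly what guarantees that this leftmost occurrence exists; that is the one point needing care, and it is already handled inside Theorem \ref{cyclic}. I therefore expect no real obstacle.
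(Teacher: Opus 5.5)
Your proposal is correct and follows the paper's route: the corollary is read off directly from Theorem \ref{cyclic}, since a finite groupoid is finitely generated (take $A = G$) and so is cyclic by local cyclicity, which is what Corollary \ref{cyclicall} records. Your Step 3 is optional; it re-runs the proof of Theorem \ref{cyclic} with $A = G$, and you correctly note that finiteness is what guarantees the leftmost of the chosen occurrences of the pairs $(g,g)$ exists.
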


Thus, all finitely generated and finite UFS-groupoids, as well as their subgroupoids, are cyclic.

We now prove a theorem on the failure of even a weakened form of associativity in UFS-groupoids.

\begin{mythm}\label{assoc}
    A UFS-groupoid $(G, *)$ cannot be power-associative when $|G| > 1$.

\end{mythm}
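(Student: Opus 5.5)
The plan is to assume $(G,*)$ is power-associative with $|G|>1$ and derive a contradiction by studying a single monogenic subgroupoid $\langle a\rangle$. The key observation is that power associativity makes $\langle a\rangle$ a cyclic semigroup: every element of $\langle a\rangle$ is a well-defined power $a^k$, and $a^k*a^l=a^{k+l}$. So Theorem \ref{semigroup} applies to $\langle a\rangle$. By Theorem \ref{idem} there is at most one idempotent in $G$, so I choose $a$ to be a non-idempotent element; such an element exists since $|G|>1$. Then $\langle a\rangle$ has more than one element.

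Throughout I use two facts. First, by Theorem \ref{ufssuff}, of any two Fibonacci sequences one is a suffix of the other. Second, every Fibonacci sequence generated by elements of $\langle a\rangle$ stays inside $\langle a\rangle$, so it can be computed in the cyclic semigroup model. I then split into the cases of Theorem \ref{semigroup}.

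\emph{Infinite case.} Here $\langle a\rangle\cong(\mathbb N,+)$ via $a^k\mapsto k$. Then $Fib(a,a)$ corresponds to $(1,1,2,3,5,\ldots)$, which is strictly increasing after its second term and so never contains the substring $(2,2)$. Likewise $Fib(a^2,a^2)$ corresponds to $(2,2,4,6,\ldots)$, which never contains $1$. Hence neither sequence is a suffix of the other, contradicting Theorem \ref{ufssuff}.

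\emph{Finite case with period $r>1$.} The kernel $K=\{a^m,\ldots,a^{m+r-1}\}$ is a cyclic group $\cong\mathbb Z_r$ with identity $e$, which is idempotent. Take $g\in K$ with $g\ne e$. In a group, the Fibonacci recurrence can be run backwards, since $f_{n-2}=f_n*f_{n-1}^{-1}$. So $Fib(e,g)$ never contains the substring $(e,e)$, for otherwise running backwards from it would force $g=e$. But $Fib(e,e)$ is constant. Thus $Fib(e,e)$ is not a suffix of $Fib(e,g)$, and $Fib(e,g)$ is not a suffix of the constant sequence $Fib(e,e)$. This again contradicts Theorem \ref{ufssuff}.

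\emph{Finite case with $r=1$.} Now $K=\{e\}$ and $e$ is a zero of $\langle a\rangle$. Since $a$ is not idempotent, $a\ne e$, so $m\ge2$. By Theorem \ref{idem}, $\widehat u$ is strictly preperiodic with periodic part $(e)$. By Theorem \ref{norepeat}, every pair not of the form $(e,e)$ occurs exactly once in $\widehat u$. In particular such a pair has a unique predecessor in $\widehat u$, when it has one.

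I compute $Fib(a^{m-1},a)=(a^{m-1},a,e,e,\ldots)$ and $Fib(a,a^{m-1})=(a,a^{m-1},e,e,\ldots)$.
\begin{itemize}
\item If $m\ge3$, then $a^{m-1}\ne a$. The first sequence shows the pair $(a,e)$ is preceded by $a^{m-1}$. I then compare this with $Fib(e,a)=(e,a,e,\ldots)$, in which $(a,e)$ is preceded by $e$. Since $e\ne a^{m-1}$, the pair $(a,e)$ has two different predecessors, contradicting uniqueness.
\item If $m=2$, then $Fib(a,a)=(a,a,e,e,\ldots)$ shows $(a,e)$ is preceded by $a$. Meanwhile $Fib(e,a)=(e,a,e,e,\ldots)$ shows $(a,e)$ is preceded by $e\ne a$. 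This is the same contradiction.
\end{itemize}

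The main obstacle I expect is this last case. Its difficulty is that one must argue with positions inside $\widehat u$ (the uniqueness of non-periodic pairs) rather than purely inside $\langle a\rangle$. One also has to check carefully that the two sequences exhibiting different predecessors of $(a,e)$ are genuinely both suffixes of $\widehat u$, and that $(a,e)$ is not the periodic pair $(e,e)$. Both checks reduce to $a\ne e$.
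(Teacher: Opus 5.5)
Your proof is correct. It uses the same three-way split as the paper: infinite $\langle a\rangle\cong(\mathbb N,+)$; finite with $|K|>1$; finite with $K=\{e\}$. The argument inside each case, however, is different.

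The paper gets a nontrivial $\langle a\rangle$ by taking two distinct elements and ordering them by the positions of $(a,a)$ and $(b,b)$ in $\widehat{u}$, so that $b\in\langle a\rangle$ by Lemma \ref{suffgen}. It then argues positionally throughout:
\begin{itemize}
\item In the infinite case, the power $a^k$ preceding an occurrence of $(b,b)$ gives $k+l=l$.
\item For $|K|>1$, it uses Theorem \ref{idem} to make the periodic part $(e)$, then finds $c^d\neq e$ with $c^d*e=e$.
\item For $|K|=1$, it needs separate subcases $m>2$ and $m=2$.
\end{itemize}

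You instead pick a non-idempotent $a$. In the first two cases you use only the nesting property of Theorem \ref{ufssuff} plus facts internal to $\langle a\rangle$. The sequences $(1,1,2,3,\ldots)$ and $(2,2,4,6,\ldots)$ are not nested. In a group the recurrence can be run backwards, so $Fib(e,g)$ never reaches $(e,e)$. This is shorter, and it yields slightly more: no UFS-groupoid contains $(\mathbb N,+)$ or any nontrivial group as a subgroupoid.

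Your last case is just a failure of right cancellation: $a^{m-1}*a=e*a=e$ with $a^{m-1}\neq e$, at the pair $(a,e)\neq(e,e)$. Theorem \ref{norepeat} (equivalently Theorem \ref{right}) rules this out. The argument treats $m=2$ and $m\ge 3$ identically, so your sub-split there is unnecessary.

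In fact $Fib(a^{m-1},a)=(a^{m-1},a,e,e,\ldots)$ and $Fib(e,a)=(e,a,e,e,\ldots)$ are themselves not nested. So your whole proof could run on Theorem \ref{ufssuff} alone. The paper's version instead stays in the positional language of Lemma \ref{suffgen}, which it uses elsewhere in that chapter.
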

\begin{proof}
Suppose the contrary. Consider the universal Fibonacci sequence $\widehat{u}$ in a UFS-groupoid $(G, *)$ and two arbitrary distinct elements $a, b \in G$. By Lemma \ref{twoel}, the substrings $(a, a)$ and $(b, b)$ occur in $\widehat{u}$. Let one occurrence of $(a, a)$ be at position $k_1$ and one occurrence of $(b, b)$ at position $k_2$. Assume $k_1 < k_2$; then by Lemma \ref{suffgen} we have $b \in \langle a \rangle$.

By power-associativity, $\langle a \rangle$ is a cyclic semigroup and $b = a^l$ for some natural number $l$. Consider different cases.

\begin{enumerate}
\item Let $\langle a \rangle$ be an infinite cyclic semigroup. Then by Theorem \ref{semigroup}, the semigroup $\langle a \rangle$ is isomorphic to $(\mathbb{N}, +)$. It follows that $a^t * a^d = a^{t + d}$ for all natural numbers $t$ and $d$. Since
$$\widehat{u} = (\ldots, a, a, a^2, \ldots, a^k, b, b, \ldots)$$
there exists $a^k \ne b$ with positive $k$ such that $a^k * b = b$, that is, $a^k * a^l = a^l$. This implies $k + l = l$, which is impossible for positive $k$.
\item Let $\langle a \rangle$ be a finite cyclic semigroup. Since $b \in \langle a \rangle$, $\langle a \rangle$ contains more than one element. By Theorem \ref{semigroup}, $\langle a \rangle$ splits into a tail $T = \{a, a^2, \ldots, a^{m-1}\}$ and a cyclic subgroup $K = \{a^m, \ldots, a^{m+r-1}\}$. Denote the neutral element in $K$ by $e$. Since the neutral element is idempotent, by Theorem \ref{idem} the sequence $\widehat{u}$ has periodic part $(e)$. Consider two cases:
\begin{enumerate}
\item Let $|K| > 1$. Then there exists $c \ne e$ in $K$. By Lemma \ref{twoel}, there is a position in $\widehat{u}$ where the substring $(c, c)$ occurs. By Lemma \ref{suffgen}, starting from this position, all elements belong to the cyclic subgroupoid $\langle c \rangle$. Since the sequence $\widehat{u}$ ends with the periodic part $(e)$, there exists an element $c^d \ne e$ such that
$$\widehat{u} = (\ldots, c, c, c^2, c^3, \ldots, c^d, e, e, e, \ldots)$$
and $c^d * e = e$, from which it follows that $c^d = e$, a contradiction.
\item Let $|K| = 1$, that is, $K$ consists only of $e$ and $r = 1$. Then $e = a^m$. From $|\langle a \rangle| > 2$ it follows that $m > 1$ and $a^{m-1} \ne e$. By Lemma \ref{twoel}, the substring $(a^{m-1}, a^{m-1})$ occurs in $\widehat{u}$. Since $a^{m-1} * a^{m-1} = a^m = e$ and $a^{m-1} * a^m = a^m = e$, we obtain that
$$\widehat{u} = (\ldots, a^{m-1}, a^{m-1}, e, e, \ldots).$$
Thus, the substring $(a^{m-1}, a^{m-1})$ occurs in $\widehat{u}$ to the right of the substring $(a, a)$ or coincides with it in the case $m = 2$.

If $m > 2$, then every element of $\widehat{u}$ between the substring $(a, a)$ and the periodic suffix $(e)$ is some power of $a$, with the powers increasing monotonically:
$$\widehat{u} = (\ldots, a, a, a^2, a^3, \ldots, e, e, e, \ldots).$$
Hence, before $e$ there cannot be two identical elements different from $e$, a contradiction.

If $m = 2$, then $a^{m-1} = a$. Consider the substring $(a, e)$, which occurs in $\widehat{u}$ by Lemma \ref{twoel}. Since
$$\widehat{u} = (\ldots, a, a, e, e, \ldots)$$
we obtain that this substring occurs before $(a, a)$, that is,
$$\widehat{u} = (\ldots, a, e, \ldots, a, a, e, e, \ldots).$$
But $a * e = a * a = a^2 = e$, $e * e = e$, and all elements to the right of the occurrence of the substring $(a, e)$ are equal to $e$, a contradiction.
\end{enumerate}
\end{enumerate}

\end{proof}

Despite being very different from groups, UFS-groupoids possess a slightly weakened form of the right divisibility property, another important property for groupoids.

\begin{mythm}\label{right}
In a UFS-groupoid $(G, *)$ with universal Fibonacci sequence $\widehat{u}$, the right cancellation property holds for all pairs $(a, b) \in G^2$ except possibly one. Moreover,

\begin{itemize}
    \item A UFS-groupoid is a right cancellative groupoid if and only if $\widehat{u}$ is periodic or aperiodic;
    \item In a UFS-groupoid, the right cancellation property fails for exactly one pair $(a, b) \in G^2$ if and only if $\widehat{u}$ is strictly preperiodic with periodic part $(a, b, \ldots)$ or, in the case $a = b$, possibly $(a)$.
\end{itemize}
\end{mythm}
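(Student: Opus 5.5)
The plan is to translate right cancellation into a statement about predecessors of pairs in $\widehat{u}$. The equation $x * a = b$ has solution $x$ exactly when the pair $(x,a)$ is followed by $b$ in $\widehat{u}$. That is, when the substring $(x, a, b)$ occurs in $\widehat{u}$, and then $(a,b)$ occurs with left neighbour $x$.

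By Theorem \ref{uniqueinv}, the value $x * a$ does not depend on which occurrence of $(x,a)$ we use. So the set of solutions of $x * a = b$ is precisely the set of left neighbours of occurrences of $(a,b)$ in $\widehat{u}$. Right cancellation fails for $(a,b)$ if and only if two distinct elements $x_1 \neq x_2$ both occur immediately before $(a,b)$. This is only possible if $(a,b)$ occurs at least twice in $\widehat{u}$, or occurs once with different predecessors, which is impossible. Hence the analysis reduces to the repetition structure of pairs given by Theorem \ref{norepeat}.

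Next I would go through the three cases of Theorem \ref{norepeat}.

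\textbf{Aperiodic case.} Every pair occurs exactly once. Each pair therefore has at most one left neighbour (none if it is the first pair of a singly infinite $\widehat{u}$), so cancellation holds everywhere.

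\textbf{Periodic case.} Every occurrence of a given pair sits at the same position of the cycle. Its left neighbour is therefore the same cycle element every time, so again the solution is unique.

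\textbf{Strictly preperiodic case.} Let the preperiodic part be $(\ldots, u_{k-1})$ and the periodic part $(u_k, \ldots, u_{k+t})$. Pairs in the preperiodic part and the junction pair occur exactly once. A pair $(u_j,u_{j+1})$ inside the periodic cycle always has the same left neighbour, except possibly for the first pair $(u_k,u_{k+1})$ of the periodic part. At its first occurrence that pair is preceded by $u_{k-1}$, and at its later occurrences by $u_{k+t}$. If $u_{k-1} = u_{k+t}$, then the pair $(u_{k-1},u_k)$ would equal the cycle pair $(u_{k+t},u_k)$. This contradicts the disjointness of classes (b) and (c) in Theorem \ref{norepeat}, or equivalently the maximality of the periodic suffix. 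Hence $u_{k-1} \neq u_{k+t}$, and cancellation fails for exactly the pair $(a,b) = (u_k,u_{k+1})$, i.e.\ for the pair with which the periodic part begins.

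When the period is $1$, the periodic part is $(a)$ and the failing pair is $(a,a)$. This accounts for the clause ``possibly $(a)$'' in the statement.

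Combining the cases gives both bullet points: cancellation fails for at most one pair, it fails for none exactly in the periodic and aperiodic cases, and in the strictly preperiodic case it fails for exactly the pair starting the periodic part.

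The main obstacle is handling the boundary carefully. I must verify that the only pair with two distinct predecessors is the first pair of the periodic part, and confirm that its two predecessors really are distinct. The key point is that ``preperiodic part'' is defined via the maximal periodic suffix: if $u_{k-1}=u_{k+t}$, the suffix starting at $k-1$ would also be periodic, contradicting maximality.

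I also need to treat the doubly infinite versus singly infinite distinction, and the degenerate case $|G|=1$. In the singly infinite case the very first pair has no predecessor, and its equation $x*a=b$ is then solved by the other occurrences, or by none, which does not affect cancellation.
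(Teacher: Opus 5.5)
Your proposal is correct and follows essentially the same route as the paper: solutions of $x * a = b$ are identified with left neighbours of occurrences of $(a,b)$ in $\widehat{u}$, and the case analysis via Theorem \ref{norepeat} (periodic, strictly preperiodic, aperiodic) singles out the first pair of the periodic part as the only possible failure. Your explicit check that the two predecessors $u_{k-1}$ and $u_{k+t}$ are distinct, because the junction pair cannot coincide with a cycle pair, supplies a step that the paper only asserts when it names the two solutions $x = c$ and $x = d$.
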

\begin{proof}
    Consider all possible cases of periodicity of $\widehat{u}$. Note that $x * a = b$ means that before some occurrence of $(a, b)$ as a substring in $\widehat{u}$ there is $x$, which follows from applying Lemma \ref{twoel} to $(x, a)$. Hence, the right cancellation property can only fail for those $(a, b) \in G^2$ that occur as substrings in $\widehat{u}$ more than once.

    \begin{enumerate}
    \item If the sequence $\widehat{u}$ is periodic, then by Theorem \ref{ufsperde} all pairs $(a, b) \in G^2$ occur as substrings exactly once in its cycle $(u_1, \ldots, u_n)$. Consequently, for each pair $(a, b)$ the preceding element $x$ in $\widehat{u}$ is uniquely determined, which means that the equation $x * a = b$ has a unique solution.
    \item If the sequence $\widehat{u}$ is strictly preperiodic, then it can be split into a preperiodic part $(\ldots, c)$ and a periodic part $(a, b, \ldots, d)$. If the periodic part has length $1$, we set $a = b = d$; if it has length $2$, then $a = d$. By Lemma \ref{twopairs}, the right cancellation property can only fail for pairs that occur as substrings in the periodic part, since only these occur more than once in $\widehat{u}$. For all such pairs except $(a, b)$, the argument from item 1. applies, while for $x * a = b$ there are two solutions, $x = c$ and $x = d$.
    \item If the sequence $\widehat{u}$ is aperiodic, then by Theorem \ref{norepeat} it contains no substring of length $2$ that occurs more than once.
    \end{enumerate}
\end{proof}

\begin{mythm}\label{rightq}
In a UFS-groupoid $(G, *)$ with universal Fibonacci sequence $\widehat{u}$, the right quasigroup property holds for all pairs $(a, b) \in G^2$ except possibly two. Moreover,

\begin{itemize}
    \item A UFS-groupoid is a right quasigroup if and only if $\widehat{u}$ is periodic or doubly infinite aperiodic.
    \item In a UFS-groupoid, the right quasigroup property fails for exactly one pair $(a, b) \in G^2$ if and only if $\widehat{u}$ satisfies one of the following conditions:
    \begin{enumerate}
        \item $\widehat{u}$ is doubly infinite strictly preperiodic with periodic part $(a, b, \ldots)$ or, in the case $a = b$, possibly $(a)$.
        \item $\widehat{u}$ is singly infinite aperiodic and $\widehat{u} = (a, b, \ldots)$.
    \end{enumerate}
    \item In a UFS-groupoid, the right quasigroup property fails for exactly two pairs from $G^2$ if and only if $\widehat{u}$ is singly infinite strictly preperiodic, starting with one of these pairs, and with the periodic part starting with the other of these pairs.
\end{itemize}
\end{mythm}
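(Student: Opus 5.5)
The argument mirrors Theorem \ref{right}. For a fixed pair $(a,b)\in G^2$, the solutions of $x * a = b$ are exactly the elements $x$ that stand immediately before some occurrence of the substring $(a,b)$ in $\widehat{u}$. Indeed, if $x*a=b$, then by Lemma \ref{twoel} the substring $(x,a)$ occurs and is followed by $b$; conversely, any element preceding an occurrence of $(a,b)$ is such an $x$. So the right quasigroup property fails at $(a,b)$ in exactly two ways. The first is that $(a,b)$ has two distinct predecessors, which is the failure of right cancellation already classified in Theorem \ref{right}. The second is that $(a,b)$ has no predecessor at all, which happens precisely when $\widehat{u}$ is singly infinite and $(a,b)=(u_1,u_2)$; by Theorem \ref{norepeat} the pair $(u_1,u_2)$ is then not preceded by anything at another position. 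For this second case I will argue as follows. If $(u_1,u_2)$ occurred again at a later position, Lemma \ref{twopairs} would make $Fib(u_1,u_2)=\widehat{u}$ periodic. Hence, in the non-periodic case, $(u_1,u_2)$ occurs only once and has no predecessor.

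I will then go through the periodicity classes. If $\widehat{u}$ is periodic, every pair occurs once per cycle with a unique predecessor, even for a singly infinite representative, since by Theorem \ref{unique} we may take the doubly infinite one. So $(G,*)$ is a right quasigroup. If $\widehat{u}$ is doubly infinite and aperiodic, no pair repeats (Theorem \ref{norepeat}) and every position has a predecessor, so again $(G,*)$ is a right quasigroup. If $\widehat{u}$ is doubly infinite and strictly preperiodic, there is no missing predecessor, and Theorem \ref{right} gives exactly one bad pair: the first pair of the periodic part, or $(a,a)$ when the period is $1$. If $\widehat{u}$ is singly infinite and aperiodic, there are no repeats, so the only failure is $(u_1,u_2)$. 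If $\widehat{u}$ is singly infinite and strictly preperiodic, two failures appear: the starting pair $(u_1,u_2)$, which lacks a solution, and the first pair of the periodic part, which has two solutions.

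The main obstacle is showing that these two failing pairs in the last case are genuinely different, so that the count is exactly two rather than one. By Theorem \ref{norepeat}, the pair $(u_1,u_2)$ lies in the preperiodic part or at the junction (when the preperiodic part has length $1$), and these classes are disjoint from the periodic-part pairs. I must still check the degenerate situations, namely a preperiodic part of length $1$ and a periodic part of length $1$ or $2$, using the conventions of the proof of Theorem \ref{right} ($a=b=d$ or $a=d$). Finally, the three "if and only if" characterizations follow because the five classes (periodic; doubly or singly infinite strictly preperiodic; doubly or singly infinite aperiodic) are exhaustive and disjoint and produce failure counts $0,1,2,0,1$ respectively. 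The bound "all except possibly two" is then immediate.
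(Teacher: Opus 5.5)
Your proposal is correct and follows essentially the paper's route. Solutions of $x*a=b$ are the predecessors of occurrences of $(a,b)$ in $\widehat{u}$, so a failure is either two distinct predecessors (the right-cancellation failure handled via Theorem \ref{right}) or no predecessor at all (the initial pair of a singly infinite non-periodic $\widehat{u}$), and you then run through the periodicity classes as the paper does. Your explicit checks that the singly infinite periodic case has no missing predecessor, and that $(u_1,u_2)$ differs from the first pair of the periodic part (it occurs once, that pair infinitely often), fill in points the paper leaves implicit.
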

\begin{proof}
    Consider all possible cases of periodicity types of $\widehat{u}$. Note that $x * a = b$ means that before some occurrence of $(a, b)$ as a substring in $\widehat{u}$ there is $x$, which follows from applying Lemma \ref{twoel} to $(x, a)$. Hence, the right quasigroup property can only fail for those $(a, b) \in G^2$ that occur as substrings in $\widehat{u}$ more than once, or with which a singly infinite $\widehat{u}$ begins.

    \begin{enumerate}
        \item If $\widehat{u}$ is periodic, the statement follows from the proof of Theorem \ref{right}. If $\widehat{u}$ is doubly infinite aperiodic, then by Theorem \ref{norepeat} it contains no substrings of length $2$ that occurs more than once, and by definition it has no beggining.
        \item If $\widehat{u}$ is doubly infinite strictly preperiodic, the statement follows from the proof of Theorem \ref{right}. If $\widehat{u}$ is singly infinite aperiodic, then by Theorem \ref{norepeat} it contains no substrings of length $2$ that occur more than once, and it has exactly one pair with which it begins.
        \item If the sequence $\widehat{u}$ is singly infinite strictly preperiodic, then it has one pair with which it begins, together with a second pair analogous to the proof of Theorem \ref{right}.
    \end{enumerate}
\end{proof}

\subsection{Subgroupoids, homomorphic images, and direct products}

It turns out that any subgroupoid of a UFS-groupoid is also a UFS-groupoid.

\begin{mythm}\label{sub}
Let $(G, *)$ be a UFS-groupoid with universal Fibonacci sequence $\widehat{u}$ and let $(G', *)$ be a proper subgroupoid of $(G, *)$. Then $(G', *)$ is also a UFS-groupoid, and its universal Fibonacci sequence is singly infinite and is a suffix of $\widehat{u}$.
\end{mythm}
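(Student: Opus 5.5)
Our plan is to locate inside $\widehat{u}$ a suffix that lies entirely in $G'$ and contains every pair of $G'^2$, and then check that this suffix is a universal Fibonacci sequence of $(G', *)$ via Theorem \ref{uniqueinv}.

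\textbf{Step 1: the Fibonacci sequences of $(G',*)$.} For $a,b\in G'$ the sequence $Fib(a,b)$ computed in $G'$ coincides with the one computed in $G$, because $G'$ is closed under $*$. So every Fibonacci sequence of $(G',*)$ is a suffix of $\widehat{u}$ whose terms all lie in $G'$. Conversely, any suffix of $\widehat{u}$ whose first two terms lie in $G'$ is a Fibonacci sequence of $G'$ and stays in $G'$ (Lemma \ref{suffgen}).

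\textbf{Step 2: the set of admissible positions is upward closed.} Let $P$ be the set of positions $k$ with $u_k,u_{k+1}\in G'$. Then $P$ is nonempty and closed upward: if $k\in P$, then $k+1\in P$. Hence either $P$ is all positions of $\widehat{u}$, or $P$ has a least element $k_0$.

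In the first case every element of $G$ appears in $\widehat{u}$ by Lemma \ref{twoel}, so $G\subset G'$. This contradicts properness.

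So $P=\{k\ge k_0\}$, and we set $\widehat{u}'=(u_{k_0},u_{k_0+1},\ldots)$, a singly infinite suffix of $\widehat{u}$.

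\textbf{Step 3: $\widehat{u}'$ is universal for $G'$.} Every pair $(a,b)\in G'^2$ occurs in $\widehat{u}$ at some position of $P$, hence inside $\widehat{u}'$. Every suffix of $\widehat{u}'$ is a suffix of $\widehat{u}$ and therefore equals $Fib$ of its first two terms. Thus $\widehat{u}'$ satisfies both items of the definition of a universal Fibonacci sequence for $(G',*)$; equivalently, the conditions of Theorem \ref{uniqueinv} hold, since coincidence of suffixes starting with the same pair is inherited from $\widehat{u}$.

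\textbf{Main obstacle.} The delicate point is Step 2: excluding the case where admissible positions extend infinitely to the left. That requires the doubly infinite case, and it is resolved exactly by the properness argument above. I would also check the degenerate situation where $\widehat{u}$ is periodic. There $P$ is a union of full period classes, and upward closure together with periodicity forces $P$ to be everything, so periodic UFS-groupoids have no proper subgroupoids. This is consistent with the statement, which then holds vacuously.
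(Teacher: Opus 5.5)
Your proposal is correct and follows essentially the same route as the paper. The paper takes the leftmost occurrence in $\widehat{u}$ of a pair from $G'^2$, rules out its nonexistence by properness via Lemma \ref{suffgen}, and applies Theorem \ref{uniqueinv} to the resulting suffix; that leftmost occurrence is exactly your least element $k_0$ of the upward-closed set $P$, and the upward-closure formulation is just a slightly tidier packaging of the same argument.
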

\begin{proof}
    Consider the sequence $\widehat{u}$ and find in it the leftmost substring $(a, b)$ such that $a$ and $b$ lie in $G'$.

Suppose no such substring exists. Then $\widehat{u}$ is doubly infinite and pairs of the form $(a, b)$ with $a, b \in G'$ occur arbitrarily far to the left. For an arbitrary element $g \in G$, by Lemma \ref{twoel} choose some position at which it occurs in $\widehat{u}$, and find a pair $(a, b)$ with $a, b \in G'$ to the left of this position. Then by Lemma \ref{suffgen} the element $g$ belongs to $\langle a, b \rangle \subset G'$. Hence every element $g \in G$ lies in $G'$, a contradiction.

Thus, in $\widehat{u}$ there exists a suffix $\widehat{u}'$ starting with a substring $(a, b)$ where $a, b \in G'$, and to the left of this suffix there are no pairs from $G'$. All elements of $\widehat{u}'$ lie in $\langle a, b \rangle$, and hence in $G'$. By Lemma \ref{twoel}, all pairs of elements from $G'$ occur in $\widehat{u}$ and, by the definition of $(a, b)$, cannot start to the left of $\widehat{u}'$, therefore, they occur in $\widehat{u}'$. Since $\widehat{u}'$ is a suffix of $\widehat{u}$, any two of its suffixes that begin with the same pair coincide. Then by Theorem \ref{uniqueinv}, the sequence $\widehat{u}'$ is a universal Fibonacci sequence for $(G', *)$.
\end{proof}

The UFS-groupoid in Example \ref{exid} in the next chapter contains a one-element UFS-groupoid as a subgroupoid, so UFS-groupoids can have proper subgroupoids. However, in the case of periodic or singly infinite aperiodic universal Fibonacci sequences, this cannot happen.

\begin{mythm}\label{nosubufs}
A UFS-groupoid $(G, *)$ with a periodic or singly infinite aperiodic universal Fibonacci sequence cannot have a proper subgroupoid.

\end{mythm}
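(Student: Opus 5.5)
We argue by contradiction and reduce everything to Theorem \ref{sub}. Suppose $(G', *)$ is a proper subgroupoid of $(G, *)$. By Theorem \ref{sub}, $(G', *)$ is a UFS-groupoid, and its universal Fibonacci sequence $\widehat{u}'$ is a singly infinite suffix of $\widehat{u}$, all of whose elements lie in $G'$. The plan is to show that in both cases of the statement this suffix already contains every element of $G$, forcing $G' = G$.

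\emph{Periodic case.} If $\widehat{u}$ is periodic, then by Lemma \ref{acs} every suffix of $\widehat{u}$ is periodic, and its cycle is a cyclic shift of the cycle of $\widehat{u}$. By Theorem \ref{norepeat}, the pairs occurring in that cycle are exactly all of $G^2$. So every element of $G$ occurs in the suffix $\widehat{u}'$, hence lies in $G'$. This gives $G' = G$, a contradiction.

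\emph{Singly infinite aperiodic case.} Here $\widehat{u} = (u_1, u_2, \ldots)$ with no repeated pairs, by Theorem \ref{norepeat}. Let $\widehat{u}' = (u_k, u_{k+1}, \ldots)$. If $k = 1$, then $\widehat{u}' = \widehat{u}$ contains every element of $G$ (Lemma \ref{twoel}), so again $G' = G$.

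If $k > 1$, we instead exploit the pair $(a, a)$ for $a = u_k \in G'$. Since $\widehat{u}'$ is a universal Fibonacci sequence of $G'$, the pair $(u_k, u_k)$ occurs in $\widehat{u}'$ at some position $\ge k$. It also occurs in $\widehat{u}$, and by the non-repetition of pairs it occurs exactly once. Cleaner still, consider the pair $(u_{k-1}, u_k)$. Rather than fight with which element is outside $G'$, the simpler route is to observe directly that $(u_1, u_1)$ occurs in $\widehat{u}$ at some position $j$. Then $\widehat{u}$ from position $j$ onward lies in $\langle u_1 \rangle$ by Lemma \ref{suffgen}, and the elements $u_1, \ldots, u_{j-1}$ are handled by choosing $g$'s self-pair.

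The general version of this argument runs as follows. Take any $g \in G$. The pair $(g, g)$ occurs somewhere in $\widehat{u}$, and so does the pair $(u_k, u_k)$ with $u_k \in G'$. By minimality, the occurrence of $(u_k, u_k)$ is at position $\ge k$; the occurrence of $(u_1, u_1)$ may be earlier. To avoid this back-and-forth, the key step is to use the minimality from the proof of Theorem \ref{sub}: $(u_k, u_{k+1})$ is the leftmost pair with both entries in $G'$. Since $G'$ is closed and $u_1 * u_2 = u_3$, we should pick $k$ as the first index whose suffix lies in $G'$. The pair $(u_{k-1}, u_k)$ has $u_{k-1} \notin G'$.

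Now use that $\widehat{u}$ is singly infinite and aperiodic. The pair $(u_k, u_{k-1})$ is, by Lemma \ref{twoel}, a substring at some position $m$. Then $u_{m+2} = u_k * u_{k-1}$. If $m \ge k$, this forces $u_{k-1} \in G'$, a contradiction. Hence $m < k$, that is, this pair sits in the prefix. I expect this step to be the main obstacle: turning such occurrences into a repeated pair or a forced periodic suffix.

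The cleanest route I would try first is a counting-of-occurrences argument, as in Lemma \ref{preperidoicfirst}. In an aperiodic singly infinite $\widehat{u}$ whose pairs are exactly $G^2$, each element other than $u_1$ appears equally often as the right entry and as the left entry of pairs, while $u_1$ has one extra left occurrence. Restricting this count to $\widehat{u}'$, whose pairs are exactly $G'^2$, gives the same balance with $u_k$ playing the role of $u_1$. Comparing the two counts on the prefix pairs, which are exactly $G^2 \setminus G'^2$, should yield a contradiction when $G'$ is a proper nonempty subset. (This is straightforward when $G$ is infinite only if the counts are handled via the pair $(u_{k-1}, u_k)$ directly.)

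If that fails, the fallback is the generation argument. Take the element $x = u_{k-1} \notin G'$ and locate $(x, x)$. If it lies before position $k$, Lemma \ref{suffgen} gives $u_k \in \langle x \rangle$; one then argues that $\langle x \rangle$ contains the whole tail, and iterates back to $(u_1, u_1)$. This would show that $G = \langle u_1 \rangle$ is generated by prefix self-pairs, and we then need that some self-pair of a $G'$-element generates $x$, which is impossible by closure.
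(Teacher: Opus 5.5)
Your periodic case is correct and coincides with the paper's argument. The singly infinite aperiodic case, however, is not proved. You try several routes (the self-pair $(u_k,u_k)$, the pair $(u_k,u_{k-1})$, an occurrence-balance count, a generation argument), call the decisive step ``the main obstacle'', and complete none of them. The balance count cannot work as stated: here $G$ is infinite, so every element occurs infinitely often both as a left and as a right entry, and the balances carry no information. The generation fallback also does not close. Locating $(x,x)$ for $x=u_{k-1}\notin G'$ only gives $G'\subset\langle x\rangle$, and iterating back only gives $G=\langle u_1\rangle$; both are consistent with $G'$ being proper. Nothing justifies the final claim that some self-pair of an element of $G'$ would have to generate $x$.

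The missing idea is a finiteness count, and you were one sentence away from it. Since $\widehat{u}$ is singly infinite, only the $k-1$ pairs starting at positions $1,\ldots,k-1$ precede the suffix $\widehat{u}'=(u_k,u_{k+1},\ldots)$. By Theorem \ref{periodic}, $G$ is infinite, because finite UFS-groupoids have periodic or singly infinite strictly preperiodic $\widehat{u}$. So for $x\in G\setminus G'$ there are infinitely many distinct pairs $(x,g)$ and $(g,x)$ with $g\in G$. Each occurs in $\widehat{u}$ by Lemma \ref{twoel}. None can start at a position $m\ge k$, since then the entry $x$ would be a term of $\widehat{u}'$ and would lie in $G'$. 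Infinitely many distinct pairs cannot fit into $k-1$ positions, and this is exactly the paper's argument. Your step placing $(u_k,u_{k-1})$ in the prefix is the instance $(g,x)$ with $g=u_k$; you only needed to run it over all $g$ at once. Equivalently, your own observation that the prefix pairs are exactly $G^2\setminus G'^2$ finishes the proof once you note that this set is infinite while the prefix is finite --- no comparison of balances is needed.
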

\begin{proof}
    If a proper subgroupoid $(G', *)$ existed, then by Theorem \ref{sub} it would be a UFS-groupoid and its universal Fibonacci sequence $\widehat{u}'$ would be a suffix of the universal Fibonacci sequence $\widehat{u}$ for $(G, *)$. Consider two cases.

    \begin{enumerate}
        \item Let $\widehat{u}$ be periodic. Hence $\widehat{u}'$ would also be periodic with the same period and cycle up to a cyclic shift, which would imply that the subgroupoid coincides with $(G, *)$ itself, a contradiction.
        \item Let $\widehat{u}$ be singly infinite aperiodic.  Take an element $a \in G$ such that $a \not\in G'$.  By Theorem \ref{periodic}, $(G, *)$ is infinite. The sequence $\widehat{u}$ must contain all pairs $(a, b)$ and $(b, a)$ as substrings by Lemma \ref{twoel}; they cannot be contained in $\widehat{u}'$, so there are only finitely many positions before the occurrence of the suffix $\widehat{u}'$, contradicting the fact that there are infinitely many such pairs.
    \end{enumerate}
\end{proof}

In the case of a doubly infinite aperiodic universal Fibonacci sequence, a weaker form of this restriction holds.

\begin{mythm}\label{onesubfs}
A UFS-groupoid $(G, *)$ with a doubly infinite aperiodic universal Fibonacci sequence can have at most one proper subgroupoid, and any such proper subgroupoid will have a singly infinite aperiodic universal Fibonacci sequence.
\end{mythm}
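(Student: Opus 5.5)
By Theorem \ref{sub}, every proper subgroupoid $(G',*)$ of $(G,*)$ is a UFS-groupoid whose universal Fibonacci sequence $\widehat{u}'$ is singly infinite and is a suffix of $\widehat{u}$. The plan is to first settle the periodicity type of $\widehat{u}'$, then show that $G'$ is determined by its starting position, and finally show that two proper subgroupoids would have to coincide.

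\emph{Type of $\widehat{u}'$.} Since $\widehat{u}$ is aperiodic, it has no periodic suffix. Hence $\widehat{u}'$ is neither periodic nor strictly preperiodic, so it is singly infinite aperiodic. This proves the second claim.

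\emph{Description of $G'$ by its starting point.} Let $\widehat{u}'$ start at position $k$ in $\widehat{u}$. By the proof of Theorem \ref{sub}, position $k$ is the leftmost position where a pair with both entries in $G'$ begins. Moreover, $G'$ is exactly the set of elements occurring in $\widehat{u}$ from position $k$ onward. Indeed, every such element lies in $\langle u_k,u_{k+1}\rangle\subset G'$ by Lemma \ref{suffgen}. Conversely, every $g\in G'$ occurs in $\widehat{u}'$, because $(g,g)$ is a substring of the universal sequence $\widehat{u}'$ of $(G',*)$. So $G'=\{u_i : i\ge k\}$.

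\emph{Uniqueness.} Suppose $G'_1\neq G'_2$ are two proper subgroupoids with starting positions $k_1<k_2$; by the previous step the positions differ. Then $G'_2\subsetneq G'_1$, and $G'_2$ is a proper subgroupoid of $(G'_1,*)$. But $(G'_1,*)$ has a singly infinite aperiodic universal Fibonacci sequence, so by Theorem \ref{nosubufs} it has no proper subgroupoids, a contradiction. Hence there is at most one proper subgroupoid.

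The main obstacle is the step showing that a proper subgroupoid is determined by its suffix, so that distinct subgroupoids are nested. Once nestedness is in hand, Theorem \ref{nosubufs} does all the remaining work.

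One could add that the subgroupoid is infinite, as in the introduction. This follows because a finite UFS-groupoid has a periodic or strictly preperiodic universal sequence (Theorem \ref{periodic}), which is impossible here.
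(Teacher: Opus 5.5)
Your proof is correct and follows essentially the same route as the paper. Theorem~\ref{sub} makes each proper subgroupoid's universal sequence a suffix of $\widehat{u}$, hence aperiodic; the subgroupoid whose suffix starts further left contains the other; and Theorem~\ref{nosubufs} gives the contradiction. Your explicit identification $G'=\{u_i : i\ge k\}$ also justifies two points the paper leaves implicit: why two distinct subgroupoids cannot start at the same position, and why the resulting inclusion is proper.
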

\begin{proof}
    By Theorem \ref{sub}, any proper subgroupoid has a singly infinite universal Fibonacci sequence, which is also aperiodic.

    Assume the contrary to the statement of the theorem and suppose there exist at least two proper subgroupoids, $(G', *)$ and $(G'', *)$. By Theorem \ref{sub}, their universal Fibonacci sequences $\widehat{u}'$ and $\widehat{u}''$ are suffixes of $\widehat{u}$. Consider the occurrences of these suffixes in $\widehat{u}$ and let $\widehat{u}'$ appear further to the left. It follows that $\widehat{u}'$ contains all pairs from $(G'')^2$ as substrings, since they occur in $\widehat{u}''$, and therefore $(G'', *)$ is a subgroupoid of $(G', *)$. But by Theorem \ref{nosubufs}, this contradicts the fact that a UFS-groupoid with a singly infinite aperiodic universal Fibonacci sequence cannot have proper subgroupoids.
\end{proof}

The structure of subgroupoids is related to the infinite generation of UFS-groupoids.

\begin{mythm}\label{cycl}
    A UFS-groupoid $(G, *)$ is infinitely generated if and only if there exists an infinite sequence of distinct nested subgroupoids 
        $$ (G_1, *) \subset (G_2, *) \subset \ldots \subset (G_n , *) \subset \ldots \subset (G, *).$$
\end{mythm}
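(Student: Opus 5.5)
We prove the two implications separately. Throughout, we rely on the local cyclicity of UFS-groupoids (Theorem \ref{cyclic}) and on the fact that a finitely generated UFS-groupoid is cyclic (Corollary \ref{cyclicall}). The key observation is that for a groupoid, being finitely generated is equivalent to satisfying a form of the ascending chain condition on cyclic subgroupoids.

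\emph{($\Rightarrow$)} Suppose $(G,*)$ is infinitely generated. By Lemma \ref{card}, $G$ is countable, and it is infinite, since a finite groupoid is finitely generated. Enumerate $G=\{g_1,g_2,\ldots\}$ and set $H_n=\langle g_1,\ldots,g_n\rangle$. By Theorem \ref{cyclic}, each $H_n$ is cyclic. The $H_n$ are nested and their union is $G$.

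If the chain $H_1\subseteq H_2\subseteq\ldots$ stabilized at some $H_N$, then $G=H_N$ would be finitely generated, which is a contradiction. Hence the chain takes infinitely many distinct values. Discarding repetitions, we obtain an infinite strictly increasing chain $G_1\subsetneq G_2\subsetneq\ldots$ of subgroupoids of $G$. Each $G_i$ is proper: if some $G_i$ equaled $G$, then the chain could not increase strictly beyond it, and $G$ would also be finitely generated.

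\emph{($\Leftarrow$)} Suppose there is an infinite chain $G_1\subsetneq G_2\subsetneq\ldots\subseteq G$ of distinct subgroupoids, and assume for contradiction that $G$ is finitely generated. By Corollary \ref{cyclicall}, $G=\langle g\rangle$ for some $g$. We now use the universal Fibonacci sequence $\widehat{u}$ and Theorem \ref{sub}: each proper subgroupoid $G_i$ has a singly infinite universal Fibonacci sequence $\widehat{u}_i$ which is a suffix of $\widehat{u}$. We also use the construction in the proof of Theorem \ref{sub}, where $\widehat{u}_i$ starts at the leftmost pair lying in $G_i^2$.

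Since $G_i\subsetneq G_{i+1}$, the suffix $\widehat{u}_{i+1}$ must start strictly to the left of $\widehat{u}_i$. Indeed, if it started at the same position or further right, then $\widehat{u}_{i+1}$ would be a suffix of $\widehat{u}_i$, so its elements, which include all of $G_{i+1}$ by Lemma \ref{twoel}, would lie in $G_i$.

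Thus the starting positions $p_i$ of the $\widehat{u}_i$ strictly decrease. Now choose a position $q$ where the substring $(g,g)$ occurs in $\widehat{u}$. By Lemma \ref{suffgen}, every element after position $q$ lies in $\langle g\rangle$, but the same lemma gives more. If $p_i\le q$ for some $i$, then the pair $(g,g)$ lies inside $\widehat{u}_i$, so $g\in G_i$ and hence $G=\langle g\rangle\subseteq G_i$. This contradicts the properness of $G_i$. Therefore $p_i>q$ for all $i$.

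But the $p_i$ form an infinite strictly decreasing sequence of integers, all greater than $q$. This is impossible, and so $G$ is infinitely generated.

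The main obstacle is the second direction, specifically justifying that strict inclusion of subgroupoids forces strictly leftward starting positions of their universal Fibonacci sequences. This requires tracking exactly where $\widehat{u}_i$ sits inside $\widehat{u}$, and in particular using both facts about the suffix from Theorem \ref{sub}: that it contains all pairs from $G_i^2$, and that its elements lie in $G_i$.
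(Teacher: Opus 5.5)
Your proof is correct. For ($\Leftarrow$) you follow the paper's idea: the suffixes $\widehat{u}_i$ supplied by Theorem~\ref{sub} drift to the left until one of them contains an occurrence of $(g,g)$, which forces $G=\langle g\rangle\subseteq G_i$. The paper only asserts that the suffixes are ``situated further and further to the left''; you prove it. Strict inclusion $G_i\subsetneq G_{i+1}$ forces the starting positions $p_i$ to decrease strictly. Properness of every $G_i$ bounds them below by the position $q$ of $(g,g)$. That properness follows from $G_i\subsetneq G_{i+1}\subseteq G$, a point you use tacitly.

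For ($\Rightarrow$) your route is genuinely different. The paper takes $G_1=\langle a\rangle$, then $G_2=\langle b\rangle$ for some $b\notin G_1$, and so on. It needs Theorem~\ref{sub} to see that these are nested: $\widehat{u}_2$ cannot be a suffix of $\widehat{u}_1$ (else $b\in G_1$), so $\widehat{u}_1$ is a suffix of $\widehat{u}_2$ and $G_1\subsetneq G_2$. This relies on the UFS-specific comparability of proper subgroupoids via nested suffixes. Your chain $H_n=\langle g_1,\ldots,g_n\rangle$ is nested by construction, so this direction is purely algebraic. It gives an infinite strictly ascending chain in any infinitely generated groupoid; countability is only a convenience. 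As a bonus, your chain exhausts $G$ and, by Theorem~\ref{cyclic}, consists of cyclic subgroupoids, whereas the paper's greedily chosen chain need not have union $G$.

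One caution: your opening slogan, that for a groupoid finite generation is equivalent to an ascending chain condition, is false in general. In the finitely generated group $\langle a,t\mid tat^{-1}=a^2\rangle$, the elements $b_k=t^{-k}at^{k}$ satisfy $b_{k+1}^2=b_k$ and give an infinite strictly ascending chain of cyclic subgroupoids. So the reverse implication really does need the UFS structure, and your argument correctly uses that structure rather than the slogan.
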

\begin{proof}
    Let the UFS-groupoid $(G, *)$ be infinitely generated. Then for an arbitrary $a \in G$ it holds that $(G, *)$ is not equal to $(G_1, *) = \langle a \rangle$. By Theorem \ref{sub}, the universal Fibonacci sequence $\widehat{u}_1$ for $(G_1, *)$ is a suffix of $\widehat{u}$. Take some $b \in G$ such that $b \not\in (G_1, *)$. Then similarly $(G, *)$ is not equal to $(G_2, *) = \langle b \rangle$, and by Theorem \ref{sub} the universal Fibonacci sequence $\widehat{u}_2$ for $(G_2, *)$ is a suffix of $\widehat{u}$, hence $\widehat{u}_1$ is a suffix of $\widehat{u}_2$. Indeed, if the opposite were true, then pairs containing $b$ would appear as substrings in $\widehat{u}_1$, which would imply $b \in G_1$, a contradiction. Thus, $\widehat{u}_2$ contains all pairs from $G_1^2$ from $\widehat{u}_1$, and therefore $(G_1, *) \subset (G_2, *)$. Continuing this process, we obtain an infinite chain of nested subgroupoids.

    Conversely, suppose that for the UFS-groupoid $(G, *)$ there exists an infinite chain of nested subgroupoids and that this groupoid is not infinitely generated. Then by Theorem \ref{cyclic}, $(G, *) = \langle a \rangle$ for some $a \in G$. The pair $(a, a)$ occurs as a substring in $\widehat{u}$. Since the sequence of nested subgroupoids is infinite and their universal Fibonacci sequences are situated further and further to the left as suffixes in $\widehat{u}$, sooner or later $(a, a)$ will also occur as a substring in the universal Fibonacci sequence for some $(G_n, *)$. But then $a \in G_n$, which implies $(G_n, *) = \langle a \rangle = (G, *)$, and the chain cannot continue, a contradiction.
\end{proof}

\begin{mycor}\label{cyclnodoublestr}
    A UFS-groupoid $(G, *)$ is cyclic if its universal Fibonacci sequence is not doubly infinite strictly preperiodic.
\end{mycor}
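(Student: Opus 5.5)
The plan is to split by the periodicity type of $\widehat{u}$. In each case I will either show that $(G,*)$ is finitely generated and then apply Corollary \ref{cyclicall}, or rule out an infinite strictly increasing chain of subgroupoids and then apply Theorem \ref{cycl} together with Theorem \ref{cyclic}.

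\textbf{Finite case.} If $\widehat{u}$ is periodic or singly infinite strictly preperiodic, then $G$ is finite by Theorem \ref{periodic}. Cyclicity is then immediate from Corollary \ref{cyclicfin}.

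\textbf{Singly infinite case.} If $\widehat{u}=(u_1,u_2,\ldots)$ is singly infinite, with the aperiodic subcase being the remaining one, Lemma \ref{onesideUFSisfc} gives $\widehat{u}=Fib(u_1,u_2)$. By Lemma \ref{suffgen}, every element of $G$ lies in $\langle u_1,u_2\rangle$, since every element occurs in $\widehat{u}$ by Lemma \ref{twoel}. So $G$ is $2$-generated, and Corollary \ref{cyclicall} gives that $G$ is cyclic. Alternatively, the substring $(u_1,u_1)$ occurs somewhere, and the argument of Theorem \ref{cyclic} applies to the finite set $\{u_1,u_2\}$.

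\textbf{Doubly infinite aperiodic case.} This is the only case with real content. By Theorem \ref{onesubfs}, $(G,*)$ has at most one proper subgroupoid. An infinite chain $G_1\subsetneq G_2\subsetneq\cdots\subset G$ of distinct subgroupoids would contain infinitely many distinct proper subgroupoids. Only $G$ itself could appear as a non-proper member, which leaves at least two proper ones, a contradiction. Hence Theorem \ref{cycl} shows that $G$ is not infinitely generated, so it is finitely generated, and Corollary \ref{cyclicall} gives cyclicity.

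\textbf{Expected obstacle.} The main thing to check is that every case in the hypothesis is covered. The remaining types are periodic (finite, or doubly infinite periodic, which is still finite by Theorem \ref{periodic}), singly infinite strictly preperiodic, singly infinite aperiodic, and doubly infinite aperiodic. The excluded type, doubly infinite strictly preperiodic, is exactly where Theorem \ref{strictsubs} permits an infinite nested chain. I also need the ``distinct'' wording of Theorem \ref{cycl} to match the chain I rule out, which is why I require the inclusions to be strict.
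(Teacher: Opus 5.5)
Your proof is correct. In the doubly infinite aperiodic case it is exactly the paper's argument. Theorem \ref{onesubfs} leaves at most one proper subgroupoid, so no infinite chain of distinct subgroupoids exists, and Theorem \ref{cycl} together with local cyclicity gives cyclicity.

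The paper, however, runs every admissible case through this same "finitely many subgroupoids" criterion. For finite $G$ it cites Theorem \ref{periodic}, and for singly infinite aperiodic $\widehat{u}$ it cites Theorem \ref{nosubufs} (no proper subgroupoids). You instead handle the singly infinite case directly: since $\widehat{u}=Fib(u_1,u_2)$ contains every element of $G$, one has $G=\langle u_1,u_2\rangle$. This route has three advantages:
\begin{itemize}
\item It is more elementary, since it bypasses Theorems \ref{sub} and \ref{nosubufs} entirely.
\item It covers every singly infinite type at once. By Theorem \ref{unique} it also covers the periodic case, because a periodic universal Fibonacci sequence may be taken singly infinite, so your separate finite case needs no separate treatment.
\item It shows that the subgroupoid machinery is genuinely needed only for the doubly infinite aperiodic type.
\end{itemize}
What the paper's route buys is a single uniform criterion applied to all admissible periodicity types, at the cost of leaning on the heavier structural results about subgroupoids.
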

\begin{proof}
    By Theorems \ref{periodic}, \ref{nosubufs}, and \ref{onesubfs}, all periodicity types of $\widehat{u}$ except the doubly infinite strictly preperiodic case contain only finitely many subgroupoids, which by Theorem \ref{cycl} implies cyclicity.
\end{proof}

\begin{mycor}\label{subcycle}
    Any proper subgroupoid of a UFS-groupoid is a cyclic UFS-groupoid.

\end{mycor}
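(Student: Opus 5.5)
The plan is to combine Theorem \ref{sub} with the cyclicity results already established for the possible periodicity types of a universal Fibonacci sequence.

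First, let $(G', *)$ be a proper subgroupoid of a UFS-groupoid $(G, *)$ with universal Fibonacci sequence $\widehat{u}$. By Theorem \ref{sub}, $(G', *)$ is itself a UFS-groupoid. Its universal Fibonacci sequence $\widehat{u}'$ is singly infinite and is a suffix of $\widehat{u}$. This settles the UFS part of the statement, and it remains only to prove cyclicity.

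For cyclicity I will invoke Corollary \ref{cyclnodoublestr}. It says that a UFS-groupoid is cyclic whenever its universal Fibonacci sequence is not doubly infinite strictly preperiodic. Since $\widehat{u}'$ is singly infinite, it is in particular not doubly infinite strictly preperiodic, so $(G', *)$ is cyclic.

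The only point requiring care is to check that the chain of results behind Corollary \ref{cyclnodoublestr} really covers every singly infinite type.
\begin{itemize}
\item In the singly infinite strictly preperiodic case, $G'$ is finite by Theorem \ref{periodic}.
\item In the singly infinite aperiodic case, $G'$ has no proper subgroupoids by Theorem \ref{nosubufs}.
\item In the singly infinite periodic case, $G'$ is again finite.
\end{itemize}
In each case $G'$ admits no infinite strictly increasing chain of subgroupoids. Theorem \ref{cycl} then shows that $G'$ is finitely generated, and Theorem \ref{cyclic} (local cyclicity) shows it is cyclic.

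As a backup, a more direct argument avoids these chain results. Take the first pair $(a,b)$ of $\widehat{u}'$. By Lemma \ref{suffgen}, every element of $G'$ lies in $\langle a,b\rangle$, because $\widehat{u}'=Fib(a,b)$ contains all of $G'$ by Lemma \ref{twoel}. Hence $G'$ is $2$-generated, and Theorem \ref{cyclic} gives $G'=\langle c\rangle$ for some $c$. I expect no real obstacle; the substantive work has already been done in Theorem \ref{sub}.
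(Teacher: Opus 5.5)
Your main argument is exactly the paper's proof: Theorem \ref{sub} shows that the proper subgroupoid is a UFS-groupoid with a singly infinite universal Fibonacci sequence, and Corollary \ref{cyclnodoublestr} then gives cyclicity. Your backup argument is also correct and more direct: $G'=\langle a,b\rangle$ for the first pair $(a,b)$ of $\widehat{u}'$, so local cyclicity (Theorem \ref{cyclic}) finishes the proof without going through Theorem \ref{cycl}.
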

\begin{proof}
    This follows from Theorem \ref{sub} and Corollary \ref{cyclnodoublestr}.
\end{proof}

For an arbitrary UFS-groupoid $(G, *)$, more than one proper subgroupoid can exist only if the universal Fibonacci sequence is strictly preperiodic, which follows from Theorems \ref{nosubufs} and \ref{onesubfs}. One can describe the structure of subgroupoids in this case.

\begin{mythm}\label{strictsubs}
    Let a UFS-groupoid $(G, *)$ have a strictly preperiodic universal Fibonacci sequence $\widehat{u}$. Then every subgroupoid of $(G, *)$ is finite.

    If $\widehat{u}$ is singly infinite, then the subgroupoids form a finite nested chain 
    $$(G_1, *) \subset (G_2, *) \subset \ldots \subset (G_n , *)  \subset (G, *).$$ 
    If $\widehat{u}$ is doubly infinite, then the subgroupoids form either a finite 
    $$(G_1, *) \subset (G_2, *) \subset \ldots \subset (G_n , *) \subset  (G, *)$$ 
    or an infinite 
    $$(G_1, *) \subset (G_2, *) \subset \ldots \subset (G_n , *) \subset \ldots \subset (G, *)$$ 
    nested chain.
\end{mythm}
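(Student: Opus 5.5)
Since $\widehat{u}$ is strictly preperiodic, write it as a preperiodic part followed by a periodic part with cycle $C=(c_1,\ldots,c_t)$. The plan has three steps: show that every subgroupoid contains the finite set $P$ of elements in $C$; show that every proper subgroupoid is finite; and then show that the subgroupoids are linearly ordered by inclusion.

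First, the lower bound. Let $(G',*)$ be a proper subgroupoid. By Theorem \ref{sub} it is a UFS-groupoid whose universal Fibonacci sequence $\widehat{u}'$ is a singly infinite suffix of $\widehat{u}$. Every suffix of $\widehat{u}$ contains the whole periodic tail, so $P \subseteq G'$.

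Second, finiteness. The suffix $\widehat{u}'$ begins at some position $k$. Only finitely many entries lie between position $k$ and the start of the periodic part, so $\widehat{u}'$ takes only finitely many values. Its set of values is exactly $G'$, because every pair from $(G')^2$ occurs in $\widehat{u}'$ (Lemma \ref{twoel} applied to $G'$). Hence $G'$ is finite. The subgroupoid $G$ itself is finite when $\widehat{u}$ is singly infinite, by Theorem \ref{periodic}. I would read the claim ``every subgroupoid is finite'' as referring to proper subgroupoids, since $G$ can be infinite in the doubly infinite case.

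Third, the chain structure. Take two proper subgroupoids $G'$ and $G''$, with universal Fibonacci sequences that are suffixes of $\widehat{u}$ starting at positions $k' \le k''$. Then $\widehat{u}''$ is a suffix of $\widehat{u}'$, so every element of $G''$ occurs in $\widehat{u}'$, which gives $G'' \subseteq G'$. Conversely, each subgroupoid is determined by its starting position: $G'$ is the set of values of the suffix beginning at $k'$. So the subgroupoids are linearly ordered by inclusion, and ordering them by decreasing starting position gives the nested chain.

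It remains to decide when the chain is finite. If $\widehat{u}$ is singly infinite, there are only finitely many positions, hence finitely many subgroupoids, and the chain is finite. If $\widehat{u}$ is doubly infinite, the starting positions form a subset of $\mathbb{Z}$ that is bounded above by the start of the periodic part. Such a set is either finite or unbounded below, which gives the finite or infinite chain indexed as in the statement. The chain is ordered by starting position from right to left. Since each proper $G_i$ is finite and the $G_i$ strictly increase, they are all distinct, and $G$ sits at the top.

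The step I expect to be the main obstacle is the correspondence between subgroupoids and positions. Distinct starting positions could in principle give the same set, so I need care that the chain consists of distinct subgroupoids. I would handle this by indexing subgroupoids by their sets rather than by positions, and using only the inclusion just shown: any two subgroupoids are comparable, so the family is a chain. It is well-ordered in the required direction because each proper member is finite and contains $P$. Since every proper subgroupoid is finite and the inclusions between distinct members are strict, any member has only finitely many members below it. Hence the chain is order-isomorphic to an initial segment of $\mathbb{N}$, with $G$ on top.
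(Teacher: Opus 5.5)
Your proposal is correct and is essentially the paper's argument: by Theorem \ref{sub} each proper subgroupoid is the value set of a suffix of $\widehat{u}$, so any two are comparable by suffix position, and finiteness of the proper subgroupoids makes the chain have order type at most $\omega$ below $G$. You are right that the finiteness claim can only mean proper subgroupoids in the doubly infinite case; your direct count of values in the suffix replaces the paper's appeal to Theorem \ref{periodic}, and your ``finitely many members below each member'' step is a sharper form of the paper's remark that the chain has a beginning. The slips are minor: a singly infinite $\widehat{u}$ has infinitely many positions, and starting positions are bounded above only for a canonical choice such as the leftmost pair from $(G')^2$, but your final set-indexed argument, together with $|G|<\infty$ from Theorem \ref{periodic} in the singly infinite case, already covers both.
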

\begin{proof}
    Any suffix of a strictly preperiodic sequence is singly infinite periodic or strictly preperiodic, hence by Theorems \ref{sub} and \ref{periodic} every subgroupoid is finite.
    
    Similarly to the proof of Theorem \ref{onesubfs}, among any two subgroupoids one is a subgroupoid of the other. Moreover, the number of subgroupoids is at most countable, because to each of them there corresponds at least one suffix of $\widehat{u}$ which is the universal Fibonacci sequence of that subgroupoid. Hence, all of them can be enumerated and they form a nested chain. Finally, a finite groupoid has only finitely many subgroupoids. If the groupoid is infinite, then as we proved, every its subgroupoid is finite, and therefore the chain cannot extend infinitely to the left, and any chain of inclusions has a beginning.
\end{proof}

\begin{mycor}
    A UFS-groupoid is infinitely generated if and only if it has infinitely many subgroupoids.
\end{mycor}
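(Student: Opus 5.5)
The statement is the corollary just above: a UFS-groupoid is infinitely generated if and only if it has infinitely many subgroupoids. I plan to derive it almost entirely from Theorem \ref{cycl}, supplemented by the subgroupoid-structure results.

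For the forward direction, suppose $(G,*)$ is infinitely generated. Theorem \ref{cycl} then gives an infinite chain of distinct nested subgroupoids $(G_1,*)\subset(G_2,*)\subset\ldots\subset(G,*)$. Each term is a subgroupoid, and they are pairwise distinct, so $(G,*)$ has infinitely many subgroupoids.

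For the converse, suppose $(G,*)$ has infinitely many subgroupoids; the task is to produce an infinite nested chain of distinct subgroupoids and then apply Theorem \ref{cycl} in the reverse direction. I will split into cases by the periodicity type of $\widehat{u}$.
\begin{itemize}
\item If $\widehat{u}$ is periodic, singly infinite aperiodic, or doubly infinite aperiodic, then Theorems \ref{nosubufs} and \ref{onesubfs} allow at most two subgroupoids in total, counting $G$ itself. This contradicts the assumption, so these cases cannot occur.
\item If $\widehat{u}$ is strictly preperiodic, Theorem \ref{strictsubs} says the subgroupoids form a nested chain $(G_1,*)\subset(G_2,*)\subset\ldots$, indexed without a left end.
\end{itemize}
In the strictly preperiodic case, infinitely many subgroupoids means this chain is infinite. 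After removing $G$ itself if it appears, it is an infinite chain of distinct proper subgroupoids. Theorem \ref{cycl} then yields that $(G,*)$ is infinitely generated.

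The main obstacle is making sure that ``infinitely many subgroupoids'' really produces an infinite \emph{nested} chain in the form Theorem \ref{cycl} requires. In the strictly preperiodic case this relies on the pairwise comparability established in the proof of Theorem \ref{strictsubs}. If that comparability feels insufficiently explicit, I would argue directly instead. Each proper subgroupoid corresponds to a suffix of $\widehat{u}$ by Theorem \ref{sub}. Given two such suffixes, the one starting further to the left contains all pairs of the other, so its subgroupoid contains the other. Ordering the subgroupoids by the starting positions of their suffixes then gives the required infinite chain.
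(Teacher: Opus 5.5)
Your proof is correct and follows the paper's route. The paper derives the corollary directly from Theorem \ref{cycl} and Theorem \ref{strictsubs}; your explicit use of Theorems \ref{nosubufs} and \ref{onesubfs} to rule out the non-preperiodic cases just spells out what the paper leaves implicit in the remark preceding Theorem \ref{strictsubs}. One wording slip: the chain from Theorem \ref{strictsubs} does have a first element, since all proper subgroupoids are finite and there is no infinite descent. It is the corresponding suffixes of $\widehat{u}$, not the chain itself, that run off with no left end.
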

\begin{proof}
    This follows from Theorems \ref{cycl} and \ref{strictsubs}.
\end{proof}

The property of being a UFS-groupoid is also preserved under homomorphisms.

\begin{mylemma}\label{fibhom}
    Let a homomorphism $f: G \to G'$ from a groupoid $(G, *)$ to $(G', *')$ be given, and let $a, b \in G$. Then $f(Fib(a, b)) = Fib(f(a), f(b))$.
\end{mylemma}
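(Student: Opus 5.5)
The argument is a termwise induction along the sequence. Write $Fib(a, b) = (f_1, f_2, f_3, \ldots)$ in $(G, *)$ and $Fib(f(a), f(b)) = (g_1, g_2, g_3, \ldots)$ in $(G', *')$. By the convention fixed in the section on sequences, $f(Fib(a, b))$ is the sequence $(f(f_1), f(f_2), \ldots)$ obtained by applying $f$ elementwise. So it suffices to show that $f(f_n) = g_n$ for every natural $n$, and we do this by strong induction on $n$.

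For the base cases $n = 1, 2$, we have $f(f_1) = f(a) = g_1$ and $f(f_2) = f(b) = g_2$ directly from the definitions of the two Fibonacci sequences.

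For the inductive step, assume $n \ge 3$ and that $f(f_{n-2}) = g_{n-2}$ and $f(f_{n-1}) = g_{n-1}$. Using the recurrence in $(G, *)$, the homomorphism property, the inductive hypothesis, and then the recurrence in $(G', *')$, we get
$$f(f_n) = f(f_{n-2} * f_{n-1}) = f(f_{n-2}) *' f(f_{n-1}) = g_{n-2} *' g_{n-1} = g_n.$$
Hence the two sequences agree term by term, which proves $f(Fib(a, b)) = Fib(f(a), f(b))$.

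There is no real obstacle; the only care needed is to use two base cases, since the recurrence depends on the two preceding terms.
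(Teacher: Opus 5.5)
Your proof is correct and follows essentially the same route as the paper: write $Fib(a, b) = (f_1, f_2, \ldots)$, apply $f$ termwise, and use the homomorphism identity $f(f_{n-2} * f_{n-1}) = f(f_{n-2}) *' f(f_{n-1})$ inductively. The only cosmetic difference is that you compare the image directly, term by term, with $Fib(f(a), f(b)) = (g_1, g_2, \ldots)$, whereas the paper shows that the image sequence satisfies the recurrence in $(G', *')$ and starts with $f(a), f(b)$, which amounts to the same thing.
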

\begin{proof}
     Let $$Fib(a,b) = (f_1, f_2, f_3, \ldots),$$ where $f_1 = a$, $f_2 = b$, and $f_n = f_{n-2} * f_{n-1}$ for $n > 2$. Then $$f(Fib(a, b)) = (f(f_1), f(f_2), f(f_3), \ldots).$$ Since $f$ is a homomorphism, $f(f_3) = f(f_1 * f_2) = f(f_1) *' f(f_2)$, and then by induction, if $f(f_{k}) = f(f_{k-2}) *' f(f_{k-1})$ for all $k \le n$, then $f(f_{n+1}) = f(f_{n-1} * f_n) = f(f_{n-1}) *' f(f_n)$.
\end{proof}
\begin{mythm}\label{homomorph}
The homomorphic image $f((G, *))$ of a UFS-groupoid $(G, *)$ with universal Fibonacci sequence $\widehat{u}$ under a homomorphism $f$ into an arbitrary groupoid $(G', *')$ is also a UFS-groupoid, and the universal Fibonacci sequence for $f((G, *))$ is precisely $f(\widehat{u})$.
\end{mythm}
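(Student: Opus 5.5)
I will verify directly the two conditions in the definition of a universal Fibonacci sequence for $f(\widehat{u})$ inside the groupoid $(H, *') = f((G, *))$, where $H = f(G)$. The only tool needed is Lemma \ref{fibhom}, $f(Fib(a, b)) = Fib(f(a), f(b))$, together with the fact that $H$ is closed under $*'$. Every Fibonacci sequence of $(H, *')$ is the same as a Fibonacci sequence of $(G', *')$ with both initial terms in $H$.

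\emph{Step 1: suffixes of $f(\widehat{u})$ are Fibonacci sequences of $f((G, *))$.} Take a suffix of $f(\widehat{u})$ starting at position $k$. It equals $f$ applied to the suffix $(u_k, u_{k+1}, \ldots)$ of $\widehat{u}$. By item 1 of the definition of a UFS, that suffix is $Fib(u_k, u_{k+1})$ in $(G, *)$. Lemma \ref{fibhom} then shows that its image is $Fib(f(u_k), f(u_{k+1}))$ computed with $*'$. All its terms lie in $H$, so it is a Fibonacci sequence of $f((G, *))$.

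\emph{Step 2: every Fibonacci sequence of $f((G, *))$ is a suffix of $f(\widehat{u})$.} Let $x, y \in H$ and choose preimages $a, b \in G$ with $f(a) = x$ and $f(b) = y$. By item 2 of the definition, $Fib(a, b)$ is a suffix of $\widehat{u}$, say $\widehat{u} = (\ldots, u_{k-1}, Fib(a, b))$. Applying $f$ elementwise gives $f(\widehat{u}) = (\ldots, f(u_{k-1}), f(Fib(a, b)))$. By Lemma \ref{fibhom} the tail is $Fib(x, y)$. Hence $f(\widehat{u})$ is a UFS for $f((G, *))$, and $f((G, *))$ is a UFS-groupoid.

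\emph{Main obstacle.} The substantive content is already in Lemma \ref{fibhom}. The only delicate point is the phrase "precisely $f(\widehat{u})$". By Theorem \ref{unique}, the universal Fibonacci sequence is unique in the non-periodic case. In the periodic case it is unique up to cyclic shift and the choice between singly and doubly infinite. So $f(\widehat{u})$ is the UFS in exactly the sense of that theorem, and I will remark this explicitly.

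A second subtlety is that $f(\widehat{u})$ may have a smaller period, or may become periodic, even if $\widehat{u}$ was not. The argument above never uses the periodicity type, so this causes no problem.
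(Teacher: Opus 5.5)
Your proof is correct and is essentially the paper's argument: both rest on Lemma \ref{fibhom} together with the fact that every $Fib(a,b)$ is a suffix of $\widehat{u}$. The only difference is bookkeeping. You verify the two defining conditions of a universal Fibonacci sequence directly, which also identifies the operation on $f(G)$ as $*'$ automatically. The paper instead checks the criterion of Theorem \ref{uniqueinv}: every pair from $f(G)$ occurs in $f(\widehat{u})$, and suffixes of $f(\widehat{u})$ beginning with the same pair coincide.
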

\begin{proof}
    Let $$\widehat{u} = (\ldots, u_n, u_{n + 1}, \ldots).$$ Consider $$f(\widehat{u}) = (\ldots, f(u_n), f(u_{n + 1}), \ldots)$$ and an arbitrary pair of elements $f(a), f(b) \in f(G)$. Then, by Lemma \ref{twoel}, $(a, b)$ occurs as a substring in $\widehat{u}$: $$\widehat{u} = (\ldots, a, b, a * b, \ldots)$$ and $$f(\widehat{u}) = (\ldots, f(a), f(b), f(a * b), \ldots).$$
    
    Thus, every pair of elements from $f(G)$ occurs as a substring in $\widehat{u}'$. Suppose some pair $(f', f'')$ occurs at two different positions in $f(\widehat{u})$, coming from pairs $(a, b)$ and $(a', b')$ in $\widehat{u}$. Then at these two positions suffixes of the form $f(Fib(a, b))$ and $f(Fib(a', b'))$ begin, which by Lemma \ref{fibhom} are equal: $f(Fib(a, b)) = Fib(f', f'') = f(Fib(a', b'))$. Therefore, by Theorem \ref{uniqueinv}, the sequence $f(\widehat{u})$ is a universal Fibonacci sequence for $f((G, *))$.
\end{proof}

For the following fact, we will use the UFS-groupoid $UFSP_2$ from Example \ref{twoelUFS} in the next chapter. It is defined by the operation $a * b = 1 - a$ on $\{0, 1\}$.

\begin{mylemma}\label{noprod}
    The direct product $UFSP_2 \times UFSP_2$ is not a UFS-groupoid.

\end{mylemma}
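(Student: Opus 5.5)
The product $UFSP_2 \times UFSP_2$ has operation $(a_1,a_2) *' (b_1,b_2) = (1-a_1, 1-a_2)$ on the four-element set $\{0,1\}^2$. The plan is to compute its Fibonacci sequences directly and show that they cannot all be suffixes of one sequence. One could also argue via Theorem \ref{cyclic} or Corollary \ref{cyclicfin}, but the direct computation is just as quick.

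First, I would compute $Fib(x, y)$ for arbitrary $x, y \in \{0,1\}^2$. Every product depends only on its left factor: $x *' y = \bar{x}$, where $\bar{x} = (1-a_1, 1-a_2)$ for $x = (a_1, a_2)$. Hence the sequence is $f_1 = x$, $f_2 = y$, $f_3 = \bar{x}$, $f_4 = \bar{y}$, $f_5 = x$, $f_6 = y$, and so on. Thus every Fibonacci sequence is periodic with period dividing $4$.

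The cleanest conclusion then uses Theorem \ref{periodic} together with Lemma \ref{allper}. If a universal Fibonacci sequence existed, all the $Fib(x, y)$ would be periodic, so by Lemma \ref{allper} the universal Fibonacci sequence would be periodic. Since $|G| = 4$, Theorem \ref{periodic} says its period would have to be $16$. But by Lemma \ref{allper} that period equals the period of each $Fib(x, y)$, which is at most $4$. This is a contradiction.

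As an alternative check, one could use Theorem \ref{ufssuff}. For example, $Fib((0,0),(0,0)) = ((0,0),(0,0),(1,1),(1,1),\ldots)$ never contains the element $(0,1)$. So this sequence and $Fib((0,1),(0,1))$ cannot be suffixes of one another, since their sets of elements are disjoint. This violates condition 2 of Theorem \ref{ufssuff}.

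I expect no real obstacle; the only care needed is applying Lemma \ref{allper} correctly, namely that the UFS period equals the period of each Fibonacci sequence. The disjoint-sets argument via Theorem \ref{ufssuff} avoids even that subtlety, so I would likely present it as the main proof.
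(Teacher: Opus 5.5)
Your proof is correct, and the argument you would present is essentially the paper's: the paper uses the same two sequences $Fib((0,0),(0,0))$ and $Fib((0,1),(0,1))$, observes that both would have to be suffixes of a universal Fibonacci sequence, and gets the contradiction from Lemma \ref{persuffix} (their cycles would have to agree up to a cyclic shift), whereas you get it from their disjoint element sets, which is the same idea. Your alternative counting argument via Lemma \ref{allper} and Theorem \ref{periodic} is also valid: a periodic universal Fibonacci sequence on four elements would need period $16$, but every Fibonacci sequence here has period at most $4$.
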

\begin{proof}
   Consider two Fibonacci sequences in $UFSP_2 \times UFSP_2$.

The first, $Fib((0, 0), (0, 0))$, is periodic with cycle $((0, 0), (0, 0), (1, 1), (1, 1))$.

    The second, $Fib((0, 1), (0, 1))$, is periodic with cycle $((0, 1), (0, 1), (1, 0), (1, 0))$.

If a universal Fibonacci sequence existed, then both of these sequences would be its suffixes, which contradicts Lemma \ref{persuffix}.
\end{proof}

From Theorems \ref{sub}, \ref{fibhom} and Lemma \ref{noprod}, by the criteria of Theorem \ref{var}, it follows that the class of UFS-groupoids is not a variety.

\begin{mycor}\label{vars}
    The class of UFS-groupoids is closed under taking subgroupoids and homomorphic images, but is not closed even under direct products of two groupoids, and therefore is not a variety.
\end{mycor}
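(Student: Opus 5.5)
The corollary has three parts, and I will treat each one separately, assembling them from results already established. Theorem \ref{var} then gives the conclusion that the class is not a variety.

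For closure under subgroupoids I will use Theorem \ref{sub}. Let $(G,*)$ be a UFS-groupoid and $(G',*)$ a subgroupoid. If $G'=G$ there is nothing to prove. If $G'\ne G$, Theorem \ref{sub} says directly that $(G',*)$ is a UFS-groupoid, and that its universal Fibonacci sequence is a suffix of $\widehat{u}$. This gives condition 1 of Theorem \ref{var}.

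For closure under homomorphic images I will use Theorem \ref{homomorph}. It states that for any homomorphism $f:G\to G'$ into an arbitrary groupoid, the image $f((G,*))$ is a UFS-groupoid with universal Fibonacci sequence $f(\widehat{u})$. The underlying mechanism is Lemma \ref{fibhom}, which says $f$ commutes with taking Fibonacci sequences. Lemma \ref{twoel} and Theorem \ref{uniqueinv} then transport the two defining conditions: every pair occurs as a substring, and suffixes starting at equal pairs coincide. This gives condition 3.

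For the failure of closure under products I will use Lemma \ref{noprod}. The groupoid $UFSP_2$, with operation $a*b=1-a$ on $\{0,1\}$, is a UFS-groupoid. Its universal Fibonacci sequence is periodic with cycle $(0,0,1,1)$, which is a de Bruijn sequence $B(2,2)$, so Theorem \ref{ufsperde} applies. By Lemma \ref{noprod}, however, $UFSP_2\times UFSP_2$ is not a UFS-groupoid: it has two periodic Fibonacci sequences whose cycles are not cyclic shifts of each other, which contradicts Lemma \ref{persuffix}. So condition 2 of Theorem \ref{var} already fails for a family of two groupoids, and by Birkhoff's characterization the class is not a variety.

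The only step requiring care is checking that $UFSP_2$ really is a UFS-groupoid, since otherwise the counterexample says nothing. I will verify this directly from Theorem \ref{ufsperde} as indicated: the cycle $(0,0,1,1)$ has substrings $(0,0),(0,1),(1,1),(1,0)$, and these are all four pairs exactly once. The rest is citation.
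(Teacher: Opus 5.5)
Your proposal is correct and follows the paper's own argument: closure under subgroupoids comes from Theorem \ref{sub}, closure under homomorphic images from Theorem \ref{homomorph} (via Lemma \ref{fibhom}), and the failure for $UFSP_2\times UFSP_2$ from Lemma \ref{noprod}, all combined through Birkhoff's criterion, Theorem \ref{var}. Two things the paper leaves implicit are handled explicitly in your version: the improper subgroupoid $G'=G$, which Theorem \ref{sub} does not cover, and the check that $UFSP_2$ is a UFS-groupoid via the de Bruijn cycle $(0,0,1,1)$.
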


\subsection{Main classification theorem}

A number of theorems in this paper can be combined into a final classification theorem.

\begin{mythm}\label{tt}
     UFS-groupoids $(G, *)$ are locally cyclic, non-power-associative, and contain no neutral or zero elements. Moreover, depending on the number of elements and the periodicity of the universal Fibonacci sequence $\widehat{u}$, they can be divided into:

    \begin{enumerate}
        \item Finite UFS-groupoids, $|G| = n$:
        \begin{enumerate}
            \item UFS-groupoids with periodic $\widehat{u}$ with cycle $(u_1, \ldots, u_{n^2})$: 
            these are finite cyclic right quasigroups without idempotent elements and without proper subgroupoids. Such groupoids are naturally bijective to the set of de Bruijn sequences $B(n, 2)$; the de Bruijn sequences are precisely $(u_1, \ldots, u_{n^2})$.
            \item UFS-groupoids with singly infinite strictly preperiodic $\widehat{u}$ with preperiodic part $(u_1, \ldots, u_l)$ and periodic part $(u_{l+1}, \ldots, u_{n^2})$: 
            they satisfy $u_1 = u_{l+1}$, are finite cyclic groupoids in which the right cancellation property fails only for $(u_{l+1}, u_{l+2})$, and the right quasigroup property fails only for $(u_{l+1}, u_{l+2})$ and $(u_1, u_2)$. Such groupoids are uniquely obtained from UFS-groupoids with periodic $\widehat{u}$, and hence from de Bruijn sequences, by arbitrarily changing the operation on an arbitrary pair of elements.
        \end{enumerate}
        \item Countable UFS-groupoids, $|G| = \infty$:
        \begin{enumerate}
            \item UFS-groupoids with singly infinite aperiodic $\widehat{u} = (u_1, u_2, \ldots)$: 
            these are infinite cyclic groupoids with right cancellation, without idempotent elements and without proper subgroupoids, in which the right quasigroup property fails only for $(u_1, u_2)$.
            \item UFS-groupoids with doubly infinite aperiodic $\widehat{u}$: 
            these are infinite cyclic right quasigroups without idempotent elements, in which there can be at most one proper subgroupoid, and any such subgroupoid must be infinite.
            \item UFS-groupoids with doubly infinite strictly preperiodic $\widehat{u}$ with periodic part $(u_{l+1}, \ldots, u_{n^2})$: 
            these are infinite groupoids in which both the right cancellation property and the right quasigroup property fail only for $(u_{l+1}, u_{l+2})$, and all proper subgroupoids are finite.
        \end{enumerate}
    \end{enumerate}
\end{mythm}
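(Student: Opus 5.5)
This is a summary theorem, so the plan is to assemble it from results already proved, checking each clause against its source. No new construction is needed. The only real work is matching the specific claims in each case to the right earlier statements.

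\textbf{Global properties.} These follow directly:
\begin{itemize}
\item local cyclicity is Theorem \ref{cyclic};
\item failure of power associativity (for $|G|>1$) is Theorem \ref{assoc};
\item absence of neutral and zero elements is Lemmas \ref{neutral} and \ref{zero}.
\end{itemize}

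\textbf{The split into five classes.} By Lemma \ref{card}, $G$ is finite or countable. Theorem \ref{periodic} says $G$ is finite exactly when $\widehat{u}$ is periodic or singly infinite strictly preperiodic. So in the countable case $\widehat{u}$ is singly infinite aperiodic, doubly infinite aperiodic, or doubly infinite strictly preperiodic. A doubly infinite periodic $\widehat{u}$ is already covered by case 1(a) via Theorem \ref{unique}.

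\textbf{Case 1(a), periodic $\widehat{u}$.}
\begin{itemize}
\item The de Bruijn bijection is Theorem \ref{ufsperde} together with the uniqueness up to cyclic shift in Theorem \ref{unique}.
\item Cyclicity is Corollary \ref{cyclicfin}.
\item The right quasigroup property is Theorem \ref{rightq}.
\item There is no idempotent, since by Theorem \ref{idem} an idempotent forces strict preperiodicity when $|G|>1$.
\item There are no proper subgroupoids by Theorem \ref{nosubufs}.
\end{itemize}

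\textbf{Case 1(b), singly infinite strictly preperiodic $\widehat{u}$.}
\begin{itemize}
\item $u_1=u_{l+1}$ is Lemma \ref{preperidoicfirst}.
\item Cyclicity is Corollary \ref{cyclicfin}.
\item The failure of right cancellation exactly at $(u_{l+1},u_{l+2})$ is Theorem \ref{right}.
\item The failure of the right quasigroup property at exactly $(u_{l+1},u_{l+2})$ and $(u_1,u_2)$ is Theorem \ref{rightq}.
\item The correspondence with periodic UFS-groupoids via a one-pair modification is Theorems \ref{preperiodic} and \ref{perp}.
\end{itemize}

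\textbf{Case 2(a), singly infinite aperiodic $\widehat{u}$.}
\begin{itemize}
\item Cyclicity is Corollary \ref{cyclnodoublestr}.
\item Right cancellation is Theorem \ref{right}.
\item The right quasigroup property fails only at $(u_1,u_2)$ by Theorem \ref{rightq}.
\item There is no idempotent because Theorem \ref{idem} would force a periodic part $(i)$.
\item There are no proper subgroupoids by Theorem \ref{nosubufs}.
\end{itemize}

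\textbf{Case 2(b), doubly infinite aperiodic $\widehat{u}$.}
\begin{itemize}
\item Cyclicity is Corollary \ref{cyclnodoublestr}.
\item The right quasigroup property is Theorem \ref{rightq}.
\item There is no idempotent, as in 2(a).
\item There is at most one proper subgroupoid by Theorem \ref{onesubfs}. That subgroupoid is infinite because its universal Fibonacci sequence is singly infinite aperiodic, so Theorem \ref{periodic} excludes finiteness.
\end{itemize}

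\textbf{Case 2(c), doubly infinite strictly preperiodic $\widehat{u}$.}
\begin{itemize}
\item Theorems \ref{right} and \ref{rightq} give that both properties fail exactly at the first pair of the periodic part.
\item Every proper subgroupoid is finite by Theorem \ref{strictsubs}.
\end{itemize}

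\textbf{Main obstacle.} The hardest part is bookkeeping rather than mathematics, and there are two spots to watch. First, in case 2(c) the indices of the periodic part $(u_{l+1},\ldots,u_{n^2})$ are borrowed from the finite notation, so I would read them simply as ``the periodic part beginning at some position $u_{l+1}$''. Second, in case 1(b), a periodic part of length $1$ or $2$ makes $u_{l+2}$ wrap around the cycle. Theorem \ref{right} already handles this degenerate case, so I would just remark that the indices are taken cyclically within the periodic part.
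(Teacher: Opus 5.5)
Your proposal is correct and takes the same route as the paper: the paper's proof likewise just assembles the earlier results, and your clause-by-clause matching is more complete than its citation list, for instance supplying Theorems \ref{ufsperde} and \ref{unique} for the de Bruijn bijection, Theorems \ref{rightq} and \ref{nosubufs} for case 1(a), and Theorem \ref{periodic} for the infiniteness of the subgroupoid in case 2(b). The $|G|>1$ caveat you attach is genuinely needed, and it should apply to all the global clauses and to the idempotent clause of 1(a), because the one-element groupoid $UFSP_1$ has a periodic universal Fibonacci sequence and an element that is idempotent, neutral and zero.
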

\begin{proof}
    The first properties in the statement follow from Lemmas \ref{neutral} and \ref{zero}, Theorems \ref{cyclic} and \ref{assoc}. The cardinality is at most countable by Lemma \ref{card}. The first part of the classification for finite groupoids follows from Theorems \ref{periodic}, \ref{preperiodic}, \ref{preperidoicfirst}, \ref{perp}, \ref{idem}, \ref{right}. The remaining properties follow from Theorems \ref{idem}, \ref{right}, \ref{rightq}, \ref{nosubufs}, \ref{onesubfs}, \ref{cyclnodoublestr}, \ref{strictsubs}.
\end{proof}

\section{Construction of UFS-groupoids}\label{construct}

In the previous sections, we have given a detailed classification of UFS-groupoids, but we have not presented explicit examples and constructions. In this section, we provide various constructions of UFS-groupoids. We divide the constructions according to whether the groupoid is finite or infinite and according to the type of periodicity of its universal Fibonacci sequence.

The key method for constructing UFS-groupoids $(G, *)$ will be to explicitly specify their universal Fibonacci sequence $\widehat{u}$; by Theorem \ref{uniqueinv}, this sequence uniquely determines the UFS-groupoid itself. The scheme for constructing a UFS-groupoid with the desired properties according to this theorem is as follows:

\begin{enumerate}
\item Choose a set $G$.
\item Construct a singly or doubly infinite sequence $\widehat{u}$ of elements of $G$ such that
\begin{enumerate}
\item Every pair of elements $a, b \in G$ occurs consecutively in $\widehat{u}$, that is,
$$\widehat{u} = (\ldots, a, b, \ldots);$$
\item All suffixes of $\widehat{u}$ that begin with the same substring $(a, b)$ coincide.
\end{enumerate}
\item During the construction of $\widehat{u}$, additional properties (such as periodicity) are achieved.
\item We then obtain that $\widehat{u}$ is a universal Fibonacci sequence with the desired properties for some UFS-groupoid $(G, *)$, which is constructed as follows: $a * b = c$ if and only if
$$\widehat{u} = (\ldots, a, b, c, \ldots).$$
\end{enumerate}
\subsection{Finite UFS-groupoids}

By Theorem \ref{periodic}, finite UFS-groupoids have either a periodic or a singly infinite strictly preperiodic universal Fibonacci sequence. These groupoids are uniquely described in Section \ref{fin} and the final classification Theorem \ref{tt}, up to counting their number and classifying their structure in terms of de Bruijn sequences. In this section, we will construct special constructions of finite UFS-groupoids. Without loss of generality, we will construct groupoids $(G, *)$ with $G = \{0, \ldots, n - 1\}$.

Separately, we consider a natural way to construct series of nested UFS-groupoids $$(G_1, *) \subset (G_2, *) \subset \ldots$$ 
by extending the old operation. In other words, we can define from a UFS-groupoid $(\{0, \ldots, n - 1\}, *_n)$ a new UFS-groupoid $(\{0, \ldots, n\}, *_{n + 1})$ by the rule
$$ a *_{n + 1} b = \begin{cases}
a *_n b, & \text{if } a, b < n; \\
\text{defined in some way otherwise.}
\end{cases}$$

We shall study the possibility of doing this for various periodicity classes.

\subsubsection{Periodic universal Fibonacci sequences}

We begin with the fact that all UFS-groupoids with periodic universal Fibonacci sequences can be constructed by means of de Bruijn sequences $(u_1, u_2, \ldots, u_{n^2})$ of type $B(n,2)$ according to Theorem \ref{ufsperde}, using them as cycles of the universal Fibonacci sequences. Note that we cannot construct series of nested UFS-groupoids with a periodic universal Fibonacci sequence by Theorem \ref{nosubufs}, since we would then obtain that a UFS-groupoid with a periodic universal Fibonacci sequence has a proper subgroupoid. In other words, the corollary holds.

\begin{mycor}
    Consider a series of groupoids
$$G_1, \ldots, G_{n-1}, G_n, \ldots$$
of the form
$$G_i = (\{0, \ldots, i-1\}, *_i)$$
such that
$$ a *_i b = \begin{cases}
a *_{i-1} b, \text{ if } a, b < i - 1; \\
\text{defined otherwise in some way.}
\end{cases}$$
Then if $G_k$ is a UFS-groupoid with a periodic universal Fibonacci sequence, then $G_l$ for $l > k$ are not UFS-groupoids with a periodic universal Fibonacci sequence.

\end{mycor}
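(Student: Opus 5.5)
The plan is to derive the corollary directly from Theorem \ref{nosubufs}, by showing that $G_k$ sits inside $G_l$ as a proper subgroupoid. Suppose $G_k$ is a UFS-groupoid with a periodic universal Fibonacci sequence, and take some $l > k$. By induction on $i$ from $k+1$ up to $l$, the defining rule of the series gives $a *_i b = a *_{i-1} b$ whenever $a, b < i-1$. Chaining these equalities yields $a *_l b = a *_k b$ for all $a, b \in \{0, \ldots, k-1\}$.

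Next I check that $\{0,\ldots,k-1\}$ is closed under $*_l$. Since $*_k$ is an operation on $\{0,\ldots,k-1\}$, every value $a *_k b$ lies in that set. Hence $(\{0,\ldots,k-1\}, *_l)$ is a subgroupoid of $G_l$, and it coincides with $G_k$. It is proper because $l > k$, so the element $k-1+1 = k$ belongs to $G_l$ but not to $G_k$.

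Now suppose, for contradiction, that $G_l$ were a UFS-groupoid with a periodic universal Fibonacci sequence. Theorem \ref{nosubufs} says such a groupoid has no proper subgroupoid, but we have just exhibited one, namely $G_k$. This contradiction shows that $G_l$ is not a UFS-groupoid with a periodic universal Fibonacci sequence.

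The only real work is the bookkeeping step: the recursion links only consecutive members of the series, so the restriction equality must be propagated through all intermediate indices. A single induction on $i$ handles this. Note that the argument never uses the hypothesis on $G_k$ beyond its being a groupoid on $\{0,\ldots,k-1\}$; the periodicity hypothesis is only relevant to $G_l$ through Theorem \ref{nosubufs}. For the same reason no $G_l$ with $l > k$ can be of the periodic type, whatever $G_k$ is, as long as $k \ge 1$.
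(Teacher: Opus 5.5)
Your proof is correct and follows the same route as the paper, which justifies the corollary by observing that $G_k$ would be a proper subgroupoid of $G_l$, contradicting Theorem \ref{nosubufs}; you additionally spell out the induction showing that $*_l$ restricts to $*_k$, the closure check and the properness check. Your closing remark is also right: $G_1$, with $0 *_1 0 = 0$, embeds in every $G_l$, so no $G_l$ with $l \ge 2$ can have a periodic universal Fibonacci sequence, whatever is assumed about $G_k$.
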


Below we construct a construction $UFSP_n$, of UFS-groupoids with a periodic universal Fibonacci sequence, each time extending with a change of the old operation on exactly one pair of elements and defining the operation on the remaining ones. Thus, this series of groupoids admits an inductive procedure with the smallest possible number of changes to the old operation. Moreover, the operation and the cycle of the periodic universal Fibonacci sequence in $UFSP_n$ admit a compact description in terms of formulas.

We define $UFSP_n$ for sets of cardinalities $1$ and $2$, and then formulate the general theorem. The propositions are verified by definition.

\begin{myprop}\label{onelUFS}
    The groupoid $UFSP_1 = (\{0\}, *_1)$ with $0 *_1 0 = 0$ is a UFS-groupoid with a periodic universal Fibonacci sequence with cycle $(0)$.
\end{myprop}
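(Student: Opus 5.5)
The plan is to check the statement directly, either from the definition of a universal Fibonacci sequence or through the criterion of Theorem \ref{uniqueinv}, or equivalently item (1) of Theorem \ref{norepeat}. The groupoid $UFSP_1$ has the single element $0$ and the single product $0 *_1 0 = 0$. I take $\widehat{u}$ to be the constant sequence $(0, 0, 0, \ldots)$, singly or doubly infinite. It is periodic with period $1$ and cycle $(0)$.

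First I verify the two conditions of Theorem \ref{uniqueinv}. The set $G^2$ consists of the single pair $(0,0)$, and it occurs as a substring of $\widehat{u}$ at every position. Every suffix of $\widehat{u}$ is the constant sequence $(0,0,\ldots)$, so any two suffixes starting with $(0,0)$ coincide. Theorem \ref{uniqueinv} then says that $\widehat{u}$ is a universal Fibonacci sequence of a uniquely determined groupoid on $\{0\}$. The operation it recovers is $0 * 0 = 0$, since $\widehat{u} = (\ldots, 0, 0, 0, \ldots)$, and this is exactly $*_1$.

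Alternatively, I can use Theorem \ref{norepeat}(1): the cycle $(0)$ yields the single cyclic pair $(u_1, u_1) = (0,0)$, and this pair is all of $G^2$. It is also the de Bruijn sequence $B(1,2)$ of length $1 = 1^2$, so Theorem \ref{ufsperde} applies too.

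Finally, the period is $1$, which agrees with Theorem \ref{periodic}, since $n^2 = 1$. By Theorem \ref{unique}, the universal Fibonacci sequence is determined up to cyclic shift of its cycle, and a cycle of length one has no other shifts, so the cycle is exactly $(0)$.

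There is no real obstacle here. The only point needing care is to say explicitly that the single Fibonacci sequence $Fib(0,0)$ is a suffix of $\widehat{u}$ and that $\widehat{u}$ is periodic rather than strictly preperiodic. Both are immediate because the sequence is constant, so it has no preperiodic part.
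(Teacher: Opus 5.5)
Your proof is correct and matches the paper, which simply says the proposition is ``verified by definition''. Your check via Theorem \ref{uniqueinv} is exactly that direct verification: the lone pair $(0,0)$ occurs in the constant sequence, all its suffixes coincide, and the recovered operation is $0 *_1 0 = 0$. The extra cross-checks via Theorems \ref{norepeat}, \ref{ufsperde}, \ref{periodic} and \ref{unique} are consistent but not needed.
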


\begin{myprop}\label{twoelUFS}
   The groupoid $UFSP_2 = (\{0, 1\}, *_2)$ can be defined by the formula $a *_2 b = 1 - a$ and is a UFS-groupoid with a periodic universal Fibonacci sequence with cycle $(0, 0, 1, 1)$.
\end{myprop}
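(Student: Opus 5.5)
The plan is to verify Proposition \ref{twoelUFS} directly through the criterion of Theorem \ref{ufsperde}, or equivalently Theorem \ref{norepeat}(1), together with the reconstruction rule of Theorem \ref{uniqueinv}. I would take the doubly infinite periodic sequence $\widehat{u}$ with cycle $(0,0,1,1)$ and check two things. First, that this cycle is a de Bruijn sequence $B(2,2)$. Second, that the groupoid it determines by the rule $a * b = c$ iff $\widehat{u} = (\ldots, a, b, c, \ldots)$ is exactly the operation $a *_2 b = 1 - a$.

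For the first step, I will list the cyclic substrings of length $2$ of $(0,0,1,1)$: they are $(0,0)$, $(0,1)$, $(1,1)$ and, across the cyclic boundary, $(1,0)$. These four pairs are pairwise distinct and exhaust $\{0,1\}^2$, and the period is $4 = 2^2$, as Theorem \ref{periodic} requires. Hence the cycle is a $B(2,2)$, and by Theorem \ref{ufsperde} $\widehat{u}$ is a universal Fibonacci sequence of some groupoid over $\{0,1\}$.

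For the second step, I will read off the element following each pair in the periodic sequence $\ldots,0,0,1,1,0,0,1,1,\ldots$:
\[
(0,0)\mapsto 1,\quad (0,1)\mapsto 1,\quad (1,1)\mapsto 0,\quad (1,0)\mapsto 0.
\]
In every case the result equals $1-a$, where $a$ is the first element of the pair. By the uniqueness part of Theorem \ref{uniqueinv}, the groupoid determined by $\widehat{u}$ is therefore $UFSP_2$. As a cross-check, one can verify directly that $Fib(0,0) = (0,0,1,1,0,0,\ldots)$ under $a*b=1-a$.

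There is no real obstacle here. The only point needing care is the wrap-around pair $(1,0)$, which must be counted as a substring of the cyclic sequence; this is needed both for the de Bruijn property and for determining $1 *_2 0$.
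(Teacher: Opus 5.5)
Your proposal is correct and is the same direct finite check the paper means when it says the proposition is ``verified by definition''. You confirm that $(0,0,1,1)$ is a $B(2,2)$ cycle, so Theorem \ref{ufsperde} applies, and that reading off successors, including the wrap-around pair $(1,0)$, reproduces $a *_2 b = 1 - a$. Routing this through Theorems \ref{ufsperde} and \ref{uniqueinv}, rather than checking that the four sequences $Fib(a,b)$ are suffixes, is only a tidier packaging of the same verification.
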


\begin{mythm}\label{fibperfin}
  Define on $\{0, 1, \ldots, n - 1\}$ for $n > 1$ the operation $*_n$ by \begin{align*}
    	a *_n b =  \begin{cases}
\max{(0, 1 - b)}, \text{ if } a = b;\\
(a + \max{(1, b - 1)}\bmod{b + 1}) + I(a + \max{(1, b - 1)} \ge b + 1) , \text{ if } a < b; \\
a + \max{(0, 1 - b)}\bmod{n}, \text{ if } a > b. \\
 \end{cases}  
 \end{align*}
 where $I(A) = 1$ if $A$ is true and $0$ otherwise.

 Then $UFSP_n = (\{0, 1, \ldots, n - 1\}, *_n)$ is a UFS-groupoid with a periodic universal Fibonacci sequence with cycle $(u_1, \ldots, u_{n^2})$, which can be computed by
\\
   
\noindent    $u_i =  \begin{cases}
0, \text{ if } i = f(i)^2 + 1;  \\
f(i), \text{ if } i = f(i)^2 + 2k,\ k=1, \ldots, f(i), \text{ or } i =  f(i)^2 + 2f(i) + 1;\\
f(i) - k, \text{ if } i = f(i)^2 + 2k + 1,\ k=1, \ldots, f(i) - 1. \\
 \end{cases}  $
\\

\noindent where  $f(i) = [\sqrt{i - 1}]$, $[]$ denotes the integer part.
\end{mythm}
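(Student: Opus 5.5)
The plan is to apply the criterion of Theorem \ref{ufsperde}. First I show that the cyclic sequence $(u_1,\ldots,u_{n^2})$ given by the formula is a de Bruijn sequence $B(n,2)$. Then I read off the operation using Theorem \ref{uniqueinv}: $a * b$ is the element that follows the unique occurrence of $(a,b)$ in the cycle. Finally I check that this operation coincides with the formula for $*_n$.

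The first step is to decode the formula for $u_i$ in terms of blocks. For $m=0,\ldots,n-1$ the indices $i$ with $f(i)=m$ are exactly $m^2+1,\ldots,(m+1)^2$, which is a block of length $2m+1$. Reading the cases of the formula, block $m$ is
$$(0,\ m,\ m-1,\ m,\ m-2,\ m,\ \ldots,\ m,\ 1,\ m,\ m).$$
Block $0$ is just $(0)$, and block $1$ is $(0,1,1)$. The cycle is the concatenation of blocks $0,1,\ldots,n-1$. For instance, $n=2$ gives $(0,0,1,1)$, in agreement with Proposition \ref{twoelUFS}.

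The second step is to list the $2m+1$ pairs that start inside block $m$; the last of them straddles the boundary into the next block, or wraps around to $u_1=0$ when $m=n-1$. These pairs are
$$(0,m),\quad (m,j) \text{ and } (j,m) \text{ for } 1\le j\le m-1,\quad (m,m),\quad (m,0).$$
This is exactly the set of pairs in $G^2$ whose maximum equals $m$, each appearing once. For $m=0$ the only pair is $(0,0)$, coming from the junction of block $0$ with block $1$. Since the sets of pairs with maximum $m$ partition $G^2$, every pair occurs exactly once in the cycle. Hence the cycle is a $B(n,2)$, and Theorem \ref{ufsperde} shows that the doubly infinite periodic sequence with this cycle is a universal Fibonacci sequence of some groupoid on $G$.

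The remaining and most error-prone step is to identify that groupoid with $*_n$. This is a case check on the successor of each pair, carried out against the three branches of the definition.

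\emph{Case $a=b=m$.} For $m\ge1$ the pair $(m,m)$ ends block $m$ and is followed by $0$, the start of the next block or $u_1$. For $m=0$ the pair $(0,0)$ is followed by $u_3=1$. Both match $\max(0,1-b)$.

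\emph{Case $a<b=m$.} The pair $(0,m)$ is followed by $m-1$ when $m\ge2$, and by $1$ when $m=1$. For $j\ge2$ the pair $(j,m)$ is followed by $j-1$, while $(1,m)$ is followed by $m$. Now evaluate the formula for $b\ge2$. With $a=0$ it gives $b-1$. With $a=1$ it gives $b \bmod (b+1) = b$, with indicator $0$. With $a=j\ge2$ it gives $(j-2)+1=j-1$. The subcase $b=1$, $a=0$ gives $1$.

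\emph{Case $a=m>b$.} The pair $(m,j)$ with $j\ge1$ is followed by $m$. The pair $(m,0)$ is followed by $m+1$, the second entry of block $m+1$, or by $u_2=0$ when $m=n-1$. Both agree with $a+\max(0,1-b) \bmod n$.

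The main obstacle I expect is exactly this bookkeeping at the block boundaries and in the small cases $m\in\{0,1\}$ and $a=1$, where the $\max$ and the indicator terms of the formula do their work. Once those cases are handled, Theorem \ref{uniqueinv} completes the proof.
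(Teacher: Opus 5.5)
Your proof is correct, but it is organized differently from the paper's. The paper argues by induction on $n$ via Construction~\ref{indper}. It takes the cycle of $UFSP_n$, redirects the successor of $(n-1,0)$ from $0$ to $n$, and appends the block $(0,n,n-1,n,\ldots,n,1,n,n)$. It then checks the operation formula only on pairs involving the new element $n$ and on the one modified pair $(n-1,0)$. The de Bruijn property of the extended cycle and the correctness of the $u_i$ formula are asserted as ``easy to see.''

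You instead unroll that induction into a closed-form statement. The cycle is the concatenation of blocks $0,\ldots,n-1$, and block $m$ accounts for exactly the $2m+1$ pairs with maximum coordinate $m$. The partition of $G^2$ by maximum then gives the de Bruijn property in one stroke, and a single global case check identifies the read-off operation with $*_n$.

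Your version is self-contained and makes explicit the steps the paper leaves to the reader, including the small cases $m\in\{0,1\}$ and $a=1$. The paper's version instead foregrounds the minimal-change extension $UFSP_n\to UFSP_{n+1}$ and the nesting of cycles, which it needs for the subsequent corollary. Both facts also follow from your block description. The cycle for $n$ is a prefix of the cycle for $n+1$, and the formula for $*_n$ depends on $n$ only through the $\bmod\, n$ in the third branch, which matters only at $(a,b)=(n-1,0)$.
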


We outline the main scheme of the proof of this theorem below. It is easy to see that the groupoid constructed in the statement of \ref{twoelUFS} satisfies the conditions of the theorem, and the following statement follows from it.

\begin{myprop}\label{ufsfibfin}
    The cycle of the periodic universal Fibonacci sequence for $UFSP_n$ contains the substrings $(0, 0)$ and $(n - 1, n - 1)$ at the beginning and at the end, that is, it has the form $(0, 0, \ldots, n - 1, n - 1)$.

\end{myprop}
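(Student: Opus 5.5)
The plan is to derive the statement from the operation $*_n$ in Theorem \ref{fibperfin} alone, taking as given that $UFSP_n$ is a UFS-groupoid with a periodic universal Fibonacci sequence. The cycle is only determined up to a cyclic shift (Theorem \ref{unique}), so the real content is this: in the cyclic order, the pair $(n-1,n-1)$ is immediately followed by $(0,0)$. Once that is shown, we shift the cycle so that it ends with the unique occurrence of $(n-1,n-1)$; by Theorem \ref{norepeat} (equivalently Theorem \ref{ufsperde}) every pair occurs exactly once in the cycle. The cycle then has the form $(0,0,\ldots,n-1,n-1)$.

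\textbf{Step 1: the element after $(n-1,n-1)$.} We place $(n-1,n-1)$ at positions $n^2-1,n^2$. Then $u_1=u_{n^2+1}=(n-1)*_n(n-1)$, by Theorem \ref{uniqueinv}. This falls in the case $a=b$, so it equals $\max(0,1-(n-1))=\max(0,2-n)=0$, since $n>1$.

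\textbf{Step 2: the next element.} The following term is $u_2=u_{n^2}*_n u_1=(n-1)*_n 0$. Here $a=n-1>b=0$, so the third case of the definition applies and gives $(n-1)+\max(0,1-0)\bmod n = n \bmod n = 0$. Hence the cycle begins $(0,0,\ldots)$ and ends $(\ldots,n-1,n-1)$, as claimed.

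\textbf{Step 3: consistency with the explicit formula.} We check this against the formula for $u_i$ in Theorem \ref{fibperfin}, with $f(i)=[\sqrt{i-1}]$.
\begin{itemize}
\item For $i=1$: $f(1)=0$ and $1=0^2+1$, so $u_1=0$.
\item For $i=2$: $f(2)=1$ and $2=1^2+1$, so $u_2=0$.
\item For $i=n^2-1$: $f(n^2-1)=n-1$ and $n^2-1=(n-1)^2+2(n-1)$, which is the case $k=f(i)$, so $u_{n^2-1}=n-1$.
\item For $i=n^2$: $f(n^2)=n-1$ and $n^2=(n-1)^2+2(n-1)+1$, so $u_{n^2}=n-1$.
\end{itemize}
Thus the indexing in the formula is exactly the shift chosen above.

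The only delicate point is the case analysis in $*_n$ at the boundary values $b=0$ and $a=b=n-1$, specifically how $\max$ and $\bmod$ behave there. I expect no real obstacle beyond confirming that these two evaluations use the intended branches. The case $n=2$ can be checked directly against Proposition \ref{twoelUFS}, where the cycle is $(0,0,1,1)$.
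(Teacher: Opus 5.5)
Your argument is correct. The paper gives no separate proof: it says the proposition follows from Theorem \ref{fibperfin}. That is, one reads $u_1=u_2=0$ and $u_{n^2-1}=u_{n^2}=n-1$ off the explicit cycle formula. Equivalently, one sees it from Construction \ref{indper}: the base cycle is $(0,0,1,1)$, and each new cycle is the old one followed by $0,n,n-1,n,\ldots,1,n,n$, so it keeps the prefix $(0,0)$ and acquires the suffix $(n,n)$.

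Your Step 3 is exactly that reading of the formula. Your Steps 1--2 take a different and more economical route. They use only two facts: that $UFSP_n$ is a UFS-groupoid with periodic universal Fibonacci sequence, and the two table entries $(n-1)*_n(n-1)=0$ and $(n-1)*_n0=0$. From these you obtain the shift-invariant fact that the unique occurrence of $(n-1,n-1)$ is immediately followed by $(0,0)$, without depending on the more intricate formula for $u_i$. This adjacency is precisely what the inductive step exploits when it breaks the old cycle at the pair $(n-1,0)$.

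Because you use only the level-$n$ instance of Theorem \ref{fibperfin}, your derivation fits into the paper's joint induction without circularity; there, the proposition at level $n$ feeds the step to $n+1$. Your reading of the third case as $(a+\max(0,1-b)) \bmod n$ is the intended one, as the tables for $n=5,10$ confirm.
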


We describe explicitly how to construct $UFSP_n$ inductively.

\begin{mycon}\label{indper}

Inductive construction of $UFSP_n$ for $n > 1$:
\begin{enumerate}
    \item First, construct $UFSP_2 = (\{0, 1\}, *_2)$ as in Proposition \ref{twoelUFS}.
    \item By the induction hypothesis, suppose $UFSP_n = (\{0, \ldots, n-1\}, *_n)$ has been constructed.
    \item Construct $UFSP_{n + 1} = (\{0, 1, \ldots, n\}, *_{n + 1})$ from $UFSP_n = (\{0, 1, \ldots, n-1\}, *_n)$. To this end, define and modify the operation $*_{n + 1}$ as follows: \begin{align*}
    	a *_{n + 1} b =  \begin{cases}
a *_n b, \text{ if } (a, b) \in \{0, 1, \ldots, n - 1\}^2 \text{ except } (a, b) = (n - 1, 0); \\
a - 1, \text{ if } a \in \{2, \ldots, n - 1\}, b = n \text{ and } n \ge 3; \\
n, \text{ if } a = n, b \in \{2, \ldots, n - 1\} \text{ and } n \ge 3; \\
n, \text{ if } (a, b) \in \{(n - 1, 0), (n, 1), (1, n)\};  \\
\max(1, n - 1), \text{ if } a = 0, b = n  \\
0, \text{ if } (a, b)  \in \{(n, 0), (n, n)\}.  \\
 \end{cases}  
 \end{align*}

\end{enumerate}
\end{mycon}

Thus, we can obtain $UFSP_{n + 1}$ from $UFSP_n$ by changing the operation on one pair of elements $(n-1, 0)$ and defining the operation on the newly arising pairs of elements.

We now present the proof of Theorem \ref{fibperfin}:

\begin{proof}[Proof of Theorem 6.4]
    We will use the inductive procedure \ref{indper}, additionally assuming that the groupoids and the cycles of their universal Fibonacci sequences satisfy the conditions of Theorem \ref{fibperfin}. In the course of the proof, we will use a geometric interpretation of the cycle of a periodic universal Fibonacci sequence in the form of a graph.

\begin{enumerate}
\item The induction base for $UFSP_2$ is described in Proposition \ref{twoelUFS}; we draw the cycle for $UFSP_2$ of the form $(0, 0, 1, 1)$:

\begin{figure}[H]
\centering
\tikzset{every picture/.style={line width=0.75pt}} 
\begin{tikzpicture}[x=0.75pt,y=0.75pt,yscale=-1,xscale=1]

\def \n {4}
\def \radius {120}
\def \margin {9} 
\readlist \arr {$0$, $0$ , $1$ ,$1$ }

\foreach \s in {1,...,\n}
{
  \node[draw, circle, minimum width=1cm] ({oo\s}) at ({\radius * cos(90 - 360/\n * (\s - 1)) + 200}, {\radius * sin(90 - 360/\n * (\s - 1)) + 200 - \radius}) {\arr[\s]};

    \draw[<-, >=latex] ({\radius * cos(90 + 360/\n * (\s - 1) + \margin) + 200}, {\radius * sin(90 + 360/\n * (\s - 1) + \margin) + 200 - \radius})
	arc ({360/\n * (\s - 1)+\margin + 90}:{360/\n * (\s)-\margin + 90}:\radius);
}
\end{tikzpicture}
\end{figure}

    \item By the induction hypothesis, $UFSP_n$ is constructed; it is a UFS-groupoid with a periodic universal Fibonacci sequence satisfying Theorem \ref{fibperfin}. By Proposition \ref{ufsfibfin}, its cycle has the form $(0, 0, \ldots, n-1, n-1)$:
\begin{figure}[H]
\centering
\tikzset{every picture/.style={line width=0.75pt}} 
\begin{tikzpicture}[x=0.75pt,y=0.75pt,yscale=-1,xscale=1]

\def \n {4}
\def \radius {120}
\def \margin {9} 
\readlist \arr {$0$, $0$ ,\small{$n - 1$} ,\small{$n - 1$} }

\foreach \s in {1,...,\n}
{
  \node[draw, circle, minimum width=1cm] ({oo\s}) at ({\radius * cos(90 - 360/\n * (\s - 1)) + 200}, {\radius * sin(90 - 360/\n * (\s - 1)) + 200 - \radius}) {\arr[\s]};
  
  \ifnum\s=3
    \draw[<-, >=latex, loosely dotted, line width=1pt] ({\radius * cos(90 + 360/\n * (\s - 1) + \margin) + 200}, {\radius * sin(90 + 360/\n * (\s - 1) + \margin) + 200 - \radius})
	arc ({360/\n * (\s - 1)+\margin + 90}:{360/\n * (\s)-\margin + 90}:\radius);
  \else
    \draw[<-, >=latex] ({\radius * cos(90 + 360/\n * (\s - 1) + \margin) + 200}, {\radius * sin(90 + 360/\n * (\s - 1) + \margin) + 200 - \radius})
	arc ({360/\n * (\s - 1)+\margin + 90}:{360/\n * (\s)-\margin + 90}:\radius);
  \fi
}

\end{tikzpicture}
\end{figure}

\item We define $*_{n + 1}$ in $UFSP_{n + 1}$ from $*_n$. Following Theorem \ref{ufsperde}, it suffices to modify the cycle of the universal Fibonacci sequence in $UFSP_n$ so that all pairs $(a, b) \in \{0, \ldots, n\}^2$ appear exactly once.

\item We extend the cycle by inserting new pairs of elements containing $n$ as one of the coordinates. To do this, we "break" the old cycle by sending the last edge after the substring $(n-1, n-1)$ to $n$ instead of $0$.

 \begin{figure}[H]
\centering
\tikzset{every picture/.style={line width=0.75pt}} 
\begin{tikzpicture}[x=0.75pt,y=0.75pt,yscale=-1,xscale=1]

\def \n {4}
\def \radius {120}
\def \margin {9} 
\readlist \arr {$0$, $0$,\small{$n - 1$},\small{$n - 1$} }

\foreach \s in {1,...,\n}
{
  \node[draw, circle, minimum width=1cm] ({oo\s}) at ({\radius * cos(90 - 360/\n * (\s - 1)) + 200}, {\radius * sin(90 - 360/\n * (\s - 1)) + 200 - \radius}) {\arr[\s]};
  
  \ifnum\s=3
    \draw[<-, >=latex, loosely dotted, line width=1pt] ({\radius * cos(90 + 360/\n * (\s - 1) + \margin) + 200}, {\radius * sin(90 + 360/\n * (\s - 1) + \margin) + 200 - \radius})
	arc ({360/\n * (\s - 1)+\margin + 90}:{360/\n * (\s)-\margin + 90}:\radius);
  \else
    \ifnum\s=1
    \else
    \draw[<-, >=latex] ({\radius * cos(90 + 360/\n * (\s - 1) + \margin) + 200}, {\radius * sin(90 + 360/\n * (\s - 1) + \margin) + 200 - \radius})
	arc ({360/\n * (\s - 1)+\margin + 90}:{360/\n * (\s)-\margin + 90}:\radius);
    \fi
  \fi
}

\node[draw, circle, minimum width=1cm] (n-1) at ({200 - 2*\radius + 30}, {200 - \radius + 90}) {\small{$n - 1$}};

\draw[<-, >=latex] (n-1) -- ({\radius * cos(90 + 360/\n * 1 + \margin) + 200}, {\radius * sin(90 + 360/\n * 1 + \margin - 20) + 200 - \radius});

\end{tikzpicture}
\end{figure}

After the break, we try to insert all new pairs of elements containing $n$, ending the cycle with the substring $(n, n)$, that is, we construct a cycle of the form $(0, 0, \ldots, n-1, n-1, n, ???, n, n)$, filling in $???$ between the current end of the break and the end of the new cycle.

 \begin{figure}[H]
\centering
\tikzset{every picture/.style={line width=0.75pt}} 
\begin{tikzpicture}[x=0.75pt,y=0.75pt,yscale=-1,xscale=1]

\def \n {7}
\def \radius {120}
\def \margin {9} 
\readlist \arr {$0$, $0$ ,\small{$n - 1$}, \small{$n - 1$}, \small{$n$},  \small{$n$} ,\small{$n$} }

\foreach \s in {1,...,\n}
{
  \node[draw, circle, minimum width=1cm] ({oo\s}) at ({\radius * cos(90 - 360/\n * (\s - 1)) + 200}, {\radius * sin(90 - 360/\n * (\s - 1)) + 200 - \radius}) {\arr[\s]};
  
  \ifnum\s=3 \relax
    \draw[<-, >=latex, loosely dotted, line width=1pt] ({\radius * cos(90 + 360/\n * (\s - 1) + \margin) + 200}, {\radius * sin(90 + 360/\n * (\s - 1) + \margin) + 200 - \radius})
    arc ({360/\n * (\s - 1)+\margin + 90}:{360/\n * (\s)-\margin + 90}:\radius);
    \node at ({200 + \radius * cos(90 + 360/\n * (\s - 0.5))}, {200 - \radius + \radius * sin(90 + 360/\n * (\s - 0.5))}) {\Huge\textbf{?}};
\else
    \ifnum\s=6 \relax
        \draw[<-, >=latex, loosely dotted, line width=1pt] ({\radius * cos(90 + 360/\n * (\s - 1) + \margin) + 200}, {\radius * sin(90 + 360/\n * (\s - 1) + \margin) + 200 - \radius})
        arc ({360/\n * (\s - 1)+\margin + 90}:{360/\n * (\s)-\margin + 90}:\radius);
    \else
        \draw[<-, >=latex] ({\radius * cos(90 + 360/\n * (\s - 1) + \margin) + 200}, {\radius * sin(90 + 360/\n * (\s - 1) + \margin) + 200 - \radius})
        arc ({360/\n * (\s - 1)+\margin + 90}:{360/\n * (\s)-\margin + 90}:\radius);
    \fi
\fi

}

\end{tikzpicture}
\end{figure}

\item We complete the cycle at the $???$ part, that is, we define the operation for all pairs containing $n$. Following the inductive procedure, in which $n * k = n$ and $k * n = k-1$ for $1 \le k < n$, we obtain a new cycle of the form
$$(\underbrace{0, 0, \ldots, n-1, n-1}_{\text{cycle of UFS for } UFSP_n}, 0, n, n-1, n, n-2, \ldots, n, 2, n, 1, n, n).$$
It is easy to see that this cycle contains every possible substring of length $2$ exactly once. Thus, by Theorem \ref{ufsperde}, we have constructed a periodic universal Fibonacci sequence on a groupoid on the set $\{0, \ldots, n\}$.

\begin{figure}[H]
\centering
\tikzset{every picture/.style={line width=0.75pt}} 
\begin{tikzpicture}[x=0.75pt,y=0.75pt,yscale=-1,xscale=1]

\def \n {4}
\def \radius {120}
\def \margin {9} 
\readlist \arr {$0$, $0$ ,\small{$n - 1$} ,\small{$n - 1$} }

\foreach \s in {1,...,\n}
{
  \node[draw, circle, minimum width=1cm] ({oo\s}) at ({\radius * cos(90 - 360/\n * (\s - 1)) + 200}, {\radius * sin(90 - 360/\n * (\s - 1)) + 200 - \radius}) {\arr[\s]};
  
  \ifnum\s=3
    \draw[<-, >=latex, loosely dotted, line width=1pt] ({\radius * cos(90 + 360/\n * (\s - 1) + \margin) + 200}, {\radius * sin(90 + 360/\n * (\s - 1) + \margin) + 200 - \radius})
	arc ({360/\n * (\s - 1)+\margin + 90}:{360/\n * (\s)-\margin + 90}:\radius);
  \else
    \draw[<-, >=latex] ({\radius * cos(90 + 360/\n * (\s - 1) + \margin) + 200}, {\radius * sin(90 + 360/\n * (\s - 1) + \margin) + 200 - \radius})
	arc ({360/\n * (\s - 1)+\margin + 90}:{360/\n * (\s)-\margin + 90}:\radius);
  \fi
}

\end{tikzpicture}
\end{figure}
\end{enumerate}

    It remains to prove the correctness of the formulas in the theorem.

The formula for the operation $a *_{n + 1} b$ in Theorem \ref{fibperfin} is divided into three cases, of which only the case $a > b$ depends on the cardinality $n + 1$ of the groupoid, and even then only for maximal $a$ and minimal $b$. By the induction hypothesis, $UFSP_n$ satisfies the conditions of the theorem. In constructing $*_{n + 1}$, we changed the operation $*_n$ only on the pair $(n-1, 0)$, precisely in the situation where the result depended on the cardinality of the groupoid. It is easy to see that this change is consistent with the formula. Thus, it remains to verify the correctness of the formula only for those pairs that contain $n$, given that $n \ge 2$:
    
    \begin{enumerate}
    \item If $a = b = n$, then
$$a *_{n + 1} b = n *_{n + 1} n = 0 = \max(0, 1 - b).$$
\item If $a < b = n$, we analyze the terms. The first term is $$(a + \max{(1, b - 1)})\bmod{b + 1}) = (a + n - 1) \bmod{n + 1}$$ The second term is $$ I(a + \max{(1, b - 1)} \ge b + 1) =  I(a + n - 1 \ge n + 1)$$ hich equals $1$ precisely when $a \ge 2$. 

Thus, the sum of these terms takes the following form depending on $a$:

\begin{itemize}
            \item If  $a \ge 2$, then it equals $a - 1;$
            \item If  $a = 1$, then it equals $n;$
            \item If  $a = 0$, then it equals $n - 1.$
        \end{itemize}
In all these cases, the sum of the terms coincides with the inductively defined operation.
\item If $a = n > b$, then by the formula $a *_{n + 1} b = a + \max(0, 1-b) \bmod{n + 1}$ and equals $n$, except for the case $b = 0$, in which $a *_{n + 1} b = 0$. This coincides exactly with the inductive construction.
\end{enumerate}

    Finally, we prove the correctness of the formula for the cycle of the universal Fibonacci sequence. The first $n^2$ elements of the new cycle $(u_1, \ldots, u_{(n + 1)^2})$ for $UFSP_{n + 1}$ coincide with the old cycle, which is correct by the induction hypothesis. Note that the formula for the elements of the cycle does not depend on the cardinality of the groupoid, so we have verified the correctness for these first $n^2$ elements of the cycle. Observe that the function $f$ appearing in the formula for $u_i$ exactly gives the largest number whose square is less than $i$. This exactly implies that we can interpret $i$ as an index for which we inductively construct the segment of the cycle of the universal Fibonacci sequence for a groupoid of order $n + 1$ after the previous step $n = f(i)$. It is easy to see that, in accordance with the inductive procedure, at even steps we gradually decrease the elements $u_i$ from $f(i)$ down to $1$, while at odd steps we always write $n = f(i)$, which coincides with the formula described.

\end{proof}

From the proof of the theorem, a natural corollary follows.

\begin{mycor} For the groupoids $UFSP_n$, the following properties hold:

    \begin{enumerate}
    \item The groupoid $UFSP_{n + 1}$ is obtained from $UFSP_n$ by changing the operation on one pair of elements and extending the operation to the new pairs of elements.
    \item The cycle of the periodic universal Fibonacci sequence for $UFSP_n$ is embedded in the cycle of the periodic universal Fibonacci sequence for $UFSP_{n + 1}$.
    \end{enumerate}
\end{mycor}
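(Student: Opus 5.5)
The corollary can be read off directly from Construction \ref{indper} together with the proof of Theorem \ref{fibperfin}. I take as already established, by that proof, that for every $n \ge 2$ the groupoid $UFSP_n$ produced by the inductive procedure coincides with the one given by the explicit formula. I also take it that its periodic universal Fibonacci sequence has the cycle given by the explicit formula for $u_i$, with $i = 1, \ldots, n^2$.

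For item 1, I compare $*_{n+1}$ with $*_n$ on $\{0, \ldots, n-1\}^2$ using the first line of Construction \ref{indper}. It sets $a *_{n+1} b = a *_n b$ for every pair except $(n-1, 0)$. On $(n-1, 0)$ it sets $a *_{n+1} b = n$ by the fourth line.

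It remains to check that this really is a change. By the third case of the formula in Theorem \ref{fibperfin} with $a = n-1 > b = 0$, we get $(n-1) *_n 0 = (n-1+1) \bmod n = 0 \neq n$. So exactly one old pair is modified. All other pairs involving $n$ are new, and their values are prescribed by the remaining lines of the construction. For $n=1$ the statement is vacuous, or can be checked directly against $UFSP_2$.

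For item 2, I use that the formula for $u_i$ in Theorem \ref{fibperfin} depends only on $i$ and not on $n$. This is through $f(i) = [\sqrt{i-1}]$, as already observed at the end of that proof. Hence the cycle $(u_1, \ldots, u_{n^2})$ of $UFSP_n$ is literally the initial segment of the cycle $(u_1, \ldots, u_{(n+1)^2})$ of $UFSP_{n+1}$.

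Equivalently, the new cycle exhibited in step 5 of the proof has the form
\[
(\underbrace{0, 0, \ldots, n-1, n-1}_{\text{old cycle}}, 0, n, n-1, n, \ldots, 1, n, n).
\]
So the old cycle appears in it as a consecutive block. The only old adjacency that is broken is the wrap-around from the final $n-1$ back to the initial $0$. This matches item 1, since that wrap-around encodes $(n-1) *_n (n-1)$ followed by the pair $(n-1, 0)$.

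I expect no real obstacle. The only care point is distinguishing the embedding of the cycle as a substring from an embedding as a cyclic sequence. The wrap-around edge is exactly the one modified, and it is consistent with item 1.
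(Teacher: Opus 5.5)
Your proposal is correct and follows the paper's own route: the corollary is read off from Construction \ref{indper}, where only the old pair $(n-1,0)$ is redefined (from $0$ to $n$), and from the fact that the cycle formula in Theorem \ref{fibperfin} does not depend on $n$, which makes the old cycle a prefix of the new one. One small imprecision is in your side remark: the wrap-around pair $(n-1,0)$ is not destroyed, since it reappears as $(u_{n^2},u_{n^2+1})$ because $u_{n^2+1}=0$; what changes is only the element that follows it, namely the value $(n-1)*0$ from item 1.
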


\begin{myex}
    We present for the UFS-groupoids $UFSP_n$ for $n=5, 10$ their multiplication tables and the cycles of their universal Fibonacci sequences.\begin{enumerate}
    \item $n = 5$:

    \begin{figure}[H]
    \centering
    \begin{tabular}{M|M|M|M|M|MX}
&$0$&$1$&$2$&$3$&$4$\\ \cline{1-6}
$0$&$1$ & $1$ & $1$ & $2$ & $3$\\ \cline{1-6}
$1$&$2$ & $0$ & $2$ & $3$ & $4$\\ \cline{1-6}
$2$&$3$ & $2$ & $0$ & $1$ & $1$\\ \cline{1-6}
$3$&$4$ & $3$ & $3$ & $0$ & $2$\\ \cline{1-6}
$4$&$0$ & $4$ & $4$ & $4$ & $0$\\ \cline{1-6}
    \end{tabular}
\end{figure}
 $$(0, 0, 1, 1, 0, 2, 1, 2, 2, 0, 3, 2, 3, 1, 3, 3, 0, 4, 3, 4, 2, 4, 1, 4, 4)$$
 \item $n = 10:$

 \begin{figure}[H]
    \centering
    \begin{tabular}{M|M|M|M|M|M|M|M|M|M|MX}
&$0$&$1$&$2$&$3$&$4$&$5$&$6$&$7$&$8$&$9$\\ \cline{1-11}
$0$&$1$ & $1$ & $1$ & $2$ & $3$ & $4$ & $5$ & $6$ & $7$ & $8$\\ \cline{1-11}
$1$&$2$ & $0$ & $2$ & $3$ & $4$ & $5$ & $6$ & $7$ & $8$ & $9$\\ \cline{1-11}
$2$&$3$ & $2$ & $0$ & $1$ & $1$ & $1$ & $1$ & $1$ & $1$ & $1$\\ \cline{1-11}
$3$&$4$ & $3$ & $3$ & $0$ & $2$ & $2$ & $2$ & $2$ & $2$ & $2$\\ \cline{1-11}
$4$&$5$ & $4$ & $4$ & $4$ & $0$ & $3$ & $3$ & $3$ & $3$ & $3$\\ \cline{1-11}
$5$&$6$ & $5$ & $5$ & $5$ & $5$ & $0$ & $4$ & $4$ & $4$ & $4$\\ \cline{1-11}
$6$&$7$ & $6$ & $6$ & $6$ & $6$ & $6$ & $0$ & $5$ & $5$ & $5$\\ \cline{1-11}
$7$&$8$ & $7$ & $7$ & $7$ & $7$ & $7$ & $7$ & $0$ & $6$ & $6$\\ \cline{1-11}
$8$&$9$ & $8$ & $8$ & $8$ & $8$ & $8$ & $8$ & $8$ & $0$ & $7$\\ \cline{1-11}
$9$&$0$ & $9$ & $9$ & $9$ & $9$ & $9$ & $9$ & $9$ & $9$ & $0$\\ \cline{1-11}
    \end{tabular}
\end{figure}

 $$(0, 0, 1, 1, 0, 2, 1, 2, 2, 0, 3, 2, 3, 1, 3, 3, 0, 4, 3, 4, 2, 4, 1, 4, 4, 0, 5, 4, 5, 3, 5, 2, 5, 1, $$$$5, 5, 0, 6, 5, 6, 4, 6, 3, 6, 2, 6, 1, 6, 6, 0, 7, 6, 7, 5, 7, 4, 7, 3, 7, 2, 7, 1, 7, 7, 0, 8, 7, $$$$ 8, 6, 8, 5, 8, 4, 8, 3, 8, 2, 8, 1, 8, 8, 0, 9, 8, 9, 7, 9, 6, 9, 5, 9, 4, 9, 3, 9, 2, 9, 1, 9, 9)$$
\end{enumerate}
\end{myex}

The cycle of a periodic universal Fibonacci sequence can be interpreted as a Hamiltonian cycle in the Cayley table. In this way, we illustrate the cycles of the universal Fibonacci sequences of the groupoids $UFSP_n$ for small $n$.

\begin{myex} 

An explicit description of the universal Fibonacci sequences in $UFSP_n$ for $n = 2, 3, 4$ as Hamiltonian cycles in the Cayley table.

\begin{enumerate}
    \item $n = 2:$

    \begin{figure}[H]
    \centering
        \begin{tabular}{M|M|M|M|M|M|MX}
         &$0$           &$1$ \\ \cline{1-3}
    $0$&\tikzmark{a}{1}&\tikzmark{b}{1} \\[2em] \cline{1-3}
    $1$&\tikzmark{d}{0}&\tikzmark{c}{0}\\[2em] \cline{1-3}
\end{tabular}

\link{a}{b}
\link{b}{c} 
\link{c}{d}
\link{d}{a}

    \end{figure}

    \item $n = 3:$

    \begin{figure}[H]
    \centering
        \begin{tabular}{M|M|M|M|M|M|MX}
         &$0$           &$1$           & $2$   \\ \cline{1-4}
    $0$&\tikzmark{a}{1}&\tikzmark{b}{1} &\tikzmark{e}{1} \\[2em] \cline{1-4}
    $1$&\tikzmark{d}{2}&\tikzmark{c}{0}&\tikzmark{g}{2}\\[2em] \cline{1-4}
    $2$&\tikzmark{z}{0}& \tikzmark{f}{2}&\tikzmark{h}{0}\\[2em] \cline{1-4}
\end{tabular}

\link{a}{b}
\link{b}{c} 
\link{c}{d}
\link{d}{e}
\link{e}{f}
\link{f}{g}
\link{g}{h}
\link{h}{z}
\link{z}{a}

    \end{figure}

    \item $n = 4:$

    \begin{figure}[H]
    \centering
        \begin{tabular}{M|M|M|M|M|M|MX}
         &$0$           &$1$           & $2$    &$3$\\ \cline{1-5}
    $0$&\tikzmark{a}{1}&\tikzmark{b}{1} &\tikzmark{e}{1}&\tikzmark{k}{2}\\[2em] \cline{1-5}
    $1$&\tikzmark{d}{2}&\tikzmark{c}{0}&\tikzmark{g}{2}&\tikzmark{p}{3}\\[2em] \cline{1-5}
    $2$&\tikzmark{aa}{3}& \tikzmark{f}{2}&\tikzmark{h}{0}&\tikzmark{l}{1}\\[2em] \cline{1-5}
    $3$&\tikzmark{s}{0}&\tikzmark{o}{3}&\tikzmark{m}{3}&\tikzmark{r}{0} \\[2em] \cline{1-5}
\end{tabular}

\link{a}{b}
\link{b}{c} 
\link{c}{d}
\link{d}{e}
\link{e}{f}
\link{f}{g}
\link{g}{h}
\link{h}{aa}
\link{aa}{k}
\link{k}{m}
\link{m}{l}
\link{l}{o}
\link{o}{p}
\link{p}{r}
\link{r}{s}
\link{s}{a}

    \end{figure}
\end{enumerate}

\end{myex}

\subsubsection{Singly infinite strictly preperiodic universal Fibonacci sequences}

Theorem \ref{preperiodic} provides the following method for constructing UFS-groupoids with a singly infinite strictly preperiodic universal Fibonacci sequence, by means of which any such UFS-groupoid can be constructed.

\begin{enumerate}
\item Choose $n > 1$.
\item Choose some de Bruijn sequence $(u_1, \ldots, u_{n^2})$ of type $B(n, 2)$ over the alphabet $\{0, \ldots, n-1\}$.
\item Choose some pair of equal elements $u_l = u_k$, $l < k$. By cyclically shifting the de Bruijn sequence, we can assume that $k = n^2$.
\item Then the singly infinite strictly preperiodic sequence $\widehat{u}'$ with preperiodic part
$$(u_l, u_1, u_2, \ldots, u_{l-1})$$
of length $l$ and periodic part
$$(u_{n^2}, u_{l+1}, \ldots, u_{n^2-1})$$
with period $n^2 - l$ will be a universal Fibonacci sequence of some groupoid.
\item The groupoid itself can be recovered from $\widehat{u}'$ using Theorem \ref{uniqueinv}.
\end{enumerate}

Thus, finding a UFS-groupoid with a singly infinite strictly preperiodic universal Fibonacci sequence with preperiodic part of length $l$ reduces to finding a de Bruijn sequence of type $B(n, 2)$ in which the same element occurs at two different positions with a difference between occurrences of $n^2 - l$.

Moreover, the key theorem on the construction of such UFS-groupoids by extending the old operation holds.

\begin{mythm}\label{finstrsub}
    Let a UFS-groupoid $(G, *)$ on $G = \{0, \ldots, n - 1\}$ with universal Fibonacci sequence $\widehat{u} = (l, k, \ldots)$ be given.

    Then the groupoid $(G', *')$ with $G' = G \cup \{n\}$ defined by

   \begin{align*}
    	a *' b =  \begin{cases}
a * b, \text{ if } a, b < n ;\\
k, \text{ if } (a, b) = (n, l); \\
n, \text{ if } a = n\text{ and } b \ne l, b \ne n; \\
(a + 1) \bmod{n}, \text{ if } a \ne  n, a \ne l \text{ and }b = n; \\
(l + 1) \bmod {n}, \text{ if } (a, b) = (n, n); \\
n, \text{ if } (a, b) = (l, n),
 \end{cases}  
 \end{align*}

    is a UFS-groupoid with a singly infinite strictly preperiodic universal Fibonacci sequence $$\widehat{u}' = (l, n, n, \underline{(l + 1) \bmod{n}, n, (l + 2) \bmod{n}, n, (l + 3) \bmod{n}, \ldots, (l - 1) \bmod{n}, n}, \widehat{u}),$$  where the emphasized fragment is empty when $n = 1$ and of length $2$ when $n = 2$, and $(G, *)$ is a proper subgroupoid of $(G', *')$.
\end{mythm}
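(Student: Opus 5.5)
The plan is to exhibit $\widehat{u}'$ explicitly and verify the two conditions of Theorem \ref{uniqueinv}. First a remark on the hypothesis. Since $G$ is finite, Theorem \ref{periodic} says $\widehat{u}$ is either periodic or singly infinite strictly preperiodic. In the periodic case, Theorem \ref{unique} lets us take $\widehat{u}$ to be the singly infinite representative beginning with the pair $(l, k)$. So in all cases $\widehat{u}$ is a singly infinite sequence with first two terms $l, k$. Hence $l * k$-type information is available: the suffix starting at position $1$ is $Fib(l, k)$, and $\widehat{u}$ is a UFS of $(G, *)$.

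\textbf{Step 1: pair count.} I would list the length-$2$ substrings of $\widehat{u}'$ lying in the prefix before $\widehat{u}$, together with the junction pair. These are
\[
(l, n),\ (n, n),\ (n, l+1),\ (l+1, n),\ (n, l+2),\ \ldots,\ (l-1, n),\ (n, l),
\]
with all entries taken $\bmod n$. Then I check the following.
\begin{itemize}
\item The pairs $(a, n)$ with $a \in G$ appear exactly once each: as $(l, n)$, and as $(l+j, n)$ for $j = 1, \ldots, n-1$.
\item The pairs $(n, b)$ with $b \in G$ appear exactly once each: as $(n, l+j)$ for $j = 1, \ldots, n-1$, and as the junction pair $(n, l)$.
\item $(n, n)$ appears once.
\end{itemize}
This gives $2n + 1 = (n+1)^2 - n^2$ distinct pairs, exactly the pairs of $G'^2 \setminus G^2$. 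Every substring of $\widehat{u}$ lies in $G^2$, and by Lemma \ref{twoel} every pair of $G^2$ occurs in $\widehat{u}$. Therefore every pair of $G'^2$ occurs in $\widehat{u}'$. Moreover, each new pair occurs exactly once, and pairs of $G^2$ occur only inside the suffix $\widehat{u}$.

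\textbf{Step 2: suffix coincidence and the operation.} For pairs in $G^2$, the suffixes of $\widehat{u}'$ starting with them are suffixes of $\widehat{u}$, so they coincide by Theorem \ref{uniqueinv} applied to $\widehat{u}$. New pairs occur once, so there is nothing to check for them. Hence, by Theorem \ref{uniqueinv}, $\widehat{u}'$ is a UFS of the groupoid it defines.

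It remains to read off the successor of each new pair and compare with the definition of $*'$.
\begin{itemize}
\item $(l, n) \mapsto n$.
\item $(n, n) \mapsto l+1$.
\item $(n, b) \mapsto n$ for $b \neq l, n$.
\item $(a, n) \mapsto a+1$ for $a \neq l, n$.
\item $(n, l) \mapsto k$, because $\widehat{u}$ starts with $l, k$.
\end{itemize}
On $G^2$ the successors are those in $\widehat{u}$, i.e.\ $*$. So the defined groupoid is exactly $(G', *')$. The same checks show that $G$ is closed under $*'$, with $*'|_G = *$, and $n \notin G$; hence $(G, *)$ is a proper subgroupoid.

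\textbf{Step 3: periodicity type.} $\widehat{u}'$ is singly infinite. It has the suffix $\widehat{u}$, which is periodic or strictly preperiodic, so $\widehat{u}'$ has a periodic suffix. It is not periodic, because in a periodic sequence every occurring pair recurs, whereas $(l, n)$ occurs only once. Hence $\widehat{u}'$ is strictly preperiodic.

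\textbf{Main obstacle.} I expect the main obstacle to be the bookkeeping modulo $n$ in Step 1 and the degenerate cases.
\begin{itemize}
\item For $n = 1$ we have $l = k = 0$, the emphasized fragment is empty, and $(l+1) \bmod 1 = l$. I would check this case directly: $\widehat{u}' = (0, 1, 1, 0, 0, \ldots)$ with new pairs $(0,1), (1,1), (1,0)$.
\item For $n = 2$ the fragment is $(l+1, n)$, and I would check it by hand in the same way.
\end{itemize}
I also need the case distinctions in the definition of $*'$ (for example $(n, n)$ versus $(n, l)$) to stay disjoint when $l + 1 \equiv l$, which again happens only for $n = 1$.
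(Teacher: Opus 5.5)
Your proposal is correct and follows essentially the same route as the paper: you check, via Theorem~\ref{uniqueinv}, that every pair of $(G')^2$ occurs in $\widehat{u}'$, that the $2n+1$ new pairs involving $n$ each occur exactly once while pairs from $G^2$ occur only inside the suffix $\widehat{u}$, and then you read off $*'$ from the successors of each pair. The only difference is the last step: you obtain strict preperiodicity directly (the pair $(l,n)$ occurs only once, while the suffix $\widehat{u}$ provides a periodic tail), whereas the paper deduces it from Theorems~\ref{periodic} and~\ref{nosubufs} using the proper subgroupoid $(G,*)$; both arguments are valid.
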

\begin{proof}
    Consider all possible pairs $(n, t)$ and $(t, n)$. If $t \ne l$ and $t \ne n$, in $\widehat{u}'$ these pairs occur once in the fragment $\ldots, n, t, n, \ldots$. For $t = l$ and $t = n$, the substrings $(l, n), (n, n), (n, l)$ also occur exactly once. Hence, $\widehat{u}'$ contains all pairs from $(G')^2$ and satisfies the property that suffixes beginning with the same pair coincide, since all new pairs occur once, and for old pairs the coincidence of suffixes and containing all old pairs follows from the fact that $\widehat{u}$ is a universal Fibonacci sequence. Therefore, by Theorem \ref{uniqueinv} and by direct verification that the formula for $*'$ correctly recovers the operation from the sequence, $(G', *')$ is a UFS-groupoid, and by definition it contains $(G, *)$. Moreover, $\widehat{u}'$ is singly infinite strictly preperiodic by Theorems \ref{periodic} and \ref{nosubufs}, since $(G', *')$ is finite and contains a proper subgroupoid.
\end{proof}

\begin{mycor}\label{finstrcepocka}
    For any finite UFS-groupoid $(G, *)$ and any natural number $n > 1$, there exists a UFS-groupoid $(G_n, *)$ such that $(G, *)$ is a subgroupoid of $(G_n, *)$ and for $(G_n, *)$ there is a nested chain of subgroupoids 
    $$(G_1, *) \subset (G_2, *) \subset \ldots \subset (G_n, *),$$ 
    where all $(G_i, *)$ for $i > 1$ have a singly infinite strictly preperiodic universal Fibonacci sequence, and $(G_{i + 1}, *)$ is obtained from $(G_i, *)$ by extending the operation by one element.
\end{mycor}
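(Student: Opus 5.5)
The plan is to iterate Theorem \ref{finstrsub}, starting from $(G,*)$ itself. Set $(G_1,*)=(G,*)$ and relabel its underlying set as $\{0,\ldots,m-1\}$ with $m=|G|$. This relabeling is harmless, since UFS-groupoid structure is transported along bijections, and so is the universal Fibonacci sequence, by Theorem \ref{homomorph} applied to an isomorphism. By Theorem \ref{periodic}, the universal Fibonacci sequence $\widehat{u}_1$ of $(G_1,*)$ is either periodic or singly infinite strictly preperiodic. In the periodic case we take a singly infinite representative, which Theorem \ref{unique} allows. Either way it has the form $(l,k,\ldots)$, so the hypothesis of Theorem \ref{finstrsub} is met.

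Inductively, suppose $(G_i,*)$ has been built on $\{0,\ldots,m+i-2\}$ with a singly infinite universal Fibonacci sequence $\widehat{u}_i=(l_i,k_i,\ldots)$. Applying Theorem \ref{finstrsub} with $n=m+i-1$ produces $(G_{i+1},*)$ on $\{0,\ldots,m+i-1\}$. Its operation agrees with that of $G_i$ on old pairs, so it is an extension by exactly one element. By the theorem, $G_i$ is a proper subgroupoid of $G_{i+1}$, and the universal Fibonacci sequence $\widehat{u}_{i+1}=(l_i,n,n,\ldots,\widehat{u}_i)$ is singly infinite strictly preperiodic. Because this new sequence is again singly infinite, starting with $(l_i,n)$, the induction continues. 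After $n-1$ steps we obtain the chain $(G_1,*)\subset\cdots\subset(G_n,*)$, where every $G_i$ with $i>1$ has a singly infinite strictly preperiodic universal Fibonacci sequence. Inclusion is transitive along the chain, so $(G,*)=(G_1,*)$ is a subgroupoid of $(G_n,*)$.

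The main point needing care is the base case. Theorem \ref{finstrsub} is phrased for a sequence written as $(l,k,\ldots)$, which must be read as a singly infinite sequence. This is exactly where we need Theorem \ref{periodic}, which rules out doubly infinite non-periodic sequences for finite groupoids, together with the freedom to choose a singly infinite shift in the periodic case.

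The small cases $m=1$ and $m=2$ are covered by the remark in Theorem \ref{finstrsub} about the emphasized fragment. They also need a quick check that $(l+1)\bmod n$ is read correctly when $n$ is small; I would verify this against the listed pairs $(n,t)$ and $(t,n)$ in that theorem's proof.

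Beyond that, only bookkeeping remains: confirm that each "extension by one element" preserves the restriction of the operation, which is the first clause of $*'$.
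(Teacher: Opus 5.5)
Your proposal is correct and is essentially the paper's proof, which just iterates the construction of Theorem \ref{finstrsub} starting from $(G,*)$. Your extra care in the base case fills in details the paper leaves implicit: you relabel $G$ as $\{0,\ldots,m-1\}$, and when the universal Fibonacci sequence is periodic you choose a singly infinite representative via Theorems \ref{periodic} and \ref{unique} so that it has the form $(l,k,\ldots)$ (one small fix: don't reuse $n$ for both the chain length and the parameter of Theorem \ref{finstrsub}).
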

\begin{proof}
    It suffices to iteratively apply the construction of a new groupoid from Theorem \ref{finstrsub} starting with $(G, *)$.
\end{proof}

Thus, one can construct arbitrarily long nested chains of UFS-groupoids with a strictly preperiodic universal Fibonacci sequence, starting from any finite UFS-groupoid.

\begin{mycor}\label{finidem}
    For any $n \ge 1$ there exists a UFS-groupoid of cardinality $n$ with an idempotent element.

\end{mycor}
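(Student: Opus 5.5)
The plan is to build the required groupoid by starting from the one-element UFS-groupoid $UFSP_1$ of Proposition \ref{onelUFS} and applying the one-element extension of Theorem \ref{finstrsub} repeatedly, exactly as in Corollary \ref{finstrcepocka}. For $n = 1$ there is nothing to do: $UFSP_1 = (\{0\}, *_1)$ with $0 *_1 0 = 0$ already has the idempotent element $0$.

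For $n > 1$, I will argue by induction on the cardinality. The claim to carry along is stronger than the statement: for every $m \ge 1$ there is a UFS-groupoid on $\{0, \ldots, m-1\}$ that contains $\{0\}$ as a subgroupoid. The base case is $UFSP_1$. For the step, suppose $(G, *)$ on $\{0,\ldots,m-1\}$ is a UFS-groupoid with universal Fibonacci sequence $\widehat{u} = (l, k, \ldots)$ and with $\{0\}$ as a subgroupoid. In the periodic case we use a singly infinite representative, which Theorem \ref{unique} allows. Theorem \ref{finstrsub} then produces a UFS-groupoid $(G', *')$ on $\{0, \ldots, m\}$ in which $*'$ agrees with $*$ on all pairs with $a, b < m$. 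In particular $0 *' 0 = 0 * 0 = 0$, so the idempotent survives and $\{0\}$ remains a subgroupoid. After $n-1$ steps we reach cardinality $n$.

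The only point that needs care is that Theorem \ref{finstrsub} can be applied at each stage, in particular to the one-element groupoid at the start. There $\widehat{u} = (0, 0, \ldots)$, so $l = k = 0$, and the theorem explicitly covers this case: the emphasized fragment is empty and $\widehat{u}' = (0, 1, 1, \widehat{u})$. I would check this small case directly against Theorem \ref{norepeat}. The pairs that occur are $(0,1)$, $(1,1)$, $(1,0)$ and $(0,0)$, each exactly once, so $\widehat{u}'$ is a valid universal Fibonacci sequence on $\{0,1\}$. At all later stages the groupoid is finite, so by Theorem \ref{periodic} its universal Fibonacci sequence is singly infinite with a first pair $(l, k)$, and the hypotheses of Theorem \ref{finstrsub} hold.

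As a consistency check, Theorem \ref{idem} requires that for $n > 1$ the universal Fibonacci sequence be strictly preperiodic with periodic part $(0)$. This agrees with Theorem \ref{finstrsub}, which says the resulting sequences are singly infinite strictly preperiodic and end with the suffix $\widehat{u}$, and hence ultimately with the constant tail $(0, 0, \ldots)$. I expect no real obstacle here. The only mild subtlety is the degenerate base case $n = 1$ in the formula of Theorem \ref{finstrsub}, which the explicit verification above handles.
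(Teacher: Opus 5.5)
Your proposal is correct and matches the paper's proof, which is exactly to iterate the one-element extension of Theorem~\ref{finstrsub} starting from $UFSP_1$; your remark that the idempotent $0$ survives because $*'$ agrees with $*$ on old pairs is the detail the paper leaves implicit. One small imprecision: in $(0,1,1,0,0,\ldots)$ the pair $(0,0)$ recurs throughout the constant tail, so your base-case check should be read in the sense of Theorem~\ref{norepeat}(2), with $(0,0)$ counted once as the pair of the periodic part $(0)$.
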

\begin{proof}
   It suffices to iteratively apply the construction of a new groupoid from Theorem \ref{finstrsub} starting with $UFSP_1$.
\end{proof}

\begin{myex}\label{exid}
   Let us construct an example of a UFS-groupoid with an idempotent element following Corollary \ref{finidem}. Applying Theorem \ref{finstrsub} once, we obtain a UFS-groupoid with the multiplication table:
 \begin{figure}[H]
    \centering
    \begin{tabular}{M|M|MX}
&$0$&$1$\\ \cline{1-3}
$0$&$0$ & $1$\\[2em] \cline{1-3}
$1$&$0$ & $0$\\[2em] \cline{1-3}
    \end{tabular}
\end{figure}
\end{myex}

\subsection{Infinite UFS-groupoids}

In this section, we construct examples of countable UFS-groupoids. For the constructions, we will use iterative extension of a fragment of the universal Fibonacci sequence. This fragment can have the form $(u_1, \ldots, u_n, ?)$ for singly infinite universal Fibonacci sequences and $(?, u_{-m}, \ldots, u_1, \ldots, u_n, ?)$ for doubly infinite universal Fibonacci sequences. The extension is carried out in such a way that sooner or later all possible pairs are covered as substrings, ensuring that the conditions of Theorem \ref{norepeat} are ultimately satisfied.

\subsubsection{Singly infinite aperiodic universal Fibonacci sequences}

We formulate and prove a theorem with a construction of a UFS-groupoid with a singly infinite aperiodic universal Fibonacci sequence.

\begin{mythm}\label{infone}
    We will iteratively define the operation $*$ of a certain groupoid $(\mathbb{N}, *)$ together with the gradual construction of the universal Fibonacci sequence $\widehat{u}$ for this groupoid:

\begin{enumerate}
    \item  $1 * 1 = 2$, $\widehat{u} = (1, 1, 2, ?).$
    \item Suppose we have defined the operation for some finite set of pairs $P \subset \mathbb{N}^2$ and $\widehat{u} = (1, 1, 2, \ldots, c, d, ?)$.

    Consider the minimal pair $(i, j)$ among all pairs of natural numbers not in $P$ and different from $(c, d)$, with respect to the sum of coordinates (if there are several such, choose the one with the smallest first coordinate). Then find the minimal $k$ such that the pairs $(d, k)$, $(k, i)$, $(j, k+1)$ are distinct, not in $P$, and none of them is equal to $(i, j)$ or $(c, d)$. Then define $*$ for the pairs $(c, d)$, $(d, k)$, $(k, i)$, $(i, j)$ as

    $$c * d = k,$$

$$d * k = i,$$

$$k * i = j,$$

$$i * j = k + 1.$$

and extend the fragment of the universal Fibonacci sequence to $$\widehat{u} = (1, 1, 2, \ldots, c, d, k, i, j, k + 1, ?).$$
\end{enumerate}

The constructed groupoid $(\mathbb{N}, *)$ will be a UFS-groupoid with a singly infinite aperiodic universal Fibonacci sequence $\widehat{u}$.
\end{mythm}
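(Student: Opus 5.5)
The plan is to verify the criterion of Theorem \ref{norepeat}, item 3, for the limit sequence $\widehat{u}=(u_1,u_2,\ldots)$ obtained from the construction. We must check three things. First, the construction never gets stuck and produces a well-defined singly infinite sequence. Second, the consecutive pairs $(u_t,u_{t+1})$ are pairwise distinct. Third, every pair of $\mathbb{N}^2$ eventually occurs as a consecutive pair. The converse part of Theorem \ref{norepeat} (equivalently Theorem \ref{uniqueinv}) then yields that $\widehat{u}$ is the universal Fibonacci sequence of the groupoid it defines. By construction that groupoid is exactly $(\mathbb{N},*)$, since each assignment $x*y=z$ is made precisely when $(x,y,z)$ is appended as a substring. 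Aperiodicity is automatic from distinctness of consecutive pairs, by Lemma \ref{twopairs}: no repeated pair means no periodic suffix.

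I will maintain the invariant that, at each stage, $P$ is exactly the set of consecutive pairs of the current fragment except the last pair $(c,d)$, with $(c,d)\notin P$, and that all consecutive pairs of the fragment are distinct. Initially $P=\{(1,1)\}$ and the last pair is $(1,2)$. The step appends $k,i,j,k+1$, which creates the new consecutive pairs $(d,k),(k,i),(i,j),(j,k+1)$ and moves $(c,d)$ into $P$. The selection rule guarantees the following:
\begin{itemize}
\item $(d,k)$, $(k,i)$ and $(j,k+1)$ are distinct, lie outside $P$, and differ from $(i,j)$ and $(c,d)$;
\item $(i,j)\notin P\cup\{(c,d)\}$.
\end{itemize}
Hence the five pairs $(c,d),(d,k),(k,i),(i,j),(j,k+1)$ are pairwise distinct and new, and the invariant is preserved with the new last pair $(j,k+1)$. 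Since the four pairs added to $P$ are distinct, the four definitions of $*$ are consistent, and no pair ever receives two values. Existence of $k$ follows because $P$ is finite: only finitely many $k$ make one of the three pairs lie in $P$ or equal $(i,j)$ or $(c,d)$. Mutual distinctness of the three pairs also fails for at most finitely many $k$; for instance, $(d,k)=(j,k+1)$ is impossible, and the coincidences $(d,k)=(k,i)$ or $(k,i)=(j,k+1)$ pin down $k$. So a minimal admissible $k$ exists.

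The remaining step, which I expect to be the main obstacle only in bookkeeping, is coverage: every $(x,y)\in\mathbb{N}^2$ eventually becomes a consecutive pair. The key observation is that at every step, the pair $(i,j)$ chosen is the minimal pair in the fixed well-order on $\mathbb{N}^2$ (by coordinate sum, then first coordinate) outside $P\cup\{(c,d)\}$, and it gets inserted. Fix $(x,y)$; only finitely many pairs precede it in this order, since the order has type $\omega$. As long as $(x,y)$ is not yet in the sequence, each step either inserts $(x,y)$ itself or inserts some new pair preceding $(x,y)$ as $(i,j)$. Each such predecessor can be inserted only once, so after finitely many steps $(x,y)$ must be chosen or has already appeared as the last pair $(c,d)$, which lies in the sequence. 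Thus all of $\mathbb{N}^2$ is covered, and the fragments grow by four terms each step, giving an infinite singly infinite sequence.

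Finally, Theorem \ref{norepeat}(3), or directly Theorem \ref{uniqueinv}, applies, since pairs are never repeated. Therefore $\widehat{u}$ is a universal Fibonacci sequence of $(\mathbb{N},*)$. It is singly infinite by construction and aperiodic because no pair repeats, which completes the argument.
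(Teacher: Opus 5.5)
Your proposal is correct and follows essentially the same route as the paper. Like the paper, you show that $k$ exists because $P$ is finite, that the four new assignments are made only on fresh pairs, and that the $\omega$-type order on $\mathbb{N}^2$ forces every pair to occur exactly once; you then invoke Theorem \ref{norepeat} (equivalently Theorem \ref{uniqueinv}). Your explicit invariant ($P$ equals the set of consecutive pairs other than the last pair $(c,d)$) and your counting argument for coverage spell out what the paper states more briefly.
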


\begin{proof}
    By Theorem \ref{norepeat}, it suffices to prove that every pair of natural numbers occurs in $\widehat{u}$ as a substring exactly once, and that the constructed operation is obtained from $\widehat{u}$ using Theorem \ref{uniqueinv}.

    For step 1. we correctly defined the operation on $(1, 1)$ and started the fragment $\widehat{u}$.

    Now, suppose the operation is correctly defined on some set of pairs $P \subset \mathbb{N}^2$ and the fragment $\widehat{u} = (1, 1, 2, \ldots, c, d, ?)$ has been constructed.

    We can uniquely determine the minimal $(i, j)$ different from $(c, d)$. Next, we can find $k$ for which the pairs $(d, k)$, $(k, i)$, $(j, k+1)$ are distinct, not in $P$, and none of them equals $(i, j)$ or $(c, d)$. Indeed, for the pairs $(d, k)$, $(k, i)$, $(j, k+1)$ to be distinct, it suffices to take $k > \max(d, j)$. To ensure that none of these pairs belongs to $P$, it suffices to take $k$ larger than the maximum number appearing in any pair of $P$, which is defined because $P$ is finite. Finally, for all these pairs to be different from $(i, j)$ and $(c, d)$, it suffices to take $k > \max(d, j, i, c)$. Thus, such $k$ exist, and hence we can find the minimal $k$ satisfying these conditions.

    Because all these pairs are distinct, we define the operation exclusively on new pairs. From the construction, it is clear that we have correctly extended the fragment $\widehat{u}$.

    The choice of the pair minimal with respect to the sum of coordinates, and in case of equal sums by the first coordinate, defines a linear order on $\mathbb{N}^2$. Moreover, at each iteration we define the operation on the minimal pair with respect to this linear order. Therefore, sooner or later, we will define the operation on all pairs in $\mathbb{N}^2$. Moreover, all added pairs do not occur among the already added pairs in $P$, so $\widehat{u}$ will contain every pair of elements exactly once as a substring. Hence, by Theorem \ref{norepeat}, we obtain that $\widehat{u}$ is aperiodic and is a universal Fibonacci sequence for this groupoid.
\end{proof}

\subsubsection{Doubly infinite aperiodic universal Fibonacci sequences}

We state and prove a theorem with a construction of a UFS-groupoid having a doubly infinite aperiodic universal Fibonacci sequence. By Theorem \ref{onesubfs}, such groupoids possess at most one subgroupoid, which is a UFS-groupoid with a singly infinite aperiodic universal Fibonacci sequence. We explicitly give a construction that, for any UFS-groupoid with a singly infinite aperiodic universal Fibonacci sequence, builds a UFS-groupoid with a doubly infinite aperiodic universal Fibonacci sequence containing the given one as a subgroupoid, this proves their existence in view of Theorem \ref{infone}.

\begin{mythm}\label{inftwo}

Up to relabeling, any countable groupoid can be regarded as defined on the set of natural numbers, thus, let an arbitrary UFS-groupoid $(\mathbb{N}, *)$ be given with a singly infinite aperiodic universal Fibonacci sequence $(u_1, u_2, \ldots)$.
 We define iteratively the operation $*$ of the groupoid $(\mathbb{Z}, *)$ together with the gradual extension of the fragment of the sequence $\widehat{u}$:
 
\begin{enumerate}
\item Initially, we define the operation on pairs of natural numbers as in $(\mathbb{N}, *)$, and take as $\widehat{u}$ the universal Fibonacci sequence for it, appending a question mark on the left: $\widehat{u} = (?, u_1, u_2, \ldots)$.
\item Suppose we have defined the operation for some finite set of pairs $P$, where one of the coordinates is non-positive, and $\widehat{u} = (?, c, d, \ldots, u_1, u_2,\ldots)$.

Then, among all pairs of integers on which the operation has not yet been defined, we find the minimal $(i, j)$ with respect to the sum of the absolute values of the coordinates (if there are several such, we choose the one with the smallest first coordinate, and if there are still several, the one with the smallest second coordinate). After that, we find the largest negative $k$ such that the pairs $(k, c)$, $(j, k)$ are distinct, not in $P$, and neither of them equals $(i, j)$. Then we define $*$ for the pairs $(k, c)$, $(j, k)$, $(i, j)$ as

$$k * c = d,$$

$$j * k = c,$$

$$i * j = k.$$

and extend the fragment of the universal Fibonacci sequence to $$\widehat{u} = (?, i, j, k, c, d, \ldots, u_1, u_2, \ldots).$$
\end{enumerate}

The constructed groupoid $(\mathbb{Z}, *)$ will be a UFS-groupoid with a doubly infinite aperiodic universal Fibonacci sequence $\widehat{u}$ and will contain the original $(\mathbb{N}, *)$ as subgroupoid.
\end{mythm}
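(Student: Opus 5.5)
The plan is to mirror the proof of Theorem \ref{infone}. By Theorem \ref{norepeat}(3) it suffices to show three things about the limit sequence $\widehat{u}$, which is doubly infinite once every step has prepended three new terms. First, every pair in $\mathbb{Z}^2$ occurs as a substring. Second, every pair occurs exactly once. Third, the operation built step by step is precisely the one that Theorem \ref{uniqueinv} reads off from $\widehat{u}$.

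I will keep an invariant at each stage: the fragment $(?, c, d, \ldots, u_1, u_2, \ldots)$ is a left-extension of $(u_1,u_2,\ldots)$; its consecutive pairs are exactly the pairs in $P \cup \mathbb{N}^2$ on which $*$ is defined; each such pair occurs exactly once; and for each consecutive triple $(x,y,z)$ we have $x*y=z$. The base case holds because $(u_1,u_2,\ldots)$ is a UFS of $(\mathbb{N},*)$ and, being singly infinite aperiodic, contains each pair of $\mathbb{N}^2$ exactly once by Theorem \ref{norepeat}.

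For the inductive step, I first check that the required $k$ exists. Since $P$ is finite, any negative $k$ with $|k|$ larger than every coordinate appearing in $P$, and also larger than $|i|,|j|,|c|$, makes $(k,c)$ and $(j,k)$ new. These pairs are also not in $\mathbb{N}^2$ because $k\le 0$. They are distinct from each other and from $(i,j)$, except possibly in degenerate coincidences, which a large $|k|$ avoids. So a largest admissible negative $k$ exists.

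Next, the pair $(i,j)$ was chosen among undefined pairs, so it is new as well. Hence the prepended triple $i,j,k$ creates exactly the three new consecutive pairs $(i,j),(j,k),(k,c)$. The newly defined products $i*j=k$, $j*k=c$, $k*c=d$ match the new triples, and all old pairs and triples are untouched. This preserves the invariant.

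The step I expect to be the main obstacle is \emph{exhaustiveness}: every pair of $\mathbb{Z}^2$ must eventually be placed. I would argue exactly as in Theorem \ref{infone}. The ordering by $|i|+|j|$, then first coordinate, then second, is a well-order of type $\omega$ on $\mathbb{Z}^2$, since each level set is finite. At every step the least undefined pair receives a definition, and pairs defined incidentally only shrink the remaining set. So after finitely many steps any given pair is defined, and hence it occurs in $\widehat{u}$.

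Combining this with the uniqueness part of the invariant, each pair of $\mathbb{Z}^2$ occurs exactly once in the doubly infinite $\widehat{u}$. Theorem \ref{norepeat}(3) then gives that $\widehat{u}$ is an aperiodic UFS of $(\mathbb{Z},*)$, with aperiodicity also following because no pair repeats (Lemma \ref{twopairs}). Finally, $*$ restricted to $\mathbb{N}$ was never altered, so $\mathbb{N}$ is closed under $*$ and $(\mathbb{N},*)$ is a subgroupoid. This is consistent with Theorem \ref{onesubfs}.
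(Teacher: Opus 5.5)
Your proof is correct and follows the paper's argument. Both show that a largest admissible negative $k$ exists because $P$ is finite, that the three prepended pairs $(i,j),(j,k),(k,c)$ are new, that every pair is eventually placed because the $(|i|+|j|,\,i,\,j)$ ordering of $\mathbb{Z}^2$ has type $\omega$, and then conclude via Theorem \ref{norepeat}(3). Your explicit invariant and your base case via Theorem \ref{norepeat} (rather than the paper's appeal to Theorem \ref{infone}) make it slightly more careful; one small fix: the bound on $k$ should require $|k|$ to exceed the \emph{absolute values} of all coordinates in $P$ (equivalently, $k$ below their minimum), since coordinates in $P$ can be negative.
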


\begin{proof}

     At step 1. everything is correct by Theorem \ref{infone}.

Now, suppose the operation has been defined on some set of pairs $P$.

     Then we can uniquely determine the minimal $(i, j)$. Next, we can find a negative $k$ such that the pairs $(k, c)$, $(j, k)$ are distinct, not in $P$, and neither of them equals $(i, j)$. Indeed, for this it suffices to take $k < \min(c, i, j)$ on the one hand, and $k$ less than the minimum of coordinates occurring in $P$ on the other hand. Thus, we can correctly determine the largest such $k$.

     Because all these pairs are distinct, we define the operation exclusively on new pairs. From the construction, it is clear that we have correctly extended the fragment of the universal Fibonacci sequence.

The described method of determining the minimal pair $(i, j)$ defines a linear order on all pairs of integers where at least one coordinate is non-positive. Therefore, in the course of the iterative procedure, we will eventually define the operation on all pairs. Moreover, the constructed sequence $\widehat{u}$ will contain every pair of elements exactly once as a substring; hence, by Theorem \ref{norepeat}, we obtain that we have defined a UFS-groupoid with a doubly infinite aperiodic universal Fibonacci sequence $\widehat{u}$ containing the subgroupoid $(\mathbb{N}, *)$.
\end{proof}

\subsubsection{Doubly infinite strictly preperiodic universal Fibonacci sequences}

We formulate and prove a theorem with a construction of a UFS-groupoid with a doubly infinite strictly preperiodic universal Fibonacci sequence with arbitrary period $n \ge 1$.

\begin{mythm}\label{inftwoprep}

 We define iteratively the operation $*$ of the groupoid $(\mathbb{N}, *)$ together with the gradual extension of the fragment of the sequence $\widehat{u}$:
 
\begin{enumerate}
\item Initially, define $1 * 2 = 3, \ldots, (n-1) * n = 1, n * 1 = 2$ and $\widehat{u} = (?,1,$ $\ldots, n, 1, \ldots, n, \ldots)$. The sequence $\widehat{u}$ ends with a periodic suffix with cycle $(1, 2, \ldots, n)$.
\item Suppose we have defined the operation for some finite set of pairs $P$ and $\widehat{u} = (?, c, d, \ldots)$.

Then, among all pairs of natural numbers on which the operation has not yet been defined, we find the minimal $(i, j)$ with respect to the sum of coordinates (if there are several such, we choose the one with the smallest first coordinate). After that, we find the minimal $k$ such that the pairs $(k, c)$, $(j, k)$ are distinct, not in $P$, and neither of them equals $(i, j)$. Then we define $*$ for the pairs $(k, c)$, $(j, k)$, $(i, j)$ as

$$k * c = d,$$

$$j * k = c,$$

$$i * j = k,$$

and extend the fragment of the universal Fibonacci sequence to  $$\widehat{u} = (?, i, j, k, c, d, \ldots).$$
\end{enumerate}

The constructed groupoid $(\mathbb{N}, *)$ will be a UFS-groupoid with a doubly infinite strictly preperiodic universal Fibonacci sequence $\widehat{u}$ with period $n$.

\end{mythm}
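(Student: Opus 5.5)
The plan follows the proofs of Theorems \ref{infone} and \ref{inftwo}. By Theorem \ref{norepeat}, case 2, it suffices to show three things about the doubly infinite sequence $\widehat{u}$ produced in the limit. First, every pair in $\mathbb{N}^2$ occurs as a substring. Second, the pairs in the preperiodic part, the junction pair, and the pairs of the cycle $(1, 2, \ldots, n)$, taken cyclically, are pairwise distinct. Third, the operation defined during the construction is the one recovered from $\widehat{u}$ as in Theorem \ref{uniqueinv}. Non-periodicity will follow from the fact that the leftward extension never stops and never re-enters the cycle.

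\emph{Initial step.} The pairs $(1,2), \ldots, (n-1,n), (n,1)$ are distinct, and they are precisely the substrings of the periodic suffix. The definitions $1*2=3, \ldots, (n-1)*n=1, n*1=2$ reproduce the cycle, with indices read modulo $n$. (For $n=1$ the cycle is $(1)$ with $1*1=1$; for $n=2$ it is $1*2=1$, $2*1=2$.) I will also insist that $P$ always equals the set of pairs occurring as substrings of the current fragment, apart from its leftmost pair $(c,d)$. That leftmost pair has its product already fixed as the next element. So I will treat $(c,d)$ as belonging to $P$ from the moment it is written; this matters for the first step, where the leftmost pair is $(1,2)$.

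\emph{Inductive step.} I first check that a valid $k$ exists. Any $k$ larger than every coordinate appearing in $P$ and larger than $c,i,j$ works: then $(k,c)$ and $(j,k)$ are new and differ from $(i,j)$, and they differ from each other unless $k=c=j$, which this choice excludes. Since $P$ is finite, the minimal such $k$ exists. The three pairs $(i,j),(j,k),(k,c)$ are new and distinct, so prepending $i,j,k$ assigns each of them a product consistent with the fragment: $i*j=k$, $j*k=c$, $k*c=d$. No previously defined value changes, and the new leftmost pair is $(i,j)$. Hence each pair of natural numbers appears at most once in the preperiodic part and junction, and never coincides with a cycle pair.

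\emph{Exhaustion.} The order by coordinate sum, then by first coordinate, is a well-order of type $\omega$ on $\mathbb{N}^2$. Every step consumes the least undefined pair, so after finitely many steps every pair below any given pair has been defined, and every pair is eventually covered. Every pair is newly added at some step, so the process never terminates and $\widehat{u}$ is genuinely doubly infinite.

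\emph{Main obstacle.} I expect the hardest part to be the bookkeeping that $(c,d)$, the current leftmost pair, is never chosen again as $(i,j)$ or as one of $(k,c),(j,k)$. I would handle it by the invariant above, which puts $(c,d)$ into $P$ as soon as it is written. It also remains to confirm that the period is exactly $n$ and that the sequence is not periodic. The period is $n$ because the cycle pairs are distinct, so by Lemma \ref{twopairs} no smaller period is possible. The sequence is not periodic because infinitely many pairs occur only once. Strict preperiodicity then follows from Theorem \ref{norepeat}, and $(\mathbb{N},*)$ is a UFS-groupoid.
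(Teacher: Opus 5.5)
Your proposal is correct and follows essentially the same route as the paper: a valid $k$ exists because $P$ is finite, each step defines $*$ only on three fresh, pairwise distinct pairs, the sum-then-first-coordinate order guarantees every pair is eventually covered, and Theorem \ref{norepeat} finishes the argument. The obstacle you flag about the leftmost pair $(c,d)$ does not actually arise here, because, unlike in Theorem \ref{infone}, its product is always already fixed as the next term, so $(c,d)\in P$ holds automatically without any extra convention.
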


\begin{proof}

     Now, suppose the operation has been defined on some set of pairs $P$.

     Then we can uniquely determine the minimal $(i, j)$. Next, we can find a $k$ such that the pairs $(k, c)$, $(j, k)$ are distinct, not in $P$, and neither of them equals $(i, j)$. Indeed, for this it suffices to take $k > \min(c, i, j)$ on the one hand, and $k$ greater than the maximum of coordinates occurring in $P$ on the other hand. Thus, we can correctly determine the minimal such $k$.
     
     Because all these pairs are distinct, we define the operation exclusively on new pairs. From the construction, it is clear that we have correctly extended the fragment of the universal Fibonacci sequence.

The described method of determining the minimal pair $(i, j)$ defines a linear order on all pairs of natural numbers. Therefore, in the course of the iterative procedure, we will eventually define the operation on all pairs. Moreover, the constructed sequence $\widehat{u}$ will contain every pair of elements exactly once as a substring, except for the pairs that are substrings of the periodic part; hence, by Theorem \ref{norepeat}, we obtain that we have defined a UFS-groupoid with a doubly infinite strictly preperiodic universal Fibonacci sequence $\widehat{u}$.
\end{proof}

Together with this construction, one can build UFS-groupoids with a doubly infinite strictly preperiodic universal Fibonacci sequence by using chains of nested finite subgroupoids.

\begin{mythm}\label{strinf}
    Take an arbitrary finite UFS-groupoid $(G, *)$. Apply iteratively the construction of a new groupoid from Theorem \ref{finstrsub} infinitely many times. In this way we define an operation $*$ on an infinite set $G'$. Then $(G', *)$ is a UFS-groupoid with a doubly infinite strictly preperiodic universal Fibonacci sequence, and there exists an infinite chain of nested subgroupoids
    $$(G, *) \subset (G_2, *) \subset \ldots \subset (G', *).$$
\end{mythm}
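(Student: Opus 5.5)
We build the union operation and its universal Fibonacci sequence in parallel, as a limit of the finite steps. Set $(G_1,*)=(G,*)$ with universal Fibonacci sequence $\widehat{u}_1$. Apply Theorem \ref{finstrsub} repeatedly to obtain $(G_{m+1},*)$ on $G_{m+1}=G_m\cup\{\text{new element}\}$, after relabelling so that $G_m=\{0,\ldots,|G_m|-1\}$. Its universal Fibonacci sequence has the form
$$\widehat{u}_{m+1}=(l_m,\,n_m,\,n_m,\,\ldots,\,\widehat{u}_m),$$
with $\widehat{u}_m$ as a proper suffix. The operation on $G_{m+1}$ agrees with that on $G_m$ for pairs from $G_m^2$. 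Hence the operation on $G'=\bigcup_m G_m$ is well defined: any pair $(a,b)\in (G')^2$ lies in some $G_m^2$, and $a*b$ is the same computed in every $G_{m'}$ with $m'\ge m$. Each $(G_m,*)$ is a subgroupoid of $(G',*)$, and the chain $G_1\subset G_2\subset\ldots$ is strict, which gives the claimed infinite chain of nested subgroupoids.

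Next we define the candidate sequence. The sequences $\widehat{u}_m$ are successively extended to the left, and each new prefix is nonempty: it contains at least $(l_m,n_m,n_m)$. So we align them at their common suffix $\widehat{u}_1$ and take $\widehat{u}$ to be the doubly infinite sequence whose suffix at the starting position of $\widehat{u}_m$ equals $\widehat{u}_m$ for every $m$.

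We then verify the two conditions of Theorem \ref{uniqueinv}.
\begin{enumerate}
\item Every pair $(a,b)\in (G')^2$ lies in some $G_m^2$, so it occurs as a substring of $\widehat{u}_m$, hence of $\widehat{u}$.
\item Take two suffixes of $\widehat{u}$ starting with the same pair. Both are suffixes of a single $\widehat{u}_m$ for $m$ large, because every position of $\widehat{u}$ lies inside some $\widehat{u}_m$. Since $\widehat{u}_m$ is a universal Fibonacci sequence, the two suffixes coincide.
\end{enumerate}
The same argument shows that the operation read off from $\widehat{u}$ is exactly $*$ on $G'$. Therefore $\widehat{u}$ is a universal Fibonacci sequence of $(G',*)$, and in particular $(G',*)$ is a UFS-groupoid.

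It remains to identify the periodicity type, and this is the step needing the most care. $G'$ is infinite, so by Theorem \ref{periodic} the sequence $\widehat{u}$ is neither periodic nor singly infinite strictly preperiodic. It is doubly infinite by construction. It is not aperiodic, since it has the suffix $\widehat{u}_2$. When $|G_1|>1$ this suffix is strictly preperiodic by Theorem \ref{finstrsub}; when $|G_1|=1$, $\widehat{u}_1$ is periodic. Either way $\widehat{u}$ has a periodic suffix, so $\widehat{u}$ is doubly infinite strictly preperiodic. As a cross-check, the infinite strict chain of subgroupoids agrees with Theorems \ref{nosubufs} and \ref{onesubfs}, which forbid such chains in the other infinite types.

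The main obstacle is bookkeeping rather than substance. The relabelling at each step must be consistent so that $G_m\subset G_{m+1}$ literally holds; the cleanest fix is to name the $m$-th new element $|G_m|$ and never rename. We must also check that no earlier pair reappears in the newly prepended prefixes, which is exactly the uniqueness-of-new-pairs argument already given in the proof of Theorem \ref{finstrsub}.
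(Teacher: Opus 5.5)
Your proposal is correct and takes the same approach as the paper: the universal Fibonacci sequences of the successive groupoids from Theorem \ref{finstrsub} are nested suffixes, their leftward union is a doubly infinite sequence sharing the common periodic tail, and this limit is the universal Fibonacci sequence of the union. Your version spells out what the paper leaves implicit: that the operation on $G'$ is well defined, that the limit sequence satisfies the two criteria of Theorem \ref{uniqueinv} (any two positions lie in a single $\widehat{u}_m$), and, via Theorem \ref{periodic}, that no other periodicity type is possible.
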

\begin{proof}
    Acting in this way, we define the operation on an infinite set $G'$. In the process we obtain larger and larger UFS-groupoids whose universal Fibonacci sequence is singly infinite and strictly preperiodic, moreover, all these sequences are nested by Theorem \ref{sub}, and therefore they share a common periodic part. Continuing this process ad infinitum, we eventually extend the sequence to a doubly infinite one, from which the statement of the theorem follows.
\end{proof}

\begin{mycor}\label{infgen}
    There exist infinitely generated UFS-groupoids.
\end{mycor}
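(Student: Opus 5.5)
The corollary asserts that infinitely generated UFS-groupoids exist. I would take the groupoid $(G', *)$ produced by Theorem \ref{strinf}, starting from any finite UFS-groupoid, say $UFSP_1$. By Theorem \ref{cycl}, a UFS-groupoid is infinitely generated if and only if it admits an infinite chain of distinct nested subgroupoids. So it suffices to show that the chain
$$(G, *) \subset (G_2, *) \subset \ldots \subset (G', *)$$
supplied by Theorem \ref{strinf} has infinitely many distinct members.

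The distinctness comes from Theorem \ref{finstrsub}. Each application of that construction adjoins exactly one new element $n$ to the current underlying set. It also keeps the old operation on old pairs, so $(G_i, *)$ is a proper subgroupoid of $(G_{i+1}, *)$. The operation on $G'$ is the union of these compatible operations. Consequently each $G_i$ is closed under $*$ in $G'$, and each $(G_i, *)$ is a subgroupoid of $(G', *)$.

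This gives a strictly increasing infinite chain $G_1 \subsetneq G_2 \subsetneq \ldots$ of subgroupoids of $(G', *)$. By Theorem \ref{strinf}, $(G', *)$ is a UFS-groupoid. Theorem \ref{cycl} then yields that $(G', *)$ is infinitely generated.

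As a sanity check, one can argue directly without Theorem \ref{cycl}. Any finite set $A \subset G'$ lies in some finite $G_i$, since each element is added at a finite stage. Hence $\langle A \rangle \subseteq G_i \ne G'$, and $G'$ is not finitely generated. This direct argument is essentially the whole proof and uses only that $G' = \bigcup_i G_i$ with each $G_i$ a proper subgroupoid.

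The only step needing care is that the limiting operation is well defined and restricts correctly to each $G_i$. This follows because Theorem \ref{finstrsub} never changes the operation on previously defined pairs. I do not expect any real obstacle.
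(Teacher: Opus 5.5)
Your proposal is correct and follows the paper's proof, which also just combines Theorem \ref{strinf} (the infinite strictly increasing chain of finite subgroupoids whose union is a UFS-groupoid $(G',*)$) with Theorem \ref{cycl}. Your direct union argument (every finite $A \subset G'$ lies in some finite, operation-closed $G_i \subsetneq G'$, so $\langle A\rangle \ne G'$) is a valid self-contained substitute for Theorem \ref{cycl}, leaving as the only real input that the limiting groupoid of Theorem \ref{strinf} is a UFS-groupoid.
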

\begin{proof}
    This follows from Theorems \ref{cycl} and \ref{strinf}.
\end{proof}

\section{Conclusions and hypotheses}

The UFS-groupoids introduced in this paper are a highly interesting type of algebraic structure. UFS-groupoids are uniquely described by their uniquely determined universal Fibonacci sequence, whose combinatorial properties provide an alternative descriptive language for the algebraic properties of the UFS-groupoid itself.

Moreover, by Theorem \ref{assoc}, no UFS-groupoid other than the trivial one-element groupoid can be even power-associative, let alone associative. This makes many classical methods of algebra inapplicable to UFS-groupoids. For finite UFS-groupoids, one can measure the extent to which they deviate from associativity, following \cite{climescu1947etudes, drapal1985sets, Kepka1992}.

\begin{mydef}
    The \textbf{associativity index} $a((G, *))$ of a finite groupoid $(G, *)$ is the number of its associative triples, that is, $$a((G, *)) = |\{(a, b, c) \in G^3| (a*b) * c = a * (b *c)\}|.$$
\end{mydef}

\begin{myprob}
    Fix a natural number $n$. What are the minimum and maximum possible values of the associativity index among all UFS-groupoids of cardinality $n$?

In other words, what are

$$\min_{(G, *) \in UFS_n} a((G, *)),$$
    $$\max_{(G, *) \in UFS_n} a((G, *)),$$

where $UFS_n$ is the set of all UFS-groupoids of cardinality $n$?

\end{myprob}

Obviously, for any groupoid we have $0 \le a((G, *)) \le |G|^3$. By Theorem \ref{assoc}, for UFS-groupoids the upper bound in these inequalities is strict. In addition to direct estimates, it is of interest to find explicit constructions of those groupoids that are maximally associative or maximally non-associative.

Unlike associativity, by Theorem \ref{right}, for any UFS-groupoid the right cancellation property holds for all pairs except possibly one. There are many examples where it holds for all pairs; in particular, all UFS-groupoids with periodic universal Fibonacci sequences have this property and are also right quasigroups. On the other hand, examination of the Cayley tables of specific UFS-groupoids that are right quasigroups, such as those given as examples in the section on periodic universal Fibonacci sequences, shows that the left cancellation property often fails dramatically. This leads to the following natural questions:

\begin{myprob}\label{exlc}
    Does there exist a finite (with more than one element) or infinite UFS-groupoid with the left cancellation property?
\end{myprob}

\begin{myprob}\label{exlq}
    Does there exist a finite (with more than one element) or infinite UFS-groupoid that is a left quasigroup?
\end{myprob}

\begin{myprob}\label{exq}
    Does there exist a finite (with more than one element) or infinite UFS-groupoid that is a quasigroup?
\end{myprob}

These three problems are successive strengthenings of one another; a positive answer to the conjecture in Problem \ref{exq} implies a positive answer to the conjecture in Problem \ref{exlq}, and a positive answer to the conjecture in Problem \ref{exlq} implies a positive answer to the conjecture in Problem \ref{exlc}. For finite UFS-groupoids, Problems \ref{exlc} and \ref{exlq} are equivalent.

For finite UFS-groupoids, we can consider a more general question about the cancellation index.

\begin{mydef}
    The \textbf{right/left cancellation indices} $rc((G, *))$ and $lc((G, *))$ of a finite groupoid $(G, *)$ are the numbers of pairs for which the right/left cancellation property holds. The \textbf{cancellation index} $c((G, *))$ is the number of pairs for which both the right and left cancellation properties hold.

In other words, $$lc((G, *)) = |\{(a, b) \in G^2| x_1 * a = x_2 * a = b\implies x_1 = x_2\}|,$$ 
    $$rc((G, *)) = |\{(a, b) \in G^2| a* x_1 = a * x_2 = b\implies x_1 = x_2\}|,$$ 
    $$c((G, *)) = |\{(a, b) \in G^2|x_1 * a = x_2 * a = b \implies x_1 = x_2,\ a* x_1 = a * x_2 = b\implies x_1 = x_2\}|.$$
    
\end{mydef}

\begin{myprob}\label{indexc}
    Fix a natural number $n$. What are the minimum and maximum possible values of the left cancellation index and the cancellation index among all UFS-groupoids of cardinality $n$?

In other words, what are

    $$\min_{(G, *) \in UFS_n} lc((G, *)),$$
    $$\max_{(G, *) \in UFS_n} lc((G, *)),$$
    $$\min_{(G, *) \in UFS_n} c((G, *)),$$
    $$\max_{(G, *) \in UFS_n} c((G, *)),$$

    where $UFS_n$ is the set of all UFS-groupoids of cardinality $n$?
\end{myprob}

Clearly, $0 \le c((G, *)) \le lc((G, *)) \le |G|^2$. Theorem \ref{right} then yields $rc((G, *)) = |G|^2 - 1$ or $rc((G, *)) = |G|^2$ for any finite UFS-groupoid. Problem \ref{indexc} generalizes the series of problems above to finite groupoids.

Theorems \ref{preperiodic} and \ref{perp} show that all finite UFS-groupoids with a strictly preperiodic universal Fibonacci sequence are uniquely obtained by changing the operation of a UFS-groupoid with a periodic universal Fibonacci sequence. The total number of finite UFS-groupoids with a strictly preperiodic universal Fibonacci sequence has been precisely calculated in Theorem \ref{fincount}. However, questions about the existence and number of UFS-groupoids with a fixed length of the preperiodic part remain open. As discussed in the section on finite UFS-groupoids with strictly preperiodic universal Fibonacci sequences in Chapter \ref{construct}, constructing a finite UFS-groupoid of cardinality $n$ with a strictly preperiodic universal Fibonacci sequence with preperiodic part of length $l$ is equivalent to finding a de Bruijn sequence $B(n, 2)$ in which the same element occurs in some two positions at distance $n^2 - l$ between the positions.
\begin{myprob}
    For which pairs $(n, l) \in \mathbb{N}^2$, $n > 1$, $1 \le l \le n^2 - 1$, does there exist a UFS-groupoid of cardinality $n$ with a strictly preperiodic universal Fibonacci sequence having preperiodic part of length $l$?
\end{myprob}

\begin{myprob}
    For $(n, l) \in \mathbb{N}^2$, $n > 1$, $1 \le l \le n^2 - 1$, how many UFS-groupoids of cardinality $n$ with a strictly preperiodic universal Fibonacci sequence having preperiodic part of length $l$ are there?
\end{myprob}

For the case $n = 2$, by Theorem \ref{fincount} there exists exactly one UFS-groupoid with a periodic universal Fibonacci sequence, the cycle of such a sequence being $(0, 0, 1, 1)$. By considering all possible modifications of the operation on a single pair of elements, it is easy to obtain all UFS-groupoids with a strictly preperiodic universal Fibonacci sequence. These exist with preperiodic part lengths $l = 1, 3$, but not for $l = 2$. However, by enumerating all UFS-groupoids for several larger cardinalities, one can find cases for all possible $l$. It is natural to conjecture that for every pair $(n, l) \in \mathbb{N}^2$, $n > 1$, $1 \le l \le n^2 - 1$, with the condition $(n, l) \ne (2, 2)$, there exists a UFS-groupoid of cardinality $n$ with a strictly preperiodic universal Fibonacci sequence having preperiodic part of length $l$.

It follows from Corollary \ref{cyclnodoublestr} that every UFS-groupoid is cyclic except for the case of a doubly infinite strictly preperiodic universal Fibonacci sequence. At the same time, by Theorem \ref{strinf} there exist infinitely generated UFS-groupoids with a doubly infinite strictly preperiodic universal Fibonacci sequence. However, it has not been proved that there exist finitely generated, and hence cyclic, UFS-groupoids of this type. Settling this question would complete the general classification of UFS-groupoids according to their cyclicity.

\begin{myprob}
    Do there exist cyclic UFS-groupoids with a doubly infinite strictly preperiodic universal Fibonacci sequence?
\end{myprob}

The paper describes the structure of subgroupoids for various UFS-groupoids. In particular, it has been shown how to construct different UFS-groupoids that contain other UFS-groupoids as subgroupoids. Nevertheless, various questions about the possible structure of UFS-groupoids from the viewpoint of the possible presence of subgroupoids remain open.

\begin{myprob}\label{subgrstrex}
    Do there exist UFS-groupoids with a singly infinite strictly preperiodic universal Fibonacci sequence that have no proper subgroupoids?
\end{myprob}

\begin{myprob}\label{infsubone}
    Do there exist UFS-groupoids with a doubly infinite aperiodic universal Fibonacci sequence that have no proper subgroupoids?
\end{myprob}

\begin{myprob}\label{infsubstrex}
    Do there exist UFS-groupoids with a doubly infinite strictly preperiodic universal Fibonacci sequence that have no proper subgroupoids?
\end{myprob}

\textbf{Acknowledgments:} The author thanks Aleksei Kanel-Belov for helpful discussions on the matter.

\bibliography{refs.bib}

\bibliographystyle{unsrt}

\end{document}